\theoremstyle{plain}
\newtheorem{theorem}{Theorem}[section]
\newtheorem{corollary}[theorem]{Corollary}
\newtheorem{proposition}[theorem]{Proposition}
\newtheorem{lemma}[theorem]{Lemma}
\newtheorem*{theorem*}{Theorem}
\newtheorem*{lemma*}{Lemma}
\newtheorem*{proposition*}{Proposition}
\newtheorem*{corollary*}{Corollary}
\theoremstyle{definition}
\newtheorem{definition}[theorem]{Definition}
\newtheorem{remark}[theorem]{Remark}
\newtheorem*{definition*}{Definition}
\newtheorem*{example*}{Example}
\newtheorem*{remark*}{Remark}
\newcommand{\QQQ}{\mathscr{Q}}
\newcommand{\TTT}{\mathcal{T}}
\newcommand{\LLL}{\mathscr{L}}
\newcommand{\AAA}{\mathscr{A}}
\DeclareMathOperator{\Tr}{Tr}
\DeclareMathOperator{\Ht}{Ht}
\DeclareMathOperator{\Cyl}{Cyl}
\title[Intrinsic Diophantine approximation on the unit circle]{Intrinsic Diophantine approximation on the unit circle and its Lagrange spectrum}
\author{Byungchul Cha}
\address{Muhlenberg College, 2400 Chew st, Allentown, PA, 18104, USA}
\email[B. Cha]{cha@muhlenberg.edu}
\author{Dong Han Kim}
\address{Department of Mathematics Education, Dongguk University - Seoul, 30 Pildong-ro 1-gil, Jung-gu, Seoul, 04620 Korea}
\email[D.H. Kim]{kim2010@dongguk.edu}
\thanks{Supported by the National Research Foundation of Korea (NRF-2018R1A2B6001624).}
\keywords{Lagrange spectrum, Romik's dynamical system, Diophantine approximation on manifold}
\subjclass{11J06, 11J70, 68R15}
\begin{document}

\begin{abstract}
    Let $\LLL(S^1)$ be the Lagrange spectrum arising from intrinsic Diophantine approximation on the unit circle $S^1$ by its rational points. 
    We give a complete description of the structure of $\LLL(S^1)$ below its smallest accumulation point.
    To this end, we use digit expansions of points on $S^1$, which were originally introduced by Romik in 2008 as an analogue of simple continued fraction of a real number. 
    We prove that the smallest accumulation point of $\LLL(S^1)$ is 2. 
    Also we characterize the points on $S^1$ whose Lagrange numbers are less than 2 in terms of Romik's digit expansions. 
    Our theorem is the analogue of the celebrated theorem of Markoff on badly approximable real numbers. 
\end{abstract}

\maketitle

\section{Introduction}\label{SecIntro}
 
\subsection{Motivation}
Call $(a, b)$ a \emph{Pythagorean pair} if $a$ and $b$ are nonnegative coprime integers such that $a^2 + b^2$ is a square. 
Suppose that we draw a half-line $\ell$ from the origin $O$ into the first quadrant of an affine coordinate plane
and we aim to make $\ell$ stay as far away as possible from all but finitely many Pythagorean pairs.
What is the greatest possible margin by which $\ell$ misses all but finitely many Pythagorean pairs?
What is the second greatest?

To formulate this question more precisely, we parametrize such a half-line by a point $P$ in the unit quarter circle $\QQQ$, defined by
\[
	\QQQ = \{ (x, y) \in \mathbb{R}^2 \mid x^2 + y^2 = 1, \text{ and } x, y\ge 0 \}.
\]
Write $\hat\delta (P; (a, b))$ for the shortest (Euclidean) distance between a Pythagorean pair $(a, b)$ and the half-line $\overrightarrow{OP}$.
Then we are interested in minimizing   
\[
	L(P) = \limsup_{(a, b)} \hat\delta (P; (a, b))^{-1},
\]
where Pythagorean pairs $(a, b)$ are ordered by their Euclidean norms $\sqrt{a^2 + b^2}$. 

Theorem~\ref{MainTheoremI} provides an answer to the questions posed in the first paragraph of this paper.
Note that, for each value of $L(P)$ in Table~\ref{tab:Top10}, there are (infinitely) many $P$ in $\QQQ$ which produce the same $L(P)$ and the table lists only one of them. For instance, both $(\frac{\sqrt3}2, \frac12)$ and $(\frac12, \frac{\sqrt3}2)$ give the same value $L(P) = \sqrt3$. 

\begin{theorem}\label{MainTheoremI}
The 10 smallest values of $L(P)$, together with corresponding $P$'s, are as in Table~\ref{tab:Top10}.
\end{theorem}

\begin{table}
\caption{Top 10 smallest values of $L(P)$ and corresponding $P$.}
	\[
	   \begin{array}{@{} ll @{}}
			\toprule
			L(P) & P \\
			\midrule
			 \sqrt2 = 1.414213562\dots 
			 &		\left(\frac1{\sqrt2}, \frac1{\sqrt2}\right)  \\
			 \sqrt3 = 1.732050808\dots 
			 &		\left(\frac12, \frac {\sqrt3}{2}\right)  \\
			\frac{\sqrt{34}}{3} = 1.943650632\dots 
			&
			\left(\frac{3}{34} \sqrt{34},\,\frac{5}{34} \sqrt{34}\right)
			\\
			\frac{3\sqrt{11}}{5} = 1.989974874\dots 
			&
			\left(\frac{9}{50} \sqrt{11} - \frac{2}{25},\,\frac{6}{25} \sqrt{11} + \frac{3}{50}\right)
			\\
			\frac{\sqrt{482}}{11} = 1.995863491\dots 
			&
			\left(\frac{11}{482} \sqrt{482},\,\frac{19}{482} \sqrt{482}\right)
			\\
			\frac{\sqrt{1154}}{17} = 1.998269147\dots 
			& 
			\left(\frac{7}{390} \sqrt{1154} - \frac{6}{65},\,\frac{3}{130} \sqrt{1154} + \frac{14}{195}\right)
			\\
			\frac{\sqrt{6722}}{41} = 1.999702536\dots 
			& 
			\left(\frac{41}{6722} \sqrt{6722},\,\frac{71}{6722} \sqrt{6722}\right)
			\\
			\frac{\sqrt{3363}}{29} = 1.999702713\dots 
			& 
			\left(\frac{9}{853} \sqrt{3363} - \frac{161}{1706},\,\frac{23}{1706} \sqrt{3363} + \frac{63}{853}\right)
			\\
			\frac{\sqrt{13922}}{59} = 1.999856358 \dots  
			& 
			\left(\frac{71}{14066} \sqrt{13922} - \frac{570}{7033},\,\frac{95}{14066} \sqrt{13922} + \frac{426}{7033}\right)
			\\
			\frac{\sqrt{16899}}{65} = 1.999940828\dots 
			&
			\left(\frac{33}{8450} \sqrt{16899} - \frac{28}{4225},\,\frac{28}{4225} \sqrt{16899} + \frac{33}{8450}\right)
			\\
			\bottomrule
		\end{array}
		\]
		\label{tab:Top10}
\end{table}
We note here that our main theorem (Theorem~\ref{MainTheoremDescDiscretePart}) gives an arbitrarily long list of ranking of $L(P)$, not just the top 10. 
Another implication of Theorem~\ref{MainTheoremDescDiscretePart} is given in the following theorem.
\begin{theorem}\label{MainTheoremII}
	The smallest accumulation point of the set
\[
	\{ L(P) \in \mathbb{R} \mid P \in \QQQ \}
\]
is 2.
\end{theorem}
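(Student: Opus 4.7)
The plan is to combine a direct construction showing that $2$ is approached from below in $\LLL(S^1)$ with a Markoff-type combinatorial classification of the set $\{P\in\QQQ : L(P)<2\}$, using Romik's digit expansion of points on the unit circle as the analogue of the simple continued fraction expansion.

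The first analytic step is to translate the extrinsic quantity $L(P)$ into a $\limsup$ of an explicit function of the iterates of Romik's shift map applied to $P$, in parallel with the classical formula $L(\alpha)=\limsup\bigl([a_{n+1};a_{n+2},\ldots]+[0;a_n,a_{n-1},\ldots]\bigr)$. Once this is in place, the condition $L(P)<2$ should translate into a uniform bound on the Romik digits of $P$, reducing the study of $\LLL(S^1)\cap[0,2)$ to a symbolic problem on an appropriate subshift over a finite alphabet.

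To verify that $2$ really is an accumulation point, I would exhibit an explicit sequence of points $P_m\in\QQQ$ with $L(P_m)<2$ and $L(P_m)\to 2$. The ten entries of Theorem~\ref{MainTheoremI} already suggest the right candidates: every tabulated value has the form $\sqrt{4-c/m^2}$ with $c\in\{1,2\}$, and taking $m\to\infty$ through the appropriate arithmetic family (the sequence $m=3,11,41,\ldots$, which should turn out to be the Romik analogue of Markoff denominators) one sees that $\sqrt{4-c/m^2}\nearrow 2$. Each $P_m$ can be taken to have a purely periodic Romik expansion so that $L(P_m)$ is computed from the largest eigenvalue of the $2\times 2$ matrix product associated to the period, and a Pell-type asymptotic gives the limit.

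The hardest step, and the crux of the theorem, is proving that no number strictly below $2$ is an accumulation point. This is a Markoff-style forbidden-block argument: I would show that $L(P)<2$ forces the Romik digit sequence of $P$ to lie in a combinatorially constrained subshift whose admissible bi-infinite sequences correspond to solutions of a Markoff-like Diophantine equation, each producing an isolated Lagrange value in $[0,2)$. The values then cluster only as the period length grows, and by the symbolic formula of the first step they cluster only at $2$. The principal obstacle is the inventory of forbidden blocks together with the verification that every such block genuinely pushes $L(P)$ above $2$, which is the technical heart of a Markoff-type theorem and here requires a careful estimate of $\hat\delta(P;(a,b))^{-1}$ in terms of the Romik shift orbit of $P$.
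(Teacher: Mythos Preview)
Your plan is correct and is essentially the paper's approach: the paper establishes a Perron-type formula for $L(P)$ in terms of Romik digits (Theorem~\ref{ThmPerron} and Corollary~\ref{CorThmPerron}), carries out exactly the forbidden-block analysis you describe (Propositions~\ref{ForbiddenWords}--\ref{Axiom}), reduces strongly admissible sequences to periodic words indexed by Christoffel words (Theorem~\ref{BombieriThm15}), and computes their Lagrange numbers as $\sqrt{4-1/x^2}$ or $\sqrt{4-2/y^2}$ via Cohn matrices and the Fricke identity (Theorems~\ref{ChristoffelL} and \ref{ThmTheorem1Reutenauer}); Theorem~\ref{MainTheoremII} is then an immediate corollary of the resulting description of $\LLL(S^1)\cap[0,2)$ in Theorem~\ref{MainTheoremDescDiscretePart}. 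Your outline identifies all the right ingredients and the correct logical order.
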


\subsection{General setting and description of main result}
We begin with a general set-up for intrinsic Diophantine approximation.
As will be explained later, our main theorem 
(Theorem~\ref{MainTheoremDescDiscretePart}) is the analogue of the celebrated theorem of Markoff in \cite{Mar79} and \cite{Mar80} on badly approximable numbers by rationals.
In the present paper, we will not attempt to give a comprehensive review on the vast body of existing literature regarding Markoff's theorem and subsequent developments. 
Instead, we refer interested readers to a book \cite{CF89} written by Cusick and Flahive and a survey paper \cite{Mal77} by Malyshev.

Let $(\mathcal{X}, d(\cdot, \cdot))$ be a complete metric space and let $\mathcal{Y}$ be a closed subset of $\mathcal{X}$.
Assume that $\mathcal{Y}$ is contained in the closure of a countable subset $\mathcal{Z}$ of $\mathcal{X}$.
In addition, we assume that there is a \emph{height function} $H: \mathcal{Z} \longrightarrow \mathbb{R}_{\ge0}$, whose inverse image of any finite set is finite.
Given the data
$(\mathcal{X}, \mathcal{Y}, \mathcal{Z}, H)$,
we define \emph{the Lagrange number} $L(P)$ of $P\in \mathcal{Y}-\mathcal{Z}$ to be
\[
	L(P) = \limsup_{Z\in \mathcal{Z}}\frac1{H(Z)d(P, Z)}
\]
and \emph{the Lagrange spectrum} to be
\[
	\LLL(\mathcal{Y}) = \{ L(P) \mid P \in \mathcal{Y}-\mathcal{Z}, \quad L(P) <\infty \}.
\]

A classical Lagrange spectrum studied by Markoff in the papers \cite{Mar79} and \cite{Mar80} is concerned with 
$(\mathcal{X}, \mathcal{Y}, \mathcal{Z}, H) = (\mathbb{R}, \mathbb{R},
\mathbb{Q}, H)$
with $H(p/q) = |q|^2$ for coprime integers $p$ and $q$.
Particularly relevant to the present paper is \emph{intrinsic Diophantine approximation on $n$-spheres},
$(\mathcal{X}, \mathcal{Y}, \mathcal{Z}, H) = (\mathbb{R}^{n+1}, S^n, 
S^n \cap \mathbb{Q}^{n+1}, H)$,
which is studied in \cite{KM15} and \cite{FKMS}.
Here, $S^n$ is a unit $n$-sphere in $\mathbb{R}^{n+1}$ centered at the origin 
and the height function $H$ is defined by
$H(\mathbf{p}/q) =|q|$ with primitive $\mathbf{p}\in\mathbb{Z}^{n+1}$, meaning that all coefficients of $\mathbf{p}$ have no common divisor $>1$.
Generally speaking, much less is known about the spectrum $\LLL(S^n)$ 
than the classical Lagrange spectrum of Markoff.
In \cite{KM15} Kleinbock and Merrill show that
$\LLL(S^n)$ is bounded away from 0
for every $n\ge1$.
Kopetzky \cite{Kop80} appears to be the first to determine the minimum of $\LLL(S^1)$. 
Later, Moshchevitin independently discovered the (same) minimum of $\LLL(S^1)$ in \cite{Mos16}.

A point  $P\in \mathcal{Y}-\mathcal{Z}$ is commonly called \emph{badly approximable} if $L(P) < \infty$.
In this paper, we will say that $P\in \mathcal{Y}-\mathcal{Z}$ is \emph{very badly approximable} if $L(P)$ is less than the smallest accumulation point of $\LLL(\mathcal{Y})$.  

In the classical case  
$(\mathcal{X}, \mathcal{Y}, \mathcal{Z}, H) = 
(\mathbb{R}, \mathbb{R}, \mathbb{Q}, H)$, it is well-known that the spectrum $\LLL(\mathbb{R})$ is a closed subset of $(0, \infty)$, 
$\LLL(\mathbb{R})$ contains a discrete part on the lower end and a (closed) interval $[c, \infty)$ on the higher end, and the smallest accumulation point of $\LLL(\mathbb{R})$ is 3.
A celebrated theorem of Markoff in \cite{Mar79} and \cite{Mar80} gives a complete description of all very badly approximable points for
$(\mathbb{R}, \mathbb{R}, \mathbb{Q}, H)$.
As a direct analogue of this, our main theorem (Theorem~\ref{MainTheoremDescDiscretePart}) 
gives a complete description of  very badly approximable points  for
$(\mathbb{R}^{2}, S^1,  S^1 \cap \mathbb{Q}^{2}, H)$.

After an initial version of the present paper was posted in the arxiv server, 
Moshchevitin brought to our attention a paper 
\cite{Kop85} by Kopetzky.
In this paper, Kopetzky connects prior results of A.~Schmidt in \cite{Sch75a} and \cite{Sch75b} with intrinsic Diophantine approximation of $S^1$ and deduces a statement which implies the same result as our main theorem.
See the first paragraph in \S\ref{SecLiterature} below.

One of the differences between Kopetzky's methods and ours is that we can explicitly obtain digit expansions of badly approximable points.
For example, our method can be used to find 
maximal gaps beyond its smallest accumulation point in $\LLL(S^1)$. 
Also, it is recently shown in \cite{CCGW} that tools developed here can be adapted to give a complete description of very badly approximable points in 
$(\mathbb{C}, S^1, S^1 \cap \mathbb{Q}(\sqrt{-3}), H)$.

To give a more detailed explanation of our main theorem, let $P=(\alpha, \beta)$ be a point in the unit quarter circle $\QQQ$ and choose a rational point $Z = (\frac ac, \frac bc)$ in $\QQQ$. 
We define
\begin{equation}\label{DefinitionDeltaPZ}
	\delta(P; Z) = c\cdot \sqrt{%
		\left( \alpha - \frac ac \right)^2 + 
		\left( \beta - \frac bc \right)^2
	}.
\end{equation}
Although $\delta(P; Z)$ is not the same as $\hat\delta(P; (a, b))$, they become arbitrarily close as $c = \sqrt{a^2 + b^2}$ becomes large
and therefore 
\begin{equation}\label{DefinitionDelta}
	L(P) = \liminf_{c\to\infty} \delta(P; Z)^{-1},
\end{equation}
which is the Lagrange number with respect to 
$(\mathbb{R}^{2}, S^1,  S^1 \cap \mathbb{Q}^{2}, H)$.
Note that we are using the usual Euclidean distance in $\mathbb{R}^2$ in the definition \eqref{DefinitionDeltaPZ}.

In the present paper, we will call $(x; y_1, y_2)$ a \emph{Markoff triple} if $(x; y_1, y_2)$ is a positive integer triple satisfying
\begin{equation}\label{MarkoffEquationIntro}
2x^2 + y_1^2 + y_2^2 = 4 x y_1 y_2.
\end{equation}

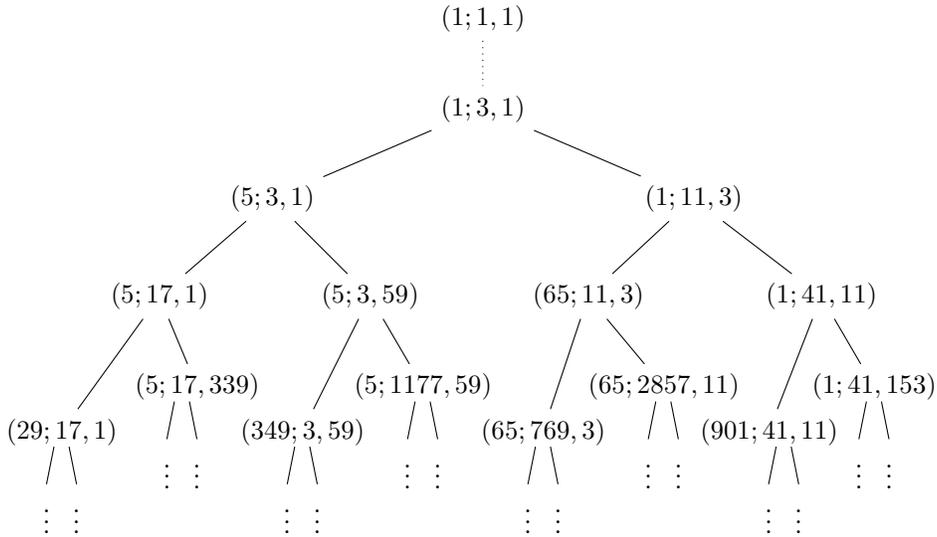
\begin{figure}
\begin{center}
\tikzset{triarrow/.pic={
    \draw (-0.1, 0) -- (-0.2, -0.5) node[below]{$\vdots$};
    \draw (0.1, 0) -- (0.2, -0.5) node[below]{$\vdots$};
  }
}
\begin{tikzpicture}[xscale=0.9]
\node (base) at (0, 5.2) {$(1; 1, 1)$};

\node (0) at (0, 4) {$( 1; 3, 1)$};

\node (f) at (-2.8, 2.8) {$(1; 3, 11)$};
\node (g) at (2.8, 2.8) {$(5; 3, 1)$};

\node (f0) at (-4.3, 1.5) {$(1; 41, 11)$};
\node (f1) at (-1.5, 1.5) {$(65; 3, 11)$};

\node (g0) at (1.4, 1.5) {$(5; 3, 59)$};
\node (g1) at (4.5, 1.5) {$(5; 17, 1)$};

\node (f01) at (-5.6, -.3) {$(1; 41, 153)$};
\pic at (-5.6, -0.5) {triarrow};
\node (f00) at (-3.8, .3) {$(901; 41, 11)$};
\pic at (-4, 0.1) {triarrow};

\node (f10) at (-2.4, -.3) {$(65; 2857, 11)$};
\pic at (-2.4, -0.5) {triarrow};
\node (f11) at (-.8, .3) {$(65; 3, 769)$};
\pic at (-0.8, 0.1) {triarrow};

\node (g00) at (.8, -.3) {$(349; 3, 59)$};
\pic at (0.8, -0.5) {triarrow};
\node (g01) at (2.4, .3) {$(5; 1177, 59 )$};
\pic at (2.4, 0.1) {triarrow};

\node (g10) at (3.8, -.3) {$(5; 17, 339)$};
\pic at (4, -0.5) {triarrow};
\node (g11) at (5.2, .3) {$(29; 17,1)$};
\pic at (5.2, 0.1) {triarrow};

\draw[dotted] (base) to (0);

\draw (0) to node[font=\footnotesize]{} (f);
\draw (0) to node[below right, font=\footnotesize]{} (g);

\draw (f) to node[font=\footnotesize]{} (f0); 
\draw (f) to node[right, font=\footnotesize]{} (f1); 

\draw (g) to node[font=\footnotesize]{} (g0); 
\draw (g) to node[right, font=\footnotesize]{} (g1); 

\draw (f0) to node[font=\footnotesize]{} (f00); 
\draw (f0) to node[right, font=\footnotesize]{} (f01); 

\draw (f1) to node[font=\footnotesize]{} (f10); 
\draw (f1) to node[right, font=\footnotesize]{} (f11); 

\draw (g0) to node[font=\footnotesize]{} (g00); 
\draw (g0) to node[right, font=\footnotesize]{} (g01); 

\draw (g1) to node[font=\footnotesize]{} (g10); 
\draw (g1) to node[right, font=\footnotesize]{} (g11); 
\end{tikzpicture}
\end{center}\caption{The Markoff tree for $2x^2 + y_1^2 + y_2^2 = 4 xy_1y_2$\label{Mtree}}
\end{figure}
The triple $(1; 1, 1)$ is said to be the \emph{singular} Markoff triple and all others are said to be \emph{nonsingular} Markoff triples. 
As in the classical theory of Markoff in \cite{Mar79} and \cite{Mar80}, all nonsingular Markoff triples form an infinite and complete binary tree whose root is $(1; 3, 1)$ (see Figure~\ref{Mtree}). 
A set of recursive rules for generating this tree is stated in \S\ref{SubSectionMarkoffTriples}.
Using these rules, we obtain two sequences 
\begin{equation}\label{DefMx}
    \mathcal{M}_x = \{ x \mid
(x; y_1, y_2) \text{ is a Markoff triple}
\} 
=\{ 1, 5, 29, 65, 169, 349, \dots\},
\end{equation}
and
\begin{equation}\label{DefMy}
\begin{aligned}
\mathcal{M}_y &= \{ \max\{y_1, y_2\}\mid 
(x; y_1, y_2)
\text{ is a Markoff triple}
\}\\
&=\{ 1, 3, 11, 17, 41, 59, \dots\}. 
\end{aligned}
\end{equation}
We are now ready to state our main theorem.
\begin{theorem}[Main Theorem. Also \cite{Kop85} and \cite{Sch75b}]\label{MainTheoremDescDiscretePart}
Let $\LLL(S^1)$ be the Lagrange spectrum with respect to $(\mathbb{R}^{2}, S^1,  S^1 \cap \mathbb{Q}^{2}, H)$.
Then
\[
\LLL(S^1) \cap [0, 2)
= \\
\left\{ \sqrt{ 4 - \frac{1}{x^2}}\, 
| \, x \in \mathcal{M}_x\right \} 
 \cup
\left \{ \sqrt{ 4 - \frac{2}{y^2}} \, | \, 
y \in \mathcal{M}_y \right\}.
\]
\end{theorem}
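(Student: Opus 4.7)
The plan is to proceed by analogy with Markoff's original proof for the classical spectrum, but using Romik's digit expansion in place of continued fractions. First I would develop the Romik map $T$ on $\QQQ$, whose three inverse branches partition $\QQQ$ into regions indexed by digits $d \in \{1,2,3\}$. This assigns to every irrational $P\in\QQQ$ a digit sequence $(d_n(P))_{n\ge 1}$, with rational points on $\QQQ$ corresponding to terminating sequences. The ``convergents'' of this expansion give a canonical sequence of rational approximations $Z_n \in S^1 \cap \mathbb{Q}^2$, and a direct height computation should express $\delta(P; Z_n)^{-1}$ in terms of the two-sided tails of the digit sequence around index $n$. One also needs to verify that all \emph{good} approximants in the sense of \eqref{DefinitionDelta} arise as convergents (or are dominated by them), so that no approximations are missed.

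Next I would establish an explicit formula of the shape
\[
L(P) = \limsup_{n\to\infty} F\bigl((d_{n+1}, d_{n+2}, \ldots),\,(d_{n}, d_{n-1}, \ldots)\bigr),
\]
where $F$ is a Romik-analogue of the classical sum of two continued fractions. Using this formula, the condition $L(P) < 2$ translates into a combinatorial restriction on the digit sequence: any occurrence of the ``middle'' digit (or of certain forbidden subwords involving it) already forces $F \ge 2$. Thus, after a finite transient, the digit sequence of a very badly approximable $P$ lies in a two-letter subalphabet, playing the role of ``partial quotients bounded by $2$'' in the classical Markoff theorem.

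I would then set up the correspondence between admissible bi-infinite words in this two-letter alphabet and nonsingular Markoff triples via \eqref{MarkoffEquationIntro}. The binary Markoff tree of Figure~\ref{Mtree} should be matched with a substitution structure on admissible words: each node $(x; y_1, y_2)$ yields canonical periodic Romik sequences, and a direct evaluation of $F$ on these periods should produce exactly the two families of values $\sqrt{4 - 1/x^2}$ and $\sqrt{4 - 2/y^2}$, corresponding respectively to the $x$-coordinate and to $\max(y_1, y_2)$. Completeness—that every admissible sequence with $L(P) < 2$ arises from some Markoff triple—would be proved by induction along the Markoff tree: any word not conforming to a subtree must contain a forbidden finite factor that forces $L(P) \ge 2$, paralleling the Cusick--Flahive treatment of Markoff's proof in \cite{CF89}.

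The main obstacle is the last step: matching the two-parameter family of spectral values to the tree recursion dictated by \eqref{MarkoffEquationIntro}, which is not the classical Markoff equation. Unlike the classical setting, where each Markoff number yields a single spectral value, here each Markoff triple must contribute \emph{two} distinct spectral values, so a careful bookkeeping is required to ensure that the two families $\sqrt{4-1/x^2}$ and $\sqrt{4-2/y^2}$ exhaust $\LLL(S^1) \cap [0,2)$ without gaps. I expect the bulk of the technical work to lie in finding the right substitution on $\{1,3\}$-words whose fixed points correspond to the Markoff tree and in carrying out the trace-style computation that produces the closed-form radicals $\sqrt{4-1/x^2}$ and $\sqrt{4-2/y^2}$.
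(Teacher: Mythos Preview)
Your overall architecture matches the paper's, but there is a concrete error in the combinatorial reduction step that would derail the argument. You write that the condition $L(P)<2$ forces the Romik digit sequence, after a transient, to lie in a \emph{two-letter subalphabet}, and later that you expect to work with ``substitutions on $\{1,3\}$-words.'' This is false already for the minimum of the spectrum: $P=(\tfrac1{\sqrt2},\tfrac1{\sqrt2})$ has expansion $[2,2,2,\dots]$ and $L(P)=\sqrt2$. In fact the forbidden blocks for admissible sequences are $33,\,11,\,232,\,212$ (see Proposition~\ref{ForbiddenWords}), so the digit $2$ is not eliminated at all; rather, the digits $1$ and $3$ can only occur in the blocks $31$ or $13$. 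The correct reduction is therefore to a \emph{three}-letter alphabet $\{a,b,a^{\vee}\}$ via the substitution $a=31$, $b=2$, $a^{\vee}=13$, together with a further ``$\vee$-orientation'' constraint (Propositions~\ref{UnitWord} and \ref{TwoBlock}) that makes the placement of $\vee$'s deterministic. Only after applying the map $\jmath$ of \S\ref{SecOrientation} does one land in the two-letter world $\{a,b\}$ where Christoffel words live. Missing this intermediate three-letter stage is not a cosmetic issue: the Perron-type formula and the comparison lemmas (Propositions~\ref{AdmissibleComparison}, \ref{AdmissibleComparison2}) all hinge on sections of the form $P^*2\mid 31Q$, which mix all three digits.

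Your bookkeeping concern about ``two distinct spectral values per Markoff triple'' is also miscalibrated. Each lower Christoffel word $w$ yields a \emph{single} value $L(B(w))=\sqrt{4-2/q(w)^2}$ (Theorem~\ref{ChristoffelL}); the dichotomy between $\sqrt{4-1/x^2}$ and $\sqrt{4-2/y^2}$ arises from the \emph{parity} of $w$, which determines whether $q(w)=\sqrt2\,m(w)$ or $q(w)=m(w)$ (Corollary~\ref{CorCohnForm} and Proposition~\ref{Lemma32Reu}), and hence whether $m(w)$ lands in $\mathcal{M}_x$ or $\mathcal{M}_y$ via Theorem~\ref{ThmTheorem1Reutenauer}. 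So the two families are indexed by Christoffel words of even and odd type, not by a double contribution from each triple.
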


\subsection{Outline of proof and organization of the paper}
As mentioned above,
Theorem~\ref{MainTheoremDescDiscretePart} is an unmistakable analogue of Markoff's classical theorem on $\LLL(\mathbb{R})$.
The theory of continued fraction plays a central role in Markoff's theory.
Therefore a natural starting point is to define digit expansions for points $P\in\QQQ$, which will be our counterpart to continued fraction expansions of real numbers.
This is done by Romik in \cite{Rom08}.
In essence, to each $P\in\QQQ$, one can attach an infinite sequence with values in $\{1,2,3\}$, which we will call 
\emph{a Romik digit expansion}, or simply, a \emph{digit expansion of} $P$.
This construction will be thoroughly reviewed in \S\ref{SubsecRomikDigitExpansion}.

Markoff's theorem in \cite{Mar79} and \cite{Mar80} can be rephrased by saying that a real number $\gamma$ is very badly approximable if any only if the continued fraction expansion of $\gamma$ is eventually periodic and its minimal period can be written as a \emph{Christoffel word} on an alphabet $\{a, b \}$ under the substitution
\[
a = 2\, 2 \quad \text{and} \quad
b = 1\, 1.
\]
See \S\ref{SecChristoffel} to recall definition and basic properties of Christoffel words.
This formulation of Markoff's theorem using Christoffel words can be traced back to Cohn \cite{Coh55}. 
More recently, Bombieri presented an elegant and self-contained exposition of this approach in \cite{Bom07}. Shortly after this, Reutenauer independently gave a short proof of this in \cite{Reu09} using known properties of Christoffel words and the theory of Sturmian words. Also, see \cite{BLRS} and \cite{Aig13}.

To state our results for Romik digit sequences, let $w$ be a finite word on a two-letter alphabet $\{a, b\}$ (see \S\ref{SecInfiniteSeqDoublyInfiniteSeq}).
We will say that $w$ is \emph{even} if $w$ contains an even number of $b$'s and \emph{odd} if $w$ contains an odd number of $b$'s.
For each $w$, we define a finite word $\jmath(w)$ on a three-letter alphabet $\{ a, b, a^{\vee} \}$ as follows.
Write $w = l_1 \cdots l_k$ with $l_j \in \{ a, b \}$.
For each $j = 1, 2, \dots, k$, we let $t = t(j)$ be the 
the number of occurrences of $b$ in the sequence $l_1, \dots, l_{j-1}$. 
Then we define $\jmath(w) = l'_1 l'_2 \cdots l_k'$ with $l_j' \in \{ a, b, a^{\vee} \}$ where 
\[
l'_j = 
\begin{cases}
a & \text{ if $l_j = a$ and $t(j)$ is even},\\
a^{\vee} & \text{ if $l_j = a$ and $t(j)$ is odd},\\
b & \text{ if } l_j = b.\\
\end{cases}
\]
For example, $\jmath(ababba) = aba^{\vee}bba^{\vee}$ and $\jmath(babbba) = ba^{\vee}bbba$.
From this definition, it follows that
$\jmath(w)$ does not contain any of the following words as a subword:
    \begin{equation}\label{ForbiddenWords0}
    ab^{2k}a^{\vee}, \quad
    a^{\vee} b^{2k}a,\quad
    ab^{2k+1}a,\quad
    a^{\vee}b^{2k+1}a^{\vee}
    \end{equation}
    for any $k\ge0$.

The key step in proving our main theorem is to characterize very badly approximable points by their periods in digit expansions in terms of Christoffel words.
More precisely, we will show in Theorem~\ref{BombieriThm15} that $P= (\alpha, \beta) \in\QQQ$ is very badly approximable if and only if 
	\begin{enumerate}[font=\upshape, label=(\roman*)]
	\item the digit expansion of $P$
	ends with either $2^{\infty}:= 222\cdots$ or
	$(31)^{\infty}:= 313131\cdots$, or
	\item  there exists a Christoffel word $w$ in $\{a, b\}$ such that 
the digit expansion of either $P$ or $P^{\vee}:= (\beta, \alpha)$ is eventually periodic and its minimal period is equal to
\begin{equation}\label{EqPeriodChristoffel}
\begin{cases}
\jmath(w) & \text{ if $w$ is even} \\
\jmath(w)(\jmath(w))^{\vee} & \text{ if $w$ is odd} \\
\end{cases}
\end{equation}
via the substitution
\begin{equation}\label{SubstitutionRule0}
a = 3\, 1, \quad
b = 2,  \quad
a^{\vee} = 1\, 3.
\end{equation}
Here, $(\jmath(w))^{\vee}$ is by definition the word in $\{ a, b, a^{\vee}\}$ obtained by attaching $\vee$ to each of the letters in $\jmath(w)$ subject to the rule
\[
    b^{\vee} = b \quad \text{ and } \quad (a^{\vee})^{\vee} = a.
\]
	\end{enumerate}

We present in \S\ref{SecRomikSequence} and \S\ref{SecCombinatorics} an adaptation of Bombieri's masterful exposition in \cite{Bom07}.
In \S\ref{SecRomikSequence}, we study combinatorial properties of a doubly infinite digit sequence that arises from a very badly approximable point. 
Based on these properties, we deduce in \S\ref{SecCombinatorics} that such a doubly infinite digit sequence can be always associated under the substitution rule \eqref{SubstitutionRule0} to a purely periodic doubly infinite word on $\{ a, b, a^{\vee}\}$ with its period given in the form \eqref{EqPeriodChristoffel}.

After we characterize periods of very badly approximable points, 
we review in \S\ref{SubSectionMarkoffTriples} some known structure of the Markoff tree and the Christoffel tree. 
We use the fact that these two trees are isomorphic (as graphs) to prove that all the periods of very badly approximable points come from Christoffel words, which is a critical step in proving Theorem~\ref{MainTheoremDescDiscretePart}.
\begin{table}
    \caption{The 10 smallest values of $L(P)$}
    \begin{tabular}{@{} llll @{}} 
    \toprule
	   $w$ &
	   \text{Minimal period} &
	   \text{Markoff number} &
	   $L(P)$
	   \\ 
   \midrule
   $b$ &
   2 &
   $y = 1$ &
  $ \sqrt2 = 1.414213562 \dots$
	   \\
	$ a $ &
	  31 &
	$  x = 1$ &
	$   \sqrt3 = 1.732050808 \dots $\\
	$    ab $ &
	    312 132  &
	$    y = 3$ &
	$   \frac{\sqrt{34}}{3} = 1.943650632 \dots$ \\
	$    abb$ &
	   3122 & 
	$    x = 5$ &
	$   \frac{3\sqrt{11}}{5} =1.989974874 \dots $ \\
	$    	aab$ &
	    31312 13132&
	$   y = 11$ &
	$   \frac{\sqrt{482}}{11} = 1.995863491 \dots $\\
	$    abbb$ &
	   31222 13222 &
	$    y = 17$ &
	$   \frac{\sqrt{1154}}{17} = 1.998269147 \dots $\\
	$    aaab$ &	
	   3131312 1313132 &
	$    y = 41$ &
	$   \frac{\sqrt{6722}}{41} = 1.999702536 \dots  $\\
	$    abbbb$ &
	   312222 &
	$    x = 29$ &
	$   \frac{\sqrt{3363}}{29} = 1.999702713 \dots $ \\
	$    ababb$ & 
	   3121322 1323122 &
	$    y = 59$ &
	$   \frac{\sqrt{13922}}{59} = 1.999856358 \dots $ \\
	$    aabab$ &
	   31312132 &
	$    x = 65$ &
	$   \frac{\sqrt{16899}}{65} = 1.999940828 \dots $ \\
      \bottomrule
   \end{tabular}
   \label{tab:10smallestL}
\end{table}

Table~\ref{tab:10smallestL} shows the 10 smallest Lagrange numbers and related quantities.
When we convert minimal periods in Table~\ref{tab:10smallestL} to the corresponding points $P\in \QQQ$, we obtain the list in Table~\ref{tab:Top10}.

\subsection{Related literature}\label{SecLiterature}
  In \cite{Sch75a}, A.~Schmidt develops a new approach to the problem of generalizing the theory of Diophantine approximation to complex numbers.
 He studies, among others, a certain version of \emph{a Markoff spectrum}, which is the set of normalized minimum values, 
 called \emph{$C$-minimum}, 
 of indefinite binary quadratic forms with real coefficients on a lattice in $\mathbb{R}^2$.
Then he proves that (its initial discrete part of) his Markoff spectrum is given by the same expression as in the right hand side in Theorem~\ref{MainTheoremDescDiscretePart}. 
See Chapter 5 in \cite{Sch75a} and \cite{Sch75b}.
In addition, a similar Markoff spectrum was studied by Vulakh, which was later called \emph{the Markoff spectrum on the sublattice of index 2} by Malyshev in \cite{Mal77}.
Vulakh proved that the initial discrete part of this spectrum is also the same as the right hand side in Theorem~\ref{MainTheoremDescDiscretePart}.
It is Kopetzky in \cite{Kop85} who made the connection explicit between $\LLL(S^1)$ and the Markoff spectrum of \emph{$C$-minimal forms} of A.~Schmidt.

According to \cite{Ser85a}, one can think of partial quotients of continued fractions as cutting sequences arising from geodesics on the modular surface $\mathbb{H}/\mathrm{SL}_2(\mathbb{Z})$. 
Similarly, one can interpret Romik's digit sequence of $P\in \QQQ$ as a cutting sequence on $\mathbb{H}/\Gamma(2)$.
This has already been observed in Theorem 5 in \cite{Rom08} by Romik.
When interpreted this way, the Romik digit expansion can be shown to be related to \emph{even continued fractions}. 
This point of view is emphasized in a recent work \cite{KLL20}.
For general discussion on even continued fractions and related cutting sequences, see \cite{SW16} and \cite{BM18}.

It is also possible to interpret Romik's digit sequences as cutting sequences of geodesics in the \emph{hyperboloid model} of the hyperbolic surface.
For instance, the three matrices $M_1, M_2, M_3$ defined in \eqref{DefinitionMs} are elements of the orthogonal group $O(2,1)$, which acts as an (orientation-preserving or orientation-reversing) isometry group of the hyperboloid model $x^2 + y^2 - z^2 = -1$. In this sense, the present paper is a natural continuation of our prior work in \cite{CNT} and \cite{CK18}.
A recent paper \cite{Pan19} by Panti is another example where the hyperboloid model is emphasized over the upper half plane model.

Finally, we mention a series of papers \cite{AA13}, \cite{AAR16}, and \cite{AR17} written by Abe, Aitchison, and Rittaud. Using geometric and combinatorial means, they obtained certain Lagrange spectra which are very much related to ours. 

\subsection*{Acknowledgements}
The authors are thankful to Brittany Gelb, whose careful reading improved an initial version of the paper.
They are also grateful to Nikolay Moshchevitin for informing them of Kopetzy's work \cite{Kop85}
and to Yann Bugeaud for his comments and French translation of the title and abstract.
Finally, they wish to thank the anonymous referee for providing many helpful comments.

\section{Romik's dynamical system and Perron's formula}\label{RomikWay}

\subsection{Romik's digit expansions}\label{SubsecRomikDigitExpansion}
As in the introduction, we let
\[
	\QQQ = \{ (x, y) \in \mathbb{R}^2 \mid x^2 + y^2 = 1, \text{ and } x, y\ge 0 \}.
\]
Following  \cite{Rom08},
we define a map $\mathcal{T}:\QQQ \longrightarrow \QQQ$
by
\begin{equation}\label{DefinitionT}
\TTT(x, y) = \left(
\frac{|2 - x - 2y|}{3 - 2x - 2y},
\frac{|2 - 2x - y|}{3 - 2x - 2y}
\right).
\end{equation}
To each $P = (x, y) \in \QQQ$, we assign a \emph{Romik digit} $d(P)$ to be
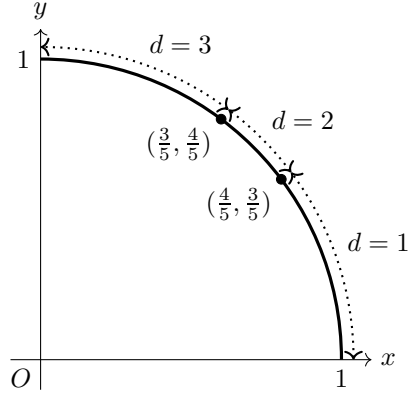
\begin{figure}
	\begin{center}
		\begin{tikzpicture}[scale=0.8]
			\node[below left] at (0, 0) {$O$};
			\draw[->] (-0.5, 0) -- (5.5, 0) node[right] {$x$};
			\draw[->] (0, -0.5) -- (0, 5.5) node[above] {$y$};
			\draw[very thick] (5, 0) node [below] {$1$}
			arc (0:90:5) node[left] {$1$};

			\draw[fill] (3, 4) circle (0.08) node[below left] {$(\frac35, \frac45)$};
			\draw[fill] (4, 3) circle (0.08) node[below left] {$(\frac45, \frac35)$};
			\draw[<->, dotted, thick] (5.2, 0) 
			arc (0: 36.8:5.2) node[midway, above right] {$d=1$};
			\draw[<->, dotted, thick] ([shift=(36.8:5.2)]0, 0) 
			arc (36.8: 53.1: 5.2) node[midway, above right] {$d=2$};
			\draw[<->, dotted, thick] ([shift=(53.1:5.2)]0, 0) 
			arc (53.1: 90: 5.2) node[midway, above right] {$d=3$} ;
		\end{tikzpicture}
		\caption{Romik digit of $P$ \label{DigitPicture}}
	\end{center}
\end{figure}
\begin{equation}\label{DefinitionDigit}
d(P) = \begin{cases}
1 & \text{ if } \frac45 \le x \le 1, \\
2 & \text{ if } \frac35 \le x \le \frac45, \\
3 & \text{ if } 0 \le x \le \frac35, \\
\end{cases}
\end{equation}
(see Figure~\ref{DigitPicture})
and the \emph{$j$-th digit of $P$} is subsequently defined to be
$d_j = d(\TTT^{j-1}(P))$ for $j=1, 2, \dots.$ 
The resulting sequence $\{ d_j \}_{j=1}^{\infty}$ will be called \emph{the Romik digit expansion} of $P$ and we write
\begin{equation}\label{RomikDigitExpansion}
	P = (x, y) = [d_1, d_2, \dots ]_{\QQQ}.
\end{equation}
The map $\TTT$ defines a dynamical system $(\QQQ, \mathcal{T})$, which we call \emph{Romik's system}, and $\mathcal{T}$ shifts each digit sequence to the left, so that
\[
	\TTT^k(P) =  (\overbrace{\TTT \circ \cdots \circ \TTT}^{k \text{ times}}) (P)
	= [d_{k+1}, d_{k+2}, \dots ]_{\QQQ}.
\]
For instance, 
\[
	(\tfrac1{\sqrt2}, \tfrac1{\sqrt2})= [2, 2, \dots]_{\QQQ} \ \text{ and } \ (\tfrac12, \tfrac{\sqrt3}2) = [3, 1, 3, 1, \dots]_{\QQQ}.
\]
We denote by $1^{\infty}$ and $3^{\infty}$ the infinite successions of 1's and 3's. 
Since the points $(1, 0)$ and $(0, 1)$ are fixed by $\TTT$, we have
\[
	(1, 0) = [1, 1 ,1, \dots]_{\QQQ}=[1^{\infty}]_{\QQQ} 
	\ \text{ and } \  (0, 1) = [3, 3 ,3, \dots]_{\QQQ}= [3^{\infty}]_{\QQQ}.
\]

We allow each of the two boundary points $(\frac45, \frac35)$ and $(\frac35, \frac45)$ to have \emph{two} valid digits $\{ 1,2\}$ and $\{2,3\}$ respectively.
As a result, all rational points on $\QQQ$ except for $(1, 0)$ and $(0, 1)$ will have two valid digit expansions of the forms    
\[
[\dots, 2, 1^{\infty}]_{\QQQ} \ \text{ and } \ [\dots, 3, 1^{\infty}]_{\QQQ},
\]
or
\[
[\dots, 1, 3^{\infty}]_{\QQQ} \ \text{ and } \ [\dots, 2, 3^{\infty}]_{\QQQ}.
\]
For example,
\[
	(\tfrac35, \tfrac45) = [2, 1^{\infty}]_{\QQQ} \ \text{ and } \ 
	 [3, 1^{\infty}]_{\QQQ},
\]
and
\[
	(\tfrac{12}{13}, \tfrac{5}{13}) = [1, 1, 3^{\infty}]_{\QQQ} \ \text{ and } \ 
	 [1, 2, 3^{\infty}]_{\QQQ}.
\]

The map $\TTT$ originates from an old theorem on trees of \emph{primitive Pythagorean triples}, that is, triples $(a, b, c)$ of (pairwise) coprime positive integers $a, b, c$ with $a^2 + b^2 = c^2$, which is often attributed to 
Berggren \cite{Ber34} and Barning \cite{Bar63}.
The theorem says that, if $(a, b, c)$ is a primitive Pythagorean triple, there exists a unique sequence $[d_1, \dots, d_k]$ of digits $d_j \in \{ 1, 2, 3 \}$ such that
\[
\begin{pmatrix}
a \\ b \\ c
\end{pmatrix}
=
M_{d_1}
\cdots
M_{d_k}
\begin{pmatrix}
3 \\ 4 \\ 5
\end{pmatrix}
\text{ or }
\begin{pmatrix}
a \\ b \\ c
\end{pmatrix}
=
M_{d_1}
\cdots
M_{d_k}
\begin{pmatrix}
4 \\ 3 \\ 5
\end{pmatrix}
\]
where $M_1, M_2, M_3$ are defined to be
\begin{equation}\label{DefinitionMs}
M_1=
\begin{pmatrix}
-1 & 2 & 2 \\
-2 & 1 & 2 \\
-2 & 2 & 3 \\
\end{pmatrix},
\quad
M_2=
\begin{pmatrix}
1 & 2 & 2 \\
2 & 1 & 2 \\
2 & 2 & 3 \\
\end{pmatrix},
\quad
M_3=
\begin{pmatrix}
1 & -2 & 2 \\
2 & -1 & 2 \\
2 & -2 & 3 \\
\end{pmatrix}.
\end{equation}
As a result of this theorem,  the set of all primitive Pythagorean triples forms directed ternary trees 
(see Figure~\ref{PythagoreanTree})
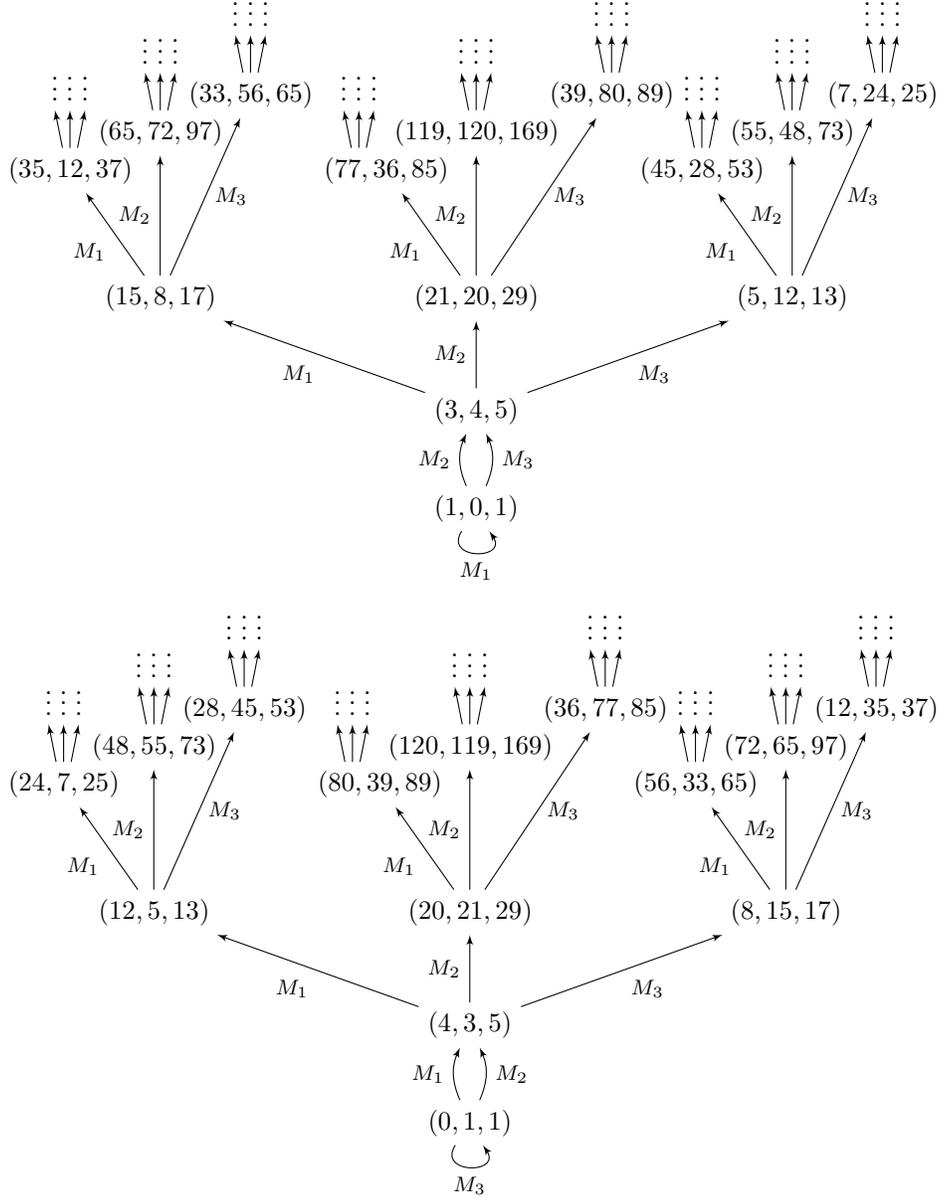
\begin{figure}
\begin{center}
\tikzset{triarrow/.pic={
    \draw (0, 0) -- (0, 0.5) node[above]{$\vdots$};
    \draw (-0.1, 0) -- (-0.2, 0.5) node[above]{$\vdots$};
    \draw (0.1, 0) -- (0.2, 0.5) node[above]{$\vdots$};
  }
}
\begin{tikzpicture}[->, >=latex', auto, xscale=1]

\node (subbase) at (0, -1.5) {$(1, 0, 1)$};
\node (base) at (0, -0.2) {$(3, 4, 5)$};

\node (1f1) at (-3.5, 1.3) {$(15, 8, 17)$};
\node (1f2) at (0, 1.3) {$(21, 20, 29)$};
\node (1f3) at (3.5, 1.3) {$(5, 12, 13)$};

\pic at (-4.5, 3.3) {triarrow};
\pic at (-3.5, 3.8) {triarrow};
\pic at (-2.5, 4.3) {triarrow};

\node (2f1) at (-4.5, 3) {$(35, 12, 37)$};
\node (2f2) at (-3.5, 3.5) {$(65, 72, 97)$};
\node (2f3) at (-2.5, 4) {$(33, 56, 65)$};

\node (2f4) at (-1, 3) {$(77, 36, 85)$};
\node (2f5) at (0, 3.5) {$(119, 120, 169)$};
\node (2f6) at (1.5, 4) {$(39, 80, 89)$};

\pic at (-1.3, 3.3) {triarrow};
\pic at (0, 3.8) {triarrow};
\pic at (1.5, 4.3) {triarrow};

\node (2f7) at (2.5, 3) {$(45, 28, 53)$};
\node (2f8) at (3.5, 3.5) {$(55, 48, 73)$};
\node (2f9) at (4.5, 4) {$(7, 24, 25)$};

\pic at (2.5, 3.3) {triarrow};
\pic at (3.5, 3.8) {triarrow};
\pic at (4.5, 4.3) {triarrow};

\draw (subbase) to[bend left=20] node[left, font=\footnotesize]{${M_2}$} (base);
\draw (subbase) to[bend right=20] node[right, font=\footnotesize]{${M_3}$} (base);
\draw (subbase) to [out=-120,in=-60,loop,looseness=4] 
	node[below, font=\footnotesize]{${M_1}$} (subbase);

\draw (base) to node[font=\footnotesize]{${M_1}$} (1f1);
\draw (base) to node[font=\footnotesize]{${M_2}$} (1f2);
\draw (base) to node[below right, font=\footnotesize]{${M_3}$} (1f3);

\draw (1f1) to node[font=\footnotesize]{${M_1}$} (2f1); 
\draw (1f1) to node[font=\footnotesize]{${M_2}$} (2f2); 
\draw (1f1) to node[right, font=\footnotesize]{${M_3}$} (2f3); 

\draw (1f2) to node[font=\footnotesize]{${M_1}$} (2f4); 
\draw (1f2) to node[font=\footnotesize]{${M_2}$} (2f5); 
\draw (1f2) to node[right, font=\footnotesize]{${M_3}$} (2f6); 

\draw (1f3) to node[font=\footnotesize]{${M_1}$} (2f7); 
\draw (1f3) to node[font=\footnotesize]{${M_2}$} (2f8); 
\draw (1f3) to node[right, font=\footnotesize]{${M_3}$} (2f9); 
\end{tikzpicture}
\begin{tikzpicture}[->, >=latex', auto, xscale=1]
\node (subbase) at (0, -1.5) {$(0, 1, 1)$};
\node (base) at (0, -0.2) {$(4, 3, 5)$};

\node (1f1) at (-3.5, 1.3) {$(12, 5, 13)$};
\node (1f2) at (0, 1.3) {$(20, 21, 29)$};
\node (1f3) at (3.5, 1.3) {$(8, 15, 17)$};

\node (2f1) at (-4.5, 3) {$(24, 7, 25)$};
\node (2f2) at (-3.5, 3.5) {$(48, 55, 73)$};
\node (2f3) at (-2.5, 4) {$(28, 45, 53)$};

\pic at (-4.5, 3.3) {triarrow};
\pic at (-3.5, 3.8) {triarrow};
\pic at (-2.5, 4.3) {triarrow};

\node (2f4) at (-1, 3) {$(80, 39, 89)$};
\node (2f5) at (0, 3.5) {$(120, 119, 169)$};
\node (2f6) at (1.5, 4) {$(36, 77, 85)$};

\pic at (-1.3, 3.3) {triarrow};
\pic at (0, 3.8) {triarrow};
\pic at (1.5, 4.3) {triarrow};

\node (2f7) at (2.5, 3) {$(56, 33, 65)$};
\node (2f8) at (3.5, 3.5) {$(72, 65, 97)$};
\node (2f9) at (4.5, 4) {$(12, 35, 37)$};

\pic at (2.5, 3.3) {triarrow};
\pic at (3.5, 3.8) {triarrow};
\pic at (4.5, 4.3) {triarrow};

\draw (subbase) to[bend left=20] node[left, font=\footnotesize]{${M_1}$} (base);
\draw (subbase) to[bend right=20] node[right, font=\footnotesize]{${M_2}$} (base);
\draw (subbase) to [out=-120,in=-60,loop,looseness=4] 
	node[below, font=\footnotesize]{${M_3}$} (subbase);

\draw (base) to node[font=\footnotesize]{${M_1}$} (1f1);
\draw (base) to node[font=\footnotesize]{${M_2}$} (1f2);
\draw (base) to node[below right, font=\footnotesize]{${M_3}$} (1f3);

\draw (1f1) to node[font=\footnotesize]{${M_1}$} (2f1); 
\draw (1f1) to node[font=\footnotesize]{${M_2}$} (2f2); 
\draw (1f1) to node[right, font=\footnotesize]{${M_3}$} (2f3); 

\draw (1f2) to node[font=\footnotesize]{${M_1}$} (2f4); 
\draw (1f2) to node[font=\footnotesize]{${M_2}$} (2f5); 
\draw (1f2) to node[right, font=\footnotesize]{${M_3}$} (2f6); 

\draw (1f3) to node[font=\footnotesize]{${M_1}$} (2f7); 
\draw (1f3) to node[font=\footnotesize]{${M_2}$} (2f8); 
\draw (1f3) to node[right, font=\footnotesize]{${M_3}$} (2f9); 
\end{tikzpicture}
\end{center}
\caption{Trees of Pythagorean triples\label{PythagoreanTree}} 
\end{figure}
with an edge from $(a', b', c')$ to $(a, b, c)$ whenever
\begin{equation}\label{Edge}
\begin{pmatrix} a \\ b \\ c \end{pmatrix}
	=
	M_d
\begin{pmatrix} a' \\ b' \\ c' \end{pmatrix}
\end{equation}
for $d\in\{1, 2, 3 \}$.
We find it convenient to add $(1, 0, 1)$ and $(0, 1, 1)$ to the trees, even though they are not primitive Pythagorean triples.

A simple calculation shows that $(a, b, c)$ and $(a', b', c')$ satisfy \eqref{Edge} if and only if $\TTT(\frac ac, \frac bc)=(\frac{a'}{c'}, \frac{b'}{c'})$ and, when this happens, we have $d=d(\frac{a}{c}, \frac{b}{c})$ (see Proposition~\ref{DigitsAndMatrixMultiplication} below).
As a consequence, we can ``read off'' the digit expansion of $(\frac ac, \frac bc)$ by locating $(a, b, c)$ in a tree in Figure~\ref{PythagoreanTree}.
For example, suppose that $(a, b, c)$ is connected to $(3, 4, 5)$ in the tree via $M_{d_1}, \dots, M_{d_k}$: 
\begin{center}
\begin{tikzpicture}[->, >=latex', auto, xscale=1.2]
	\node [] (A) {$(a, b, c)$};
	\node [right=of A] (B) {$\cdots$};
	\node [right=of B] (C) {$(3, 4, 5)$};
	\node [right=of C] (D) {$(1, 0, 1)$};
\draw (B) to node[above, font=\footnotesize]{${M_{d_1}}$} (A);
\draw (C) to node[above, font=\footnotesize]{${M_{d_k}}$} (B);
\draw (D) to [bend right=20] node[above, font=\footnotesize]{${M_{2}}$} (C);
\draw (D) to [bend left=20] node[below, font=\footnotesize]{${M_{3}}$} (C);
\draw (D) to [out=20,in=-20,loop,looseness=4] 
	node[right, font=\footnotesize]{${M_1}$} (D);
\end{tikzpicture}
\end{center}
Then the digit expansion of $(\frac ac, \frac bc)$ is obtained by tracing $(a, b, c)$ back to $(3, 4, 5)$ and then to $(1, 0, 1)$:
\[
	(\tfrac ac, \tfrac bc) = 
	[d_1, \dots, d_k, 2, 1^{\infty}]_{\QQQ}
	\text{ and }
	[d_1, \dots, d_k, 3, 1^{\infty}]_{\QQQ}.
\]

The following proposition explicitly relates $\TTT$ and actions of $M_1, M_2, M_3$, which will be used frequently later.
\begin{proposition}\label{DigitsAndMatrixMultiplication}
	Let
	\[
		P = (\alpha, \beta) = [d_1, d_2, \dots, d_k, \dots]_{\QQQ},
	\]
	and $P' = \TTT^k(P) = (\alpha', \beta')$ for some $k\ge 1$.
	Then, the two vectors
	\[
		M_{d_k}^{-1} \cdots M_{d_1}^{-1}
	\begin{pmatrix} 
		\alpha \\ \beta \\ 1
	\end{pmatrix}
	\text{ and } 
	\begin{pmatrix} 
		\alpha' \\ \beta' \\ 1
	\end{pmatrix}
	\]
    are non-zero (positive) scalar multiples of one another.
\end{proposition}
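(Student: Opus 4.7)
The plan is to prove the statement by induction on $k$, with the substance concentrated in the base case $k=1$. For $k=1$, I would verify directly that for each digit value $d\in\{1,2,3\}$ and each $P=(\alpha,\beta)\in\QQQ$ with $d(P)=d$, the vector $M_d^{-1}(\alpha,\beta,1)^T$ equals $\tfrac{1}{3-2\alpha-2\beta}(\alpha',\beta',1)^T$, where $(\alpha',\beta')=\TTT(\alpha,\beta)$. Since $\alpha+\beta\le\sqrt{2}<3/2$ on $\QQQ$, the denominator $3-2\alpha-2\beta$ is strictly positive, so the scalar multiple is positive and the third entry is in particular nonzero.

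The computation is straightforward because each $M_d$ lies in $O(2,1)$ for the form $x^2+y^2-z^2$, whence $M_d^{-1}=JM_d^TJ$ with $J=\mathrm{diag}(1,1,-1)$, giving each $M_d^{-1}$ explicitly. A direct multiplication shows that the three coordinates of $M_d^{-1}(\alpha,\beta,1)^T$ are, in each case, $\pm(2-\alpha-2\beta)$, $\pm(2-2\alpha-\beta)$, and $3-2\alpha-2\beta$ in some order depending on $d$. For the inductive step, set $P'':=\TTT^{k-1}(P)=[d_k,d_{k+1},\ldots]_{\QQQ}$. By the inductive hypothesis,
\[
M_{d_{k-1}}^{-1}\cdots M_{d_1}^{-1}(\alpha,\beta,1)^T = \lambda\,(P'',1)^T
\]
for some $\lambda>0$. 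Applying $M_{d_k}^{-1}$ to both sides and then invoking the $k=1$ case for $P''$ (whose first Romik digit is exactly $d_k$) finishes the induction.

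The main obstacle is the sign bookkeeping needed to reconcile the absolute values $|2-\alpha-2\beta|$ and $|2-2\alpha-\beta|$ appearing in the definition \eqref{DefinitionT} of $\TTT$ with the signed linear forms that come out of $M_d^{-1}(\alpha,\beta,1)^T$. To handle this, I would first tabulate, on each of the three arcs defining the digits, the signs of $2-\alpha-2\beta$ and $2-2\alpha-\beta$: on the arc $d=1$ the first is nonnegative and the second is nonpositive; on $d=2$ both are nonpositive; on $d=3$ the first is nonpositive and the second is nonnegative. One then checks, case by case in $d$, that these signs align precisely with the signs in the two nontrivial entries of $M_d^{-1}(\alpha,\beta,1)^T$, so that after dividing by the positive third entry $3-2\alpha-2\beta$ one recovers $\TTT(\alpha,\beta)$ exactly. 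The ambiguity at the two boundary points $(4/5,3/5)$ and $(3/5,4/5)$, where the digit is not unique, causes no trouble because the relevant linear expression vanishes there, so either choice of $d$ gives consistent images.
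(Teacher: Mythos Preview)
Your proposal is correct and follows essentially the same approach as the paper: reduce to $k=1$ by induction and then verify directly, for each digit $d$, that $M_d^{-1}(\alpha,\beta,1)^T$ is $3-2\alpha-2\beta$ times $(\alpha',\beta',1)^T$. The paper spells out only the case $d=1$ and leaves $d=2,3$ as ``similarly straightforward'', whereas you supply the $O(2,1)$ inverse formula and the sign analysis for all three arcs; one small slip is that the three entries are not ``in some order depending on $d$'' but always in the same order, with only the signs of the first two depending on $d$.
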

\begin{proof}
    It suffices to show this for $k=1$, as the general case will then follow from an easy induction. 
	Suppose that $d = d(P) = 1$.
	Then,
	\[
		M_1^{-1}
	\begin{pmatrix} \alpha \\ \beta \\ 1 \end{pmatrix}
		= 
	\begin{pmatrix}
		-1 & -2 & 2 \\
	2 & 1 & -2 \\
	-2 & -2 & 3
	\end{pmatrix}
	\begin{pmatrix} \alpha \\ \beta \\ 1 \end{pmatrix}
	=
	(-2\alpha - 2\beta + 3)
	\begin{pmatrix}
	\alpha' \\ \beta' \\ 1
	\end{pmatrix},
	\]
	because 
	\[
		 (\alpha', \beta') = \TTT(\alpha, \beta) =
		 \left(
			 \frac{-\alpha- 2\beta+2}{-2\alpha - 2\beta+3},
			 \frac{2\alpha +\beta-2}{-2\alpha - 2\beta+3}
		 \right).
	 \]
	 This proves the case $d(P)=1$. The cases for $d(P)=2$ and $d(P)=3$ are similarly straightforward.
\end{proof}
\subsection{Cylinder sets and their boundary points}\label{SecCylinder}
Fix a sequence $\{d_1, \dots, d_k\}$ of $k$ Romik digits $d_j \in \{1, 2, 3 \}$.
Define a \emph{cylinder set} $\Cyl(d_1, \dots, d_k)$ \emph{of length} $k$ by 
\begin{equation}\label{CylindersetDefinition}
	\Cyl(d_1, \dots, d_k) = \{ P \in \QQQ: d_j = d(\TTT^{j-1}(P)) \text{ for } j = 1, \dots, k \}.
\end{equation}
(It is convenient to think of $\QQQ$ to be the cylinder set of length 0.)
The above definition needs to be interpreted carefully when $P$ has more than one valid digit expansion, that is, when $P$ is rational.
In that case, we say that $P \in \Cyl(d_1, \dots, d_k)$ whenever one of the two digit expansions satisfies the condition.
Topologically, $\Cyl(d_1, \dots, d_k)$ is a closed subarc of $\QQQ$, that is, a connected, closed subset of $\QQQ$. Figure~\ref{FigCylinderPicture} shows all cylinder sets of length 2.
\begin{figure}
	\begin{center}
		\begin{tikzpicture}[scale=0.5]
			\node[below left] at (0, 0) {$O$};
			\draw[->] (-1, 0) -- (11, 0) node[right] {$x$};
			\draw[->] (0, -1) -- (0, 11) node[above] {$y$};
			\draw[very thick] (10, 0) node [below] {$1$}
			arc (0:90:10) node[left] {$1$};

			\draw[fill] (6, 8) circle (0.1) node[below left] {$(\frac35, \frac45)$};
			\draw[fill] (8, 6) circle (0.1) node[below left] {$(\frac45, \frac35)$};

\draw[<->, dotted, thick] ([shift=(0.0:10.4)]0, 0) arc (0.0: 22.61986:10.4)
node[midway,  right] {\footnotesize{$\Cyl(1, 1)$}};
\draw[<->, dotted, thick] ([shift=(22.61986:10.4)]0, 0) arc (22.61986: 28.07249:10.4)
node[midway,  right] {\footnotesize{$\Cyl(1, 2)$}};
\draw[<->, dotted, thick] ([shift=(28.07249:10.4)]0, 0) arc (28.07249: 36.8699:10.4)
node[midway, above right] {\footnotesize{$\Cyl(1, 3)$}};
\draw[<->, dotted, thick] ([shift=(36.8699:10.4)]0, 0) arc (36.8699: 43.60282:10.4)
node[midway, above right] {\footnotesize{$\Cyl(2, 3)$}};
\draw[<->, dotted, thick] ([shift=(43.60282:10.4)]0, 0) arc (43.60282: 46.39718:10.4)
node[midway, above right] {\footnotesize{$\Cyl(2, 2)$}};
\draw[<->, dotted, thick] ([shift=(46.39718:10.4)]0, 0) arc (46.39718: 53.1301:10.4)
node[midway, above right] {\footnotesize{$\Cyl(2, 1)$}};
\draw[<->, dotted, thick] ([shift=(53.1301:10.4)]0, 0) arc (53.1301: 61.92751:10.4)
node[midway, above right] {\footnotesize{$\Cyl(3, 1)$}};
\draw[<->, dotted, thick] ([shift=(61.92751:10.4)]0, 0) arc (61.92751: 67.38014:10.4)
node[midway, above right] {\footnotesize{$\Cyl(3, 2)$}};
\draw[<->, dotted, thick] ([shift=(67.38014:10.4)]0, 0) arc (67.38014: 90:10.4)
node[midway, above ] {\footnotesize{$\Cyl(3, 3)$}};
	\end{tikzpicture}
	\caption{Cylinder sets of length 2\label{FigCylinderPicture}}
	\end{center}
\end{figure}
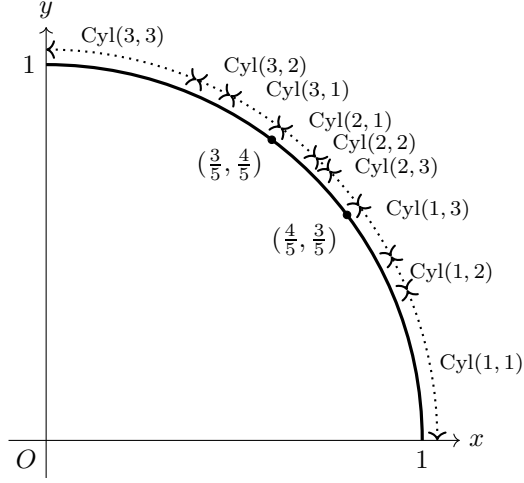

Denote by
$\mathbf{u}^{(1, 0)}$ and
$\mathbf{u}^{(0, 1)}$
the vectors
\begin{equation}\label{EqDefU}
	\mathbf{u}^{(1, 0)} =
	\begin{pmatrix} 1 \\ 0 \\ 1 \\ \end{pmatrix}
		\quad
		\text{ and }
		\quad
	\mathbf{u}^{(0, 1)} =
	\begin{pmatrix} 0 \\ 1 \\ 1 \\ \end{pmatrix}.
\end{equation}
Then the boundary points of $\Cyl(d_1, \dots, d_k)$ can be readily computed from the sequence $\{ d_1, \dots, d_k\}$ as follows.
Define
$\mathbf{z}^{(1, 0)}$
and 
$\mathbf{z}^{(0, 1)}$
(both of these depend on the sequence $\{ d_1, \dots, d_k\}$, which we suppress to lighten notations)
to be
\begin{equation}\label{V10}
	\mathbf{z}^{(1, 0)}
	=
\begin{pmatrix} 
a^{(1, 0)} \\
b^{(1, 0)} \\
c^{(1, 0)} \\
\end{pmatrix}
=
M_{d_1} M_{d_2} \cdots M_{d_k}
	\mathbf{u}^{(1, 0)} 
\end{equation}
and
\begin{equation}\label{V01}
	\mathbf{z}^{(0, 1)} =
\begin{pmatrix} 
a^{(0, 1)} \\
b^{(0, 1)} \\
c^{(0, 1)} \\
\end{pmatrix}
=
M_{d_1} M_{d_2} \cdots M_{d_k}
	\mathbf{u}^{(0, 1)} 
\end{equation}
Then the two points
\begin{equation}\label{DefinitionZk}
Z^{(1, 0)} =
	\left(
		\frac{a^{(1, 0)}}{c^{(1, 0)}}, 
		\frac{b^{(1, 0)}}{c^{(1, 0)}}
\right) 
\ \text{ and } \ 
Z^{(0, 1)} =
	\left(
		\frac{a^{(0, 1)}}{c^{(0, 1)}}, 
		\frac{b^{(0, 1)}}{c^{(0, 1)}}
\right) 
\end{equation}
are the boundary points of $\Cyl(d_1, \dots, d_k)$ and their digit expansions are given as
\begin{equation}\label{EqZboundary}
	Z^{(1, 0)}
	= [d_1, d_2, \dots, d_k, 1^{\infty} ]_{\QQQ}
	\ \text{ and } \
	Z^{(0, 1)}
	= [d_1, d_2, \dots, d_k, 3^{\infty} ]_{\QQQ}.
\end{equation}
This is a simple consequence of Proposition~\ref{DigitsAndMatrixMultiplication} and the equations \eqref{V10} and \eqref{V01}.

\subsection{Boundaries of cylinder sets contain best approximants}
\begin{definition}\label{DefDelta}
For a rational point $Z=(\frac ac, \frac bc) \in \QQQ$ with coprime integers $a,b,c$, the \emph{height} $\Ht(Z)$ of $Z$ is defined to be 
the (common) denominator $c$ of the coordinates of $Z$.
For $P = (\alpha, \beta) \in \QQQ$ and a rational $Z \in \QQQ$ we define
\begin{equation}\label{DefDeltaPZ}
\delta(P; Z) = \Ht(Z) \cdot \sqrt{
	\left( \alpha - \frac ac \right)^2 + 
	\left( \beta - \frac bc \right)^2}.
\end{equation}
Also,  we define \emph{the Lagrange number} $L(P)$ 
of $P$ to be
\begin{equation}\label{LiminfDefinition}
	L(P) = \limsup_{c\to\infty} \delta(P; Z)^{-1}
\end{equation}
where $\{Z = (\frac ac, \frac bc) \in \QQQ \}$ is ordered by height.
\end{definition}

We define a quadratic form
\[
Q(\mathbf{x})  = x_1^2 + x_2^2 - x_3^2
\]
and let
\[
	\langle \mathbf{x}, \mathbf{y} \rangle = 
	\frac{1}{2}
	\left(
	Q(\mathbf{x} + \mathbf{y}) - Q(\mathbf{x}) - Q(\mathbf{y})
	\right)
\]
be a symmetric, bilinear pairing associated to $Q(\mathbf{x})$.
In terms of coordinates of $\mathbf{x}$ and $\mathbf{y}$,  we have
\[
	\langle \mathbf{x}, \mathbf{y} \rangle = x_1y_1 + x_2y_2 - x_3y_3.
\]
One of our key tools in computing $L(P)$ is the fact that $M_1, M_2, M_3$ are orthogonal with respect to $Q(\mathbf{x})$, or equivalently, to $\langle \,\, , \, \, \rangle$.
\begin{lemma}\label{OrthgonalityOfMd}
	Let $M_1, M_2, M_3$ be as in \eqref{DefinitionMs}.
	\begin{enumerate}[font=\upshape, label=(\alph*)]
\item Define
\[
U_1=
\begin{pmatrix}
1 & 0  & 0 \\
0 & -1 & 0 \\
0 & 0  & 1 \\
\end{pmatrix},
\quad
U_2=
\begin{pmatrix}
-1 & 0  & 0 \\
0 & -1 & 0 \\
0 & 0  & 1 \\
\end{pmatrix},
\quad
U_3=
\begin{pmatrix}
-1 & 0  & 0 \\
0 & 1 & 0 \\
0 & 0  & 1 \\
\end{pmatrix},
\]
and
\[
H=
\begin{pmatrix}
-1 & -2  & 2 \\
-2 & -1  & 2 \\
-2 & -2  & 3 \\
\end{pmatrix}.
\]
Then, $U_d$ and $H$ are of order 2 and they satisfy
\[
	M_d = HU_d, \quad \text{and} \quad M_d^{-1} = U_dH
\]
for $d = 1, 2, 3$. 
		\item $H$ and $U_d$'s are orthogonal with respect to $\langle \cdot, \cdot  \rangle$. 
			Namely, for any $\mathbf{x}$ and $\mathbf{y}$ in $\mathbb{R}^3$,
\[
	\langle H \mathbf{x},H \mathbf{y} \rangle =
	\langle U_d \mathbf{x},U_d \mathbf{y} \rangle =
	\langle \mathbf{x}, \mathbf{y} \rangle
\]
for $d = 1, 2, 3$.
Also, $M_d$'s are orthogonal with respect to $\langle \cdot, \cdot  \rangle$ as well.
	\end{enumerate}
\end{lemma}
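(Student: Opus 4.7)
The lemma splits into two parts, both essentially computational, so my plan is to set up the right framework and then point to the direct verifications.

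For part (a), my plan is to verify everything by direct matrix arithmetic. Since each $U_d$ is diagonal with entries in $\{\pm 1\}$, we get $U_d^2 = I$ immediately. For $H^2 = I$, I would just multiply out the $3\times 3$ matrices. To establish $M_d = HU_d$, note that right-multiplication by $U_d$ just flips the signs of certain columns of $H$: $U_1$ negates column~$2$, $U_2$ negates columns $1$ and $2$, and $U_3$ negates column~$1$. Comparing the resulting matrices column by column with the formulas for $M_1,M_2,M_3$ in \eqref{DefinitionMs} finishes the identity. The inverse formula $M_d^{-1} = U_d H$ then follows formally since $U_d$ and $H$ are both involutions: $(HU_d)(U_dH) = HU_d^2 H = H^2 = I$.

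For part (b), the natural framework is to encode the pairing by its Gram matrix. Let $J = \operatorname{diag}(1,1,-1)$, so that $\langle \mathbf{x},\mathbf{y}\rangle = \mathbf{x}^T J \mathbf{y}$. A linear map $A$ preserves the pairing if and only if $A^T J A = J$. Each $U_d$ is diagonal (hence symmetric) with entries $\pm 1$, so $U_d^T J U_d = U_d J U_d = J$ because diagonal matrices commute and the signs square to $1$. For $H$, observe that $H$ is symmetric, so the required check reduces to $HJH = J$, which I would verify by one direct $3\times 3$ computation. Once $H$ and $U_d$ individually preserve the form, the product $M_d = HU_d$ does too, by the general fact
\[
M_d^T J M_d = (HU_d)^T J (HU_d) = U_d^T (H^T J H) U_d = U_d^T J U_d = J.
\]

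There is no real obstacle here; the only step with any substance is the single matrix multiplication showing $HJH = J$ (equivalently, that the columns of $H$ form a $J$-orthonormal frame: the first two columns satisfy $\langle \cdot,\cdot\rangle = 1$ on themselves and $0$ across, and the last has $\langle \cdot,\cdot \rangle = -1$). This is also the conceptually meaningful statement, because it identifies $H$ (and hence each $M_d$) as an element of the orthogonal group of the Lorentzian form $Q(\mathbf{x}) = x_1^2 + x_2^2 - x_3^2$, which is precisely the framework in which the hyperboloid model $Q(\mathbf{x}) = -1$ and its isometries will be used later in the paper.
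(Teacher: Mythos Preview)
Your proposal is correct and complete. The paper's own proof takes a different, more conceptual route: it simply observes that $U_1,U_2,U_3,H$ are \emph{reflections} in the quadratic space $(\mathbb{R}^3,Q)$, and therefore are automatically orthogonal involutions, referring to \cite{CNT} and \cite{CK18} for details. (A reflection in a nondegenerate vector $\mathbf{v}$ is the map $\mathbf{x}\mapsto \mathbf{x}-2\frac{\langle \mathbf{x},\mathbf{v}\rangle}{\langle \mathbf{v},\mathbf{v}\rangle}\mathbf{v}$, which is always an order-$2$ isometry of the form.) Your approach, by contrast, is a direct self-contained verification via the Gram matrix identity $A^TJA=J$ with $J=\operatorname{diag}(1,1,-1)$. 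What the paper's approach buys is an explanation of \emph{why} these particular matrices work and a link to the geometry of the hyperboloid model; what yours buys is that the reader need not look anything up or recognize the reflection structure. Either is entirely adequate for a lemma of this kind.
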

\begin{proof}
The matrices $U_1, U_2, U_3, H$ are \emph{reflections} in a quadratic space $(\mathbb{R}^3, Q(\mathbf{x}))$.
Therefore, they are orthogonal with respect to $\langle , \rangle$ and are of order 2.
See \cite{CNT} and \cite{CK18} for more detail.
\end{proof}
\begin{proposition}\label{ck}
Let $Z$ be a rational point in the interior of 
a cylinder set $\Cyl(d_1, \dots, d_k)$.
Then, the height of $Z$ is greater than those of boundary points. That is,
\[
\max \{ \Ht(Z^{(1,0)}), \Ht(Z^{(0,1)}) \} < \Ht(Z).
\]
\end{proposition}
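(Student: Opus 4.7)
The plan is to combine a distance formula between rational points of $\QQQ$ with the orthogonality of $N := M_{d_1} \cdots M_{d_k}$ under $\langle\cdot,\cdot\rangle$ and a single chord-length observation on the quarter-circle. First I would establish that for two rational points $P, Q \in \QQQ$ with primitive integer representatives $\mathbf{p}, \mathbf{q} \in \mathbb{Z}^3$ (so that $\Ht(P) = p_3$ and $\Ht(Q) = q_3$), a short computation using $\langle\mathbf{p},\mathbf{p}\rangle = \langle\mathbf{q},\mathbf{q}\rangle = 0$ yields the distance formula
\[
|P - Q|^2 \;=\; \frac{-2\langle \mathbf{p}, \mathbf{q} \rangle}{\Ht(P)\, \Ht(Q)}.
\]

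Next I would use Proposition~\ref{DigitsAndMatrixMultiplication} together with the observation that $N$ and $N^{-1}$ are integer matrices of determinant $\pm 1$ and therefore send primitive integer vectors to primitive integer vectors, to write $\mathbf{z} = N\mathbf{z}'$, where $\mathbf{z}' = (a', b', c')$ is the primitive representative of $Z' := \TTT^k(Z)$. Then $\mathbf{z}^{(1,0)} = N\mathbf{u}^{(1,0)}$ and $\mathbf{z}^{(0,1)} = N\mathbf{u}^{(0,1)}$ by \eqref{V10}--\eqref{V01}, and the orthogonality of $N$ under $\langle\cdot,\cdot\rangle$ supplied by Lemma~\ref{OrthgonalityOfMd} converts all three relevant pairings into explicit expressions $\langle\mathbf{z},\mathbf{z}^{(1,0)}\rangle = a'-c'$, $\langle\mathbf{z},\mathbf{z}^{(0,1)}\rangle = b'-c'$, and $\langle\mathbf{z}^{(1,0)},\mathbf{z}^{(0,1)}\rangle = -1$. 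Substituting into the distance formula of the previous step gives
\[
|Z - Z^{(0,1)}|^2 = \frac{2(c'-b')}{c\, c^{(0,1)}}, \quad
|Z - Z^{(1,0)}|^2 = \frac{2(c'-a')}{c\, c^{(1,0)}}, \quad
|Z^{(1,0)} - Z^{(0,1)}|^2 = \frac{2}{c^{(1,0)}\, c^{(0,1)}}.
\]

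To finish, I would note that $Z$ being in the interior of $\Cyl(d_1, \dots, d_k)$ forces $Z' \ne (1,0), (0,1)$, hence $a', b' \ge 1$; combined with $(a')^2 + (b')^2 = (c')^2$ this yields $c' - a',\, c' - b' \ge 1$. Geometrically, $Z$ lies strictly between $Z^{(1,0)}$ and $Z^{(0,1)}$ on the cylinder-arc, which sits inside the quarter-circle $\QQQ$ and so has angular extent at most $\pi/2 < \pi$; since chord length is a strictly increasing function of arc length on $[0, \pi]$, the full chord $Z^{(1,0)} Z^{(0,1)}$ is strictly longer than each of the two sub-chords. Substituting the formulas above into $|Z^{(1,0)} - Z^{(0,1)}|^2 > |Z - Z^{(0,1)}|^2$ simplifies to $c > (c' - b')\, c^{(1,0)} \ge c^{(1,0)}$, and the other inequality $c > c^{(0,1)}$ follows by symmetry. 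The only conceptually non-routine ingredient is the chord-length comparison; once the distance formula and the orthogonality of $N$ are in hand, everything else is direct algebra.
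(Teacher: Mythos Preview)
Your proof is correct and takes a genuinely different route from the paper's.

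The paper argues by direct matrix manipulation: it writes $\mathbf{z} = N\mathbf{v}$ for a primitive Pythagorean triple $\mathbf{v}$, sets $\mathbf{w} = \mathbf{v} - \mathbf{u}^{(1,0)}$, and proves an inductive lemma to the effect that each $M_d$ preserves the region $\{x,y \ge 2,\ z \ge \max(x,y)\}$; applying this lemma $k$ times shows that the third coordinate of $N\mathbf{w} = \mathbf{z} - \mathbf{z}^{(1,0)}$ is at least $2$, whence $\Ht(Z) - \Ht(Z^{(1,0)}) \ge 2$. Your argument instead exploits the orthogonality of $N$ with respect to $\langle\cdot,\cdot\rangle$ (Lemma~\ref{OrthgonalityOfMd}) to reduce the three relevant pairings to $a'-c'$, $b'-c'$, $-1$, converts these via the distance formula into explicit chord-lengths, and finishes with the monotonicity of chord length in arc length on $[0,\pi]$. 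This is more geometric and avoids any case analysis on the digit $d$; the paper's approach, in exchange, yields the marginally sharper quantitative bound $\Ht(Z) \ge \Ht(Z^{(*)}) + 2$ rather than merely strict inequality.
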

\begin{proof}
Write $Z = (\frac ac, \frac bc)$. Let $\mathbf{z} = (a, b, c)$ be the vector representing $Z$.
Then
\[
		\mathbf{z} =  M_{d_1} \cdots M_{d_k} 
		\mathbf{v}
\]
for some primitive Pythagorean triple $\mathbf{v} = (a_0, b_0, c_0)$.
Define $\mathbf{u}^{(1, 0)}$ and $\mathbf{u}^{(0, 1)}$ to be as in \eqref{EqDefU}.
The condition that $Z$ is an \emph{interior} point of $\Cyl(d_1, \dots, d_k)$ implies that $\mathbf{v}$ is not equal to $\mathbf{u}^{(1, 0)}$ or $\mathbf{u}^{(0,1)}$.
In particular,
	\begin{equation}\label{InitialConditionabc}
		a_0, b_0 \ge 3 \text{ and } c_0\ge 1 + \max\{a_0, b_0\}.
	\end{equation}
	We first prove a lemma, which says that if $x, y\ge 2$ and $z\ge \max\{x, y\}$ and if 
	\[ 
	\begin{pmatrix} x' \\ y' \\ z' \end{pmatrix} = M_d  \begin{pmatrix} x \\ y \\ z \end{pmatrix}
	\]
	for $d=1, 2, 3$, then the triple $(x', y', z')$ also satisfies $x',y' \ge 2$ and $z' \ge \max\{x',y'\}$.
	Here, we do not assume that $(x, y, z)$ is necessarily a Pythagorean triple.
	This lemma is easily proven by direct computation. For example, when $d=1$,
	\[ 
	\begin{pmatrix} x' \\ y' \\ z' \end{pmatrix} = M_1  \begin{pmatrix} x \\ y \\ z \end{pmatrix}
		= \begin{pmatrix} 
	-x +2y + 2z \\ 
	-2x + y + 2z \\
	-2x + 2y + 3z
\end{pmatrix},
\]
so that
\[
		x' \ge 2y + z \ge 2, \qquad
	y' \ge y\ge 2.\\
\]
Moreover, 
\[
		z' - x' = z-x \ge 0,  \qquad
		z' - y' = z+y \ge 0, 
\]
which completes the proof of the lemma for $d=1$. The cases $d=2$ and $d=3$ are similar.

To complete the proof of the present proposition, we define 
\[
\mathbf{w} =
		\begin{pmatrix}
		x \\ y \\ z
		\end{pmatrix}
		=
\mathbf{v} - \mathbf{u}^{(1,0)}.
\]
Then, we see from \eqref{InitialConditionabc} that $x, y\ge2$ and $z \ge \max\{ x, y \}$.
Define $\mathbf{w}' = (x', y', z') = \mathbf{z} - \mathbf{z}^{(1,0)}$. Then,
	\[
	\begin{pmatrix} x' \\ y' \\ z' \end{pmatrix} = 
	\mathbf{z} - \mathbf{z}^{(1,0)}
=
M_{d_1} \cdots M_{d_k}(\mathbf{v} - \mathbf{u}^{(1, 0)}) =
M_{d_1} \cdots M_{d_k}
\begin{pmatrix} x \\ y \\ z \end{pmatrix}.
\]
We apply our lemma $k$ times successively to conclude that $x', y'\ge 2$ and $z'\ge \max\{x', y'\}$.
This implies 
\[
 \Ht(Z) - \Ht(Z^{(1,0)}) = z' \ge \max\{ x', y'\} \ge 2.
\]
Likewise, an identical argument with $(0, 1)$ replacing $(1, 0)$ shows that $\Ht(Z) - \Ht(Z^{(0, 1)}) \ge 2$.
This completes the proof of the proposition.
\end{proof}

Suppose that $P$ is an irrational point in $\QQQ$ with its digit expansion given by an infinite sequence
\[
	P = [d_1, \dots, d_k, d_{k+1}, \dots]_{\QQQ}.
\]
 Also, for each $k\ge 1$, we let
\[
Z_k^{(1, 0)}(P) = Z^{(1, 0)}
\ \text{ and } \
Z_k^{(0, 1)}(P) = Z^{(0, 1)},
\]
which are defined in \eqref{DefinitionZk} with respect to the first $k$ Romik digits of $P$.
Also we write $\Cyl_k(P)$ to mean $\Cyl(d_1, \dots, d_k)$ for each $k \ge 1$.

\begin{proposition}
  Suppose that $P$ is an irrational point in $\QQQ$.
  Then we have
\[
	\bigcap_{k=1}^{\infty} \Cyl_k(P)
	=
	\{ P \}.
\]
Also $ Z_k^{(1, 0)}(P) \to P$ and $Z_k^{(0, 1)}(P) \to P$ as $k\to \infty$ with respect to the usual Euclidean distance in $\mathbb{R}^2$.
\label{prop:dec_cylinder_convergence}
\end{proposition}

\begin{proof}
  First, we claim that the diameter $\mathrm{diam}(\Cyl_k(P))$ of $\Cyl_k(P)$ tends to zero as $k\to\infty$.
  Because $\Cyl_k(P)$ is a (closed) subarc of $\QQQ$, its diameter is simply given by the distance between its boundary points 
  $Z_k^{(1, 0)}(P)$ 
  and
  $Z_k^{(0, 1)}(P)$. 
  Using the notations in \S\ref{SecCylinder} (except that we keep the index $k$ to emphasize its dependence), we have
  \begin{align*}
  \mathrm{diam}(\Cyl_{k}(P))^2
  &=
  \left( 
  \frac{a_k^{(1, 0)}}{c_k^{(1, 0)}}
  -
  \frac{a_k^{(0, 1)}}{c_k^{(0, 1)}}
  \right)^2 
  +
  \left( 
  \frac{b_k^{(1, 0)}}{c_k^{(1, 0)}}
  -
  \frac{b_k^{(0, 1)}}{c_k^{(0, 1)}}
  \right)^2 
  \\
  &=
  \frac{
  -2
  \langle 
  \mathbf{z}_k^{(1, 0)}, \mathbf{z}_k^{(0, 1)}\rangle
  }{
    c_k^{(1, 0)}c_k^{(0, 1)}
  }
  =
  \frac{
  -2
  \langle 
  \mathbf{u}^{(1, 0)}, \mathbf{u}^{(0, 1)}\rangle
  }{
    c_k^{(1, 0)}c_k^{(0, 1)}
  }
  =
  \frac{
  2
  }{
    c_k^{(1, 0)}c_k^{(0, 1)}
  }.
  \end{align*}
  In the second last equality above, the orthogonality of $M_{d}$ is used.

	Writing $M = M_{d_1}\cdots M_{d_k}$, we define
	$\mathbf{y}_1, \mathbf{y}_2, \mathbf{y}_3, \mathbf{y}_4$ to be
	\[
\mathbf{y}_1 = M
\begin{pmatrix}
1 \\ 0 \\ 1
\end{pmatrix},
\quad
\mathbf{y}_2 = M
\begin{pmatrix}
4 \\ 3 \\ 5
\end{pmatrix},
\quad
\mathbf{y}_3 = M
\begin{pmatrix}
3 \\ 4 \\ 5
\end{pmatrix},
\quad
\mathbf{y}_4 = M
\begin{pmatrix}
0 \\ 1 \\ 1
\end{pmatrix}
	\]
	and let $Y_1, \dots, Y_4$ be the points in $\QQQ$ represented by $\mathbf{y}_1,\dots, \mathbf{y}_4$.
Then $Y_1, \dots, Y_4$
are boundary points of $\Cyl(d_1, \dots, d_k, d)$ with $d = 1, 2, 3$, as shown in Figure~\ref{FigCased1}.
Also, $Y_1 = Z_k^{(1, 0)}(P)$
and
$Y_4 = Z_k^{(0, 1)}(P)$
are the boundaries of $\Cyl(d_1, \dots, d_k)$.
So
$c_k^{(1, 0)}c_k^{(0, 1)} = 
\Ht(Z_k^{(1, 0)}(P)) 
\Ht(Z_k^{(0, 1)}(P)) 
=\Ht(Y_1)\Ht(Y_4)
$.
Furthermore,
\[
  c_{k+1}^{(1, 0)}c_{k+1}^{(0, 1)} = 
  \begin{cases}
    \Ht(Y_1)\Ht(Y_2) & \text{ if } d_{k+1} = 1, \\
    \Ht(Y_3)\Ht(Y_2) & \text{ if } d_{k+1} = 2, \\
    \Ht(Y_3)\Ht(Y_4) & \text{ if } d_{k+1} = 3. \\
  \end{cases}
\]
From Proposition~\ref{ck} we have $
\max\{ \Ht(Y_1), \Ht(Y_4) \}
<
\min\{ \Ht(Y_2), \Ht(Y_3) \}
$,
so that 
$c_k^{(1, 0)}c_k^{(0, 1)} < c_{k+1}^{(1, 0)}c_{k+1}^{(0, 1)}$.
This proves the claim that $\mathrm{diam}(\Cyl_k(P)) \to 0$ as $k \to \infty$.

Since the distances from $Z_k^{(1, 0)}(P)$ to $P$ and from $Z_k^{(0, 1)}(P)$ to $P$ are bounded by $\mathrm{diam}(\Cyl_k(P))$ we conclude $Z_k^{(1, 0)}(P) \to P$ and $Z_k^{(0, 1)}(P) \to P$ as $k \to \infty$.
Also,  because $\{ \Cyl_k(P) \}_{k=1}^{\infty}$ forms a decreasing sequence of compact sets of shrinking diameters, their intersection consists of a singleton, namely, $\{ P \}$. 
\end{proof}

\begin{figure}
\begin{tikzpicture}[scale=0.4,every text node part/.style={align=center}]
	\draw[very thick] (9.23, 3.8846) node [below] {$Y_1$} arc (22.61686:67.3815:10);
	\draw[fill] (8, 6) circle (0.1) node[below left] {$Y_2$};
	\draw[fill] (9.230769230769232, 3.8461538461538463) circle (0.1) node{};
	\draw[fill] (8.0, 6.0) circle (0.1) node[below left]{};
	\draw[fill] (53.13:10) circle (0.1) node[below left]{$Y_3$};
	\draw[fill] (3.8461538461538463, 9.230769230769232) circle (0.1)  node[below left]{$Y_4$};
	\draw[<->, dotted, thick] ([shift=(22.61986:10.4)]0, 0) arc (22.61986: 36.8699:10.4) node[midway,  right] {$\Cyl(d_1, \dots, d_k, 1)$};
	\draw[<->, dotted, thick] ([shift=(36.8699:10.4)]0, 0) arc (36.8699: 53.1301:10.4) node[midway, right] {$\Cyl(d_1, \dots, d_k, 2)$};
	\draw[<->, dotted, thick] ([shift=(53.1301:10.4)]0, 0) arc (53.1301: 67.38014: 10.4) node[midway, right] {$\Cyl(d_1, \dots, d_{k}, 3)$};
	\end{tikzpicture}
	\caption{The cylinder set 
	$\Cyl(d_1, \dots, d_k)$ 
	and the points $Y_1, \dots, Y_4$, which are boundaries of 
	$\Cyl(d_1, \dots, d_k, d)$ 
	with $d = 1, 2, 3$.
	\label{FigCased1}}
\end{figure}
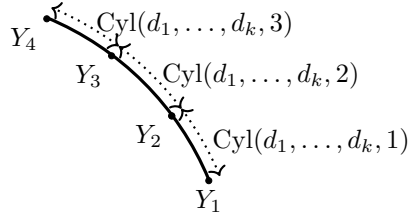

\begin{proposition}\label{PropInvariance}
Let 
\[
P = (\alpha, \beta) = [d_1, d_2, \dots ]_{\QQQ}.
\]
Fix $k$ and write $P' = (\alpha', \beta') = \TTT^k(P)$. 
Also, let 
$\mathbf{p} = (\alpha, \beta, 1)$
and
$\mathbf{p}' = (\alpha', \beta', 1)$
Then
\[
\frac{\langle \mathbf{x}_1, \mathbf{p}\rangle}%
{\langle \mathbf{x}_2, \mathbf{p}\rangle}
=
\frac{\langle M_{d_k}^{-1} \cdots M_{d_1}^{-1}\mathbf{x}_1, \mathbf{p}'\rangle}%
{\langle M_{d_k}^{-1} \cdots M_{d_1}^{-1}\mathbf{x}_2, \mathbf{p}'\rangle}
\]
for any $\mathbf{x}_1, \mathbf{x}_2 \in \mathbb{R}^3$ with 
$\langle \mathbf{x}_2, \mathbf{p}\rangle \neq 0$.
\end{proposition}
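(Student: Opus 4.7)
The plan is to combine the two ingredients set up immediately before the statement: Proposition~\ref{DigitsAndMatrixMultiplication} tells us that $M_{d_k}^{-1}\cdots M_{d_1}^{-1}\mathbf{p}$ is a positive scalar multiple of $\mathbf{p}'$, and Lemma~\ref{OrthgonalityOfMd}(b) tells us that each $M_d$, and hence the product $N := M_{d_k}^{-1}\cdots M_{d_1}^{-1}$, is an isometry for the bilinear form $\langle\cdot,\cdot\rangle$.

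First I would set $N = M_{d_k}^{-1}\cdots M_{d_1}^{-1}$ and invoke Proposition~\ref{DigitsAndMatrixMultiplication} to fix a scalar $\lambda > 0$ (depending on $P$ and $k$) with $N\mathbf{p} = \lambda\,\mathbf{p}'$. Then, using the orthogonality of each $M_{d_j}$ (and so of $N$) with respect to $\langle\cdot,\cdot\rangle$, for any $\mathbf{y} \in \mathbb{R}^3$ we get
\[
\langle \mathbf{y}, \mathbf{p}\rangle \;=\; \langle N\mathbf{y},\, N\mathbf{p}\rangle \;=\; \lambda\, \langle N\mathbf{y},\, \mathbf{p}'\rangle.
\]

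Applying this with $\mathbf{y} = \mathbf{y}_1$ and $\mathbf{y} = \mathbf{y}_2$ and dividing, the common factor $\lambda$ cancels and I obtain the claimed equality of ratios. The hypothesis $\langle \mathbf{y}_2, \mathbf{p}\rangle \neq 0$ ensures the left-hand ratio is well-defined, and because $\lambda > 0$ it equally ensures $\langle N\mathbf{y}_2, \mathbf{p}'\rangle \neq 0$ so the right-hand ratio also makes sense.

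There is essentially no obstacle here: the content of the proposition is the $N$-invariance of the ratio $\langle \mathbf{y}_1, \mathbf{p}\rangle / \langle \mathbf{y}_2, \mathbf{p}\rangle$ combined with the fact that, under $N$, the vector $\mathbf{p}$ representing $P$ is sent to a positive multiple of the vector $\mathbf{p}'$ representing $\TTT^k(P)$. The only point requiring any care is invoking Proposition~\ref{DigitsAndMatrixMultiplication} for the \emph{positivity} of $\lambda$ (rather than merely being some nonzero scalar), which is what allows the cancellation to go through cleanly and keeps both sides simultaneously defined.
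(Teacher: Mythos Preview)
Your proof is correct and is essentially the same as the paper's: both combine Proposition~\ref{DigitsAndMatrixMultiplication} (to get $N\mathbf{p}=\lambda\mathbf{p}'$) with the orthogonality of the $M_d$'s from Lemma~\ref{OrthgonalityOfMd}(b), then cancel the scalar in the ratio. The only cosmetic difference is that the paper first reduces to $k=1$ and iterates, whereas you work directly with the full product $N$; also, note that any nonzero $\lambda$ (not necessarily positive) already suffices for the cancellation.
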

\begin{proof}
It suffices to prove this for the case $k=1$, as the general case will follow from applying this $k$ times. 
We see from Proposition~\ref{DigitsAndMatrixMultiplication} 
that there exists a positive real number, say, $\lambda$ such that $M_d^{-1} \mathbf{p} = \lambda\mathbf{p}'$.
Then the orthogonality of $M_d$ gives
\[
\frac{\langle \mathbf{x}_1, \mathbf{p}\rangle}%
{\langle \mathbf{x}_2, \mathbf{p}\rangle}
=
\frac{\langle M_d^{-1}\mathbf{x}_1, M_d^{-1}\mathbf{p}\rangle}%
{\langle M_d^{-1}\mathbf{x}_2, M_d^{-1}\mathbf{p}\rangle}
=
\frac{\langle M_d^{-1}\mathbf{x}_1, \lambda\mathbf{p}'\rangle}%
{\langle M_d^{-1}\mathbf{x}_2, \lambda\mathbf{p}'\rangle}
=
\frac{\langle M_d^{-1}\mathbf{x}_1, \mathbf{p}'\rangle}%
{\langle M_d^{-1}\mathbf{x}_2, \mathbf{p}'\rangle}.
\]
\end{proof}
\begin{definition}\label{DefAngleTheta}
    Let $P = (\alpha, \beta)$ be a point in the unit circle (that is, $\alpha^2 + \beta^2 = 1$).
    Define $\theta(P)$ to be the unique number in $[0, 2\pi)$ such that
    \[
    \alpha = \sin\theta(P), \qquad
    \beta = \cos\theta(P).
    \]
    In other words, $\theta(P)$ is the angle measured from $(0, 1)$ to $P$ clockwise.
    If $P_1$ and $P_2$ are on the unit circle, we let
    $\theta(P_1, P_2)$ 
    be the angle between $P_1$ and $P_2$,
    that is,
    $\theta(P_1, P_2)\in [0, 2\pi)$ 
    with
    \[
    \theta(P_1, P_2) \equiv \theta(P_1) - \theta(P_2) \bmod 2\pi.
    \]
\end{definition}

\begin{lemma}\label{PropDeltaSquared}
	Let $P= (\alpha, \beta) \in \QQQ$ and let $Z = (\frac ac, \frac bc)$ be a rational point in $\QQQ$. Write
	\[
		\mathbf{p} = (\alpha, \beta, 1) \quad \text{and} \quad \mathbf{z} = (a, b, c).
	\]
	Then
	\[
		\delta^2(P; Z) = -2\Ht(Z) \langle \mathbf{p} , \mathbf{z} \rangle.
	\]
\end{lemma}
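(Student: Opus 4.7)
The plan is a direct algebraic computation, using nothing more than the hypothesis that $P$ lies on the unit circle ($\alpha^2+\beta^2=1$), that $Z$ lies on $\QQQ$ (so $a^2+b^2=c^2$), and the coordinate form of the bilinear pairing, namely $\langle \mathbf{p},\mathbf{z}\rangle = \alpha a + \beta b - c$.

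First I would square the definition \eqref{DefDeltaPZ}, pulling the factor $c = \Ht(Z)$ inside the square root:
\[
\delta^2(P;Z) \;=\; c^2\!\left(\alpha - \tfrac{a}{c}\right)^2 + c^2\!\left(\beta - \tfrac{b}{c}\right)^2
\;=\; (c\alpha - a)^2 + (c\beta - b)^2.
\]
Next I would expand and regroup:
\[
\delta^2(P;Z) \;=\; c^2(\alpha^2+\beta^2) + (a^2+b^2) - 2c(\alpha a + \beta b).
\]

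Now I would substitute the two identities $\alpha^2+\beta^2 = 1$ and $a^2+b^2=c^2$, which give $c^2(\alpha^2+\beta^2)+(a^2+b^2) = 2c^2$, and then factor out $-2c$:
\[
\delta^2(P;Z) \;=\; 2c^2 - 2c(\alpha a + \beta b) \;=\; -2c\bigl(\alpha a + \beta b - c\bigr) \;=\; -2\Ht(Z)\,\langle \mathbf{p},\mathbf{z}\rangle,
\]
as claimed.

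There is no real obstacle here; the statement is essentially the polarization identity for $Q(\mathbf{x}-\mathbf{y})$ evaluated on two null vectors (both $\mathbf{p}$ and $\mathbf{z}/c$ are isotropic for $Q$), and the only points to check carefully are the bookkeeping of the factor of $c$ and the sign convention in $\langle\cdot,\cdot\rangle$. It may be worth remarking that the identity can be rephrased invariantly as $\delta^2(P;Z) = -2\langle \mathbf{p},\mathbf{z}\rangle \cdot c$ precisely because $Q(\mathbf{p}) = 0$ and $Q(\mathbf{z}) = 0$; this observation explains why the formula combines so cleanly with the orthogonality of $M_1, M_2, M_3$ proved in Lemma~\ref{OrthgonalityOfMd} and with Proposition~\ref{PropInvariance}, which is presumably how this lemma will be used downstream when computing $L(P)$.
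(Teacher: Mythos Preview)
Your proof is correct and follows essentially the same direct computation as the paper: expand $(c\alpha-a)^2+(c\beta-b)^2$, use $\alpha^2+\beta^2=1$ and $a^2+b^2=c^2$ to collapse the quadratic terms to $2c^2$, and factor out $-2c$ to recognize the bilinear pairing.
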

\begin{proof}
	This is straightforward from Definition~\ref{DefDelta}.
	Indeed,
	\[
		\delta^2(P; Z) = 
		\left( c\alpha - a \right)^2 + \left( c\beta - b \right)^2
		= 2c^2 - 2c(a\alpha + b\beta) \\
		= -2\Ht(Z) \langle \mathbf{p} , \mathbf{z} \rangle.  \qedhere
	\] 
\end{proof}
\begin{lemma}\label{LemAngleEqDeltaPZ}
	\begin{enumerate}[font=\upshape, label=(\alph*)]
		\item Suppose that $X = (x_1, x_2)$ and $Y = (y_1, y_2)$ are two points on the unit circle and let
\[
	\mathbf{x} = (x_1, x_2, 1) \ \text{ and } \ \mathbf{y} = (y_1, y_2, 1).
\]
Then,
\[
	\langle \mathbf{x}, \mathbf{y} \rangle = -2\sin^2
	\left(
		\frac{\theta(X, Y)}2
\right).
\]
\item Let $P= (\alpha, \beta) \in \QQQ$ and let $Z$
be a rational point in $\QQQ$. Then,
	\[
	\delta(P; Z) = 2 \Ht(Z)
		\sin
		\left|
			\frac{\theta(P, Z)}2 
		\right|.
	\]
\end{enumerate}
\end{lemma}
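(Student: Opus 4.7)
The lemma is a straightforward trigonometric computation tying the quadratic pairing $\langle \cdot, \cdot \rangle$ to arc length on the unit circle. My plan is to establish part (a) by direct computation and then deduce part (b) by combining (a) with Lemma~\ref{PropDeltaSquared}.

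For part (a), since $X=(x_1, x_2)$ and $Y=(y_1, y_2)$ both lie on the unit circle, I would use Definition~\ref{DefAngleTheta} to write $x_1 = \sin\theta(X)$, $x_2 = \cos\theta(X)$ and similarly for $Y$. Then the bilinear form unfolds as
\[
	\langle \mathbf{x}, \mathbf{y} \rangle = x_1 y_1 + x_2 y_2 - 1 = \sin\theta(X)\sin\theta(Y) + \cos\theta(X)\cos\theta(Y) - 1.
\]
The first two summands collapse via the cosine subtraction formula into $\cos(\theta(X) - \theta(Y)) = \cos\theta(X, Y)$, and the half-angle identity $\cos t - 1 = -2\sin^2(t/2)$ delivers the claimed identity.

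For part (b), the idea is to rescale so that part (a) applies. Writing $Z = (a/c, b/c)$ with $c = \Ht(Z)$ and $\mathbf{z} = (a, b, c)$, I would set $\tilde{\mathbf{z}} = c^{-1}\mathbf{z} = (a/c, b/c, 1)$, which lies in the form demanded by part (a). Then Lemma~\ref{PropDeltaSquared} gives
\[
    \delta^2(P; Z) = -2\Ht(Z) \langle \mathbf{p}, \mathbf{z}\rangle = -2\Ht(Z)^2 \langle \mathbf{p}, \tilde{\mathbf{z}}\rangle,
\]
and substituting the identity from part (a) with $X = P$, $Y = Z$ yields $\delta^2(P; Z) = 4\Ht(Z)^2 \sin^2(\theta(P, Z)/2)$. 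Taking a positive square root finishes the proof, where I note that since $\theta(P, Z) \in [0, 2\pi)$ we have $\theta(P,Z)/2 \in [0, \pi)$, on which $\sin$ is nonnegative, so $|\sin(\theta(P, Z)/2)| = \sin|\theta(P, Z)/2|$.

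There is essentially no obstacle in this proof; the only minor subtlety is keeping the scaling convention straight, namely that the ``projective'' vector $\mathbf{z} = (a, b, c)$ used in Lemma~\ref{PropDeltaSquared} differs from the unit-circle representative $\tilde{\mathbf{z}} = (a/c, b/c, 1)$ by a factor of $c = \Ht(Z)$, which accounts for the extra factor of $\Ht(Z)$ appearing in the final formula.
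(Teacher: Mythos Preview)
Your proof is correct and follows exactly the approach the paper indicates: elementary trigonometry for (a) via the cosine subtraction and half-angle identities, and then (b) by combining (a) with Lemma~\ref{PropDeltaSquared} after rescaling $\mathbf{z}$. The paper's own proof is just the one-line remark that (a) is elementary trigonometry and (b) follows from (a) and Lemma~\ref{PropDeltaSquared}, so you have simply filled in the details it omits.
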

\begin{proof}
	The statement (a) is a consequence of elementary trigonometry and (b) is obvious from (a) and Lemma~\ref{PropDeltaSquared}.
\end{proof}
Next, we show in Theorem~\ref{BestApproximation} that 
$\{Z_k^{(1, 0)}(P)\}_{k=0}^{\infty}$ 
and
$\{Z_k^{(0, 1)}(P)\}_{k=0}^{\infty}$,
that is, the endpoints of $\Cyl_k(P)$ for all $k\ge0$
contain the \emph{best approximants} of $P$.

\begin{theorem}\label{BestApproximation}
	Fix an irrational point  $P\in \QQQ$.
	For any rational point $Z\in\QQQ$,
	there exists a $k\ge 0$ such that
	\[ 
	\min\{ \delta(P; Z_k^{(1, 0)}(P)), \delta(P; Z_k^{(0, 1)}(P)) \} \le \delta(P; Z). 
	\]
\end{theorem}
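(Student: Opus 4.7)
The plan is to reduce, via the Romik dynamics $\TTT$, to a comparison on $\QQQ$ itself. Since $P$ is irrational and $Z$ is rational, there is a largest $k \ge 0$ with $Z \in \Cyl_k(P)$; if $Z$ coincides with $Z_k^{(1,0)}(P)$ or $Z_k^{(0,1)}(P)$, the claim is immediate, so I would assume $Z$ lies in the interior of $\Cyl_k(P)$ and $Z \notin \Cyl_{k+1}(P)$.  Setting $P' := \TTT^k(P)$ and $Z' := \TTT^k(Z)$, this forces $Z'$ and $P'$ into distinct depth-one cylinders of $\QQQ$.

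To transfer across $\TTT^k$, I would combine Lemma~\ref{PropDeltaSquared} with Proposition~\ref{DigitsAndMatrixMultiplication} and the $Q$-orthogonality of $M := M_{d_1}\cdots M_{d_k}$ (Lemma~\ref{OrthgonalityOfMd}).  Since the primitive Pythagorean triple of any $W \in \Cyl_k(P)$ factors as $\mathbf{w} = M\mathbf{w}''$ with $\mathbf{w}'' = \TTT^k(W)$'s primitive triple, and since $M^{-1}\mathbf{p} = \mu\mathbf{p}'$ for some $\mu > 0$ depending only on $P$ and $k$, one obtains the scaling identity
\[
\delta^2(P; W) \;=\; \mu\,\frac{\Ht(W)}{\Ht(\TTT^k(W))}\,\delta^2\bigl(P';\,\TTT^k(W)\bigr).
\]
Applying this to $W = Z$ and to the two boundaries $Z_k^{(1,0)}(P), Z_k^{(0,1)}(P)$ (whose $\TTT^k$-images are $(1,0), (0,1)$, of height $1$), and cancelling the common factor $\mu$, the theorem reduces to the weighted base-case inequality
\[
\min\Bigl\{\Ht\bigl(Z_k^{(1,0)}(P)\bigr)\,\delta^2\bigl(P';(1,0)\bigr),\; \Ht\bigl(Z_k^{(0,1)}(P)\bigr)\,\delta^2\bigl(P';(0,1)\bigr)\Bigr\}
\;\le\; \frac{\Ht(Z)}{\Ht(Z')}\,\delta^2(P';Z').
\]

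The principal task is then to establish this inequality under the constraint that $Z' \in \Cyl(e)$ and $P' \in \Cyl(d_1(P'))$ with $e \neq d_1(P')$.  I would handle this by a case analysis on the pair $(d_1(P'), e)$: in each case the explicit third row of one of $M_1, M_2, M_3$ from \eqref{DefinitionMs} both (i) tells us which of $\Ht(Z_k^{(1,0)}(P))\cdot\Ht(Z') \le \Ht(Z)$ or $\Ht(Z_k^{(0,1)}(P))\cdot\Ht(Z')\le\Ht(Z)$ is available, and (ii) forces $Z'$ onto the same side of $P'$ as the boundary of $\QQQ$ for which the unweighted inequality $\delta^2(P';\cdot) \le \delta^2(P';Z')$ holds; combined with the explicit formula $\delta^2(P';Z') = 2\Ht(Z')(\Ht(Z') - a'\alpha' - b'\beta')$ from Lemma~\ref{PropDeltaSquared} and the bound $\Ht(Z') \ge 5$ valid for any primitive Pythagorean triple other than $(1,0,1),(0,1,1)$, the required weighted inequality should follow with room to spare.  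I expect the main obstacle to be exactly this compatibility: verifying that the side dictated by the algebra of $M$ agrees with the side on which the geometric $\delta$-inequality succeeds.  Once this matching is established, the numerical estimates close quickly because $\Ht(Z') \ge 5$ provides substantial slack against $\Ht((1,0)) = \Ht((0,1)) = 1$.
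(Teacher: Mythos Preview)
Your reduction via the scaling identity is correct, and the resulting weighted base-case inequality is what must be proved.  The gap is in your plan for that base case.  The weights $h_1=\Ht(Z_k^{(1,0)}(P))$ and $h_2=\Ht(Z_k^{(0,1)}(P))$ are the third coordinates of $M\mathbf{u}^{(1,0)}$ and $M\mathbf{u}^{(0,1)}$ with $M=M_{d_1}\cdots M_{d_k}$; they depend on the entire history $d_1,\dots,d_k$, not on $(d_{k+1}(P),e)$.  In particular, which of the two multiplicative bounds $h_i\cdot\Ht(Z')\le\Ht(Z)$ holds is governed by the signs of the entries $r_1,r_2$ in the third row of $M$ (hence by $d_k$), and this need not match the boundary lying on the same side of $P'$ as $Z'$.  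A concrete failure of your compatibility: take $k=1$, $d_1=1$ (so $M=M_1$, third row $(-2,2,3)$, giving $h_1=1$, $h_2=5$), $d_2(P)=2$, and $Z'=(5/13,12/13)\in\Cyl(3)$.  Then $\Ht(Z)=-2\cdot5+2\cdot12+3\cdot13=53$, so $h_2\cdot\Ht(Z')=5\cdot13=65>53$; the multiplicative bound fails precisely for the boundary $(0,1)$ that lies on the same side of $P'$ as $Z'$.  Thus the matching you flag as ``the main obstacle'' does not hold in general, and the case analysis on $(d_{k+1}(P),e)$ alone cannot close the argument.

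The paper's proof sidesteps this by never seeking a multiplicative height bound.  It uses only the additive comparison $\Ht(Y)\le\Ht(Z)$ from Proposition~\ref{ck}, and then bounds the ratio $\langle\mathbf p,\mathbf y\rangle/\langle\mathbf p,\mathbf z\rangle$ by $1$ via Proposition~\ref{PropInvariance} and angle/integrality estimates (Lemma~\ref{LemAngleEqDeltaPZ} together with $\langle(a_0,b_0,c_0),(4,3,5)\rangle\in\mathbb Z_{\le-1}$).  A second key difference: when $d_{k+1}(P)=2$ the paper compares $Z$ not to a level-$k$ endpoint but to $Y_2=Z_{k+1}^{(0,1)}(P)$ (or $Y_3$), applying Proposition~\ref{ck} to $\Cyl(d_1,\dots,d_k,e)$ and pushing one more step with $M_2^{-1}$ to place the image of $Z'$ in the second quadrant.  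Your fixed choice of level $k$ would require a separate argument in this case.
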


\begin{proof}
Recall from  Proposition~\ref{prop:dec_cylinder_convergence} that $P = (\alpha,\beta)$ is the unique point lying in the intersection of $\{\Cyl_k(P)\}_{k=0}^{\infty}$. 
If $Z$ is one of the boundary points of $\Cyl_k(P)$ for some $k$, then the statement in the theorem is trivially true and there is nothing to prove. 
So we will assume in what follows that $Z$ is not equal to$Z_k^{(1, 0)}(P)$ or $Z_k^{(0, 1)}(P)$ for any $k$.

From now on, we fix $k$ to be the unique index such that
\[
	Z\in \Cyl(d_1, \dots, d_k) -\Cyl(d_1, \dots, d_k, d_{k+1}).
\]
Also, we write $Z = (\frac ac,\frac bc)$, so that the vector $\mathbf{z} = (a, b, c)$ representing $Z$ is given by 
\[
\mathbf{z} = 
	M_{d_1}\cdots M_{d_k}
\begin{pmatrix}
a_0 \\ b_0 \\ c_0
\end{pmatrix}
\]
for a primitive Pythagorean triple $(a_0, b_0, c_0)$.
We let $P' = (\alpha', \beta') = \TTT^k(\alpha, \beta)$ and write $\mathbf{p} = (\alpha, \beta, 1)$ and
$\mathbf{p}' = (\alpha', \beta', 1)$.
Following the notations introduced in the proof of Proposition~\ref{prop:dec_cylinder_convergence}, we will let $Y_1, Y_2, Y_3, Y_4$ be the points in $\Cyl_k(P)$ that are represented by $\mathbf{y}_1, \mathbf{y}_2, \mathbf{y}_3, \mathbf{y}_4$. 

First, we consider the case $d_{k+1}(P)= 2$.
Since $Z\not\in \Cyl_{k+1}(P)$, 
we have either 
$Z\in \Cyl(d_1,\dots, d_k, 1)$  or
$Z\in \Cyl(d_1,\dots, d_k, 3)$. 
By symmetry we may assume without loss of generality that
$Z\in \Cyl( (d_1,\dots, d_k, 1)$.
We shall prove that
\begin{equation}\label{EqY2Z}
    \delta(P; Y_2) \le \delta(P; Z),
\end{equation}
which would prove the conclusion of the theorem because $Y_2 = Z_{k+1}^{(0, 1)}(P)$ in this case.
Since $Z$ is assumed not to be equal to the boundary points of $\Cyl_k(P)$ or $\Cyl_{k+1}(P)$, we see that $Z$ is an interior point of $\Cyl(d_1, \dots, d_k, 1)$.
Therefore we can apply Proposition~\ref{ck} to $\Cyl(d_1,\dots, d_k,1)$ and obtain $\Ht(Y_2)\le \Ht(Z)$. 
Using this and Lemma~\ref{PropDeltaSquared}, we have
\[
\frac{\delta^2(P; Y_2)}{\delta^2(P; Z)}
=
\frac{
-2\Ht(Y_2) \langle \mathbf{p}, \mathbf{y}_2 \rangle
}{
-2\Ht(Z) \langle \mathbf{p}, \mathbf{z} \rangle
}
\le
\frac{
\langle \mathbf{p}, \mathbf{y}_2 \rangle
}{
\langle \mathbf{p}, \mathbf{z} \rangle
}.
\]
From Proposition~\ref{PropInvariance}, we get
\[
\frac{
\langle \mathbf{p}, \mathbf{y}_2 \rangle
}{
\langle \mathbf{p}, \mathbf{z} \rangle
}
=
\frac{
\langle \mathbf{p}', M^{-1}\mathbf{y}_2 \rangle
}{
\langle \mathbf{p}', M^{-1}\mathbf{z} \rangle
}
=
\frac{
\langle \mathbf{p}', (4, 3, 5)\rangle
}{
\langle \mathbf{p}', (a_0, b_0, c_0) \rangle
}.
\]
Since $Z\in \Cyl(d_1, \dots, d_k, 1)$ we have
$\TTT^k(Z) = (\frac{a_0}{c_0}, \frac{b_0}{c_0}) \in \Cyl(1)$.
In particular, the vector
\[
M_1^{-1}
\begin{pmatrix}
a_0\\ b_0\\c_0 
\end{pmatrix}
=
\begin{pmatrix}
-a_0 - 2b_0 + 2c_0 \\ 
2a_0 + b_0 - 2c_0 \\
-2a_0 - 2b_0 + 3c_0 
\end{pmatrix}
\]
must represent a point in $\QQQ$, so that
\begin{equation}\label{Eq24}
\begin{aligned}
-a_0 - 2b_0 + 2c_0 &>0 ,\\
2a_0 + b_0 - 2c_0 &>0, \\
-2a_0 - 2b_0 + 3c_0 &>0.
\end{aligned}
\end{equation}
On the other hand, if we define 
$(a_0',  b_0', c_0')$ to be
$(a_0', b_0', c_0') = M_2^{-1}(a_0, b_0, c_0)$, then
the point $(\frac{a_0'}{c_0'}, \frac{b_0'}{c_0'})$ is in the second quadrant, that is, 
$a_0' < 0$, $b_0'>0$, and $c_0'>0$.
Indeed, it is easy to deduce from Lemma~\ref{OrthgonalityOfMd} that 
$M_2^{-1} = U_3 M_1^{-1}$.
Therefore
\[
\begin{pmatrix}
a_0'\\ b_0'\\c_0' 
\end{pmatrix}
=M_2^{-1}
\begin{pmatrix}
a_0\\ b_0\\c_0 
\end{pmatrix}
=U_3 M_1^{-1}
\begin{pmatrix}
a_0\\ b_0\\c_0 
\end{pmatrix}
=
\begin{pmatrix}
a_0 + 2b_0 - 2c_0 \\ 
2a_0 + b_0 - 2c_0 \\
-2a_0 - 2b_0 + 3c_0 
\end{pmatrix},
\]
and the assertion follows from this and \eqref{Eq24}.
Writing $\mathbf{p}'' = (\alpha'', \beta'', 1)$ with $P'' = (\alpha'', \beta'') = \TTT(\alpha', \beta')$, we apply Proposition~\ref{PropInvariance} once again to obtain
\begin{equation}\label{EqBilinearInequality}
\begin{split}
\frac{
\langle \mathbf{p}', (4, 3, 5) \rangle
}{
\langle \mathbf{p}', (a_0, b_0, c_0) \rangle
}
&=
\frac{
\langle \mathbf{p}'', M_2^{-1}(4, 3, 5) \rangle
}{
\langle \mathbf{p}'', M_2^{-1}(a_0, b_0, c_0) \rangle
} 
=
\frac{
\langle \mathbf{p}'', (0, 1, 1) \rangle
}{
\langle \mathbf{p}'', (a_0', b_0', c_0') \rangle
} \\
&
=
\frac1{c_0}
\frac{
\langle \mathbf{p}'', (0, 1, 1) \rangle
}{
\langle \mathbf{p}'', 
\left(
\frac{a_0'}{c_0'}, \frac{b_0'}{c_0'}, 1
\right) 
\rangle
}
\le 
\frac{
\langle \mathbf{p}'', (0, 1, 1) \rangle
}{
\langle \mathbf{p}'', 
\left(
\frac{a_0'}{c_0'}, \frac{b_0'}{c_0'}, 1
\right) 
\rangle
}.
\end{split}
\end{equation}
Since the point $
(\frac{a_0'}{c_0'}, \frac{b_0'}{c_0'}) $
is in the second quadrant
we must have 
\[
0 \le 
\theta(P'', (0, 1))
\le 
\theta(P'', (\tfrac{a_0'}{c_0'}, \tfrac{b_0'}{c_0'}) )
\le \pi.
\]
(See Definition~\ref{DefAngleTheta}.)
We conclude from Lemma~\ref{LemAngleEqDeltaPZ}
that
\[
\frac{
\langle \mathbf{p}'', (0, 1, 1) \rangle
}{
\left \langle \mathbf{p}'', 
\left(
\frac{a_0'}{c_0'}, \frac{b_0'}{c_0'}, 1
\right) 
\right \rangle
}
\le 1.
\]
Combining this with \eqref{EqBilinearInequality}, we establish \eqref{EqY2Z}.

Next, we consider the cases $d_{k+1}(P) = 1$ and $d_{k+1}(P) = 3$.
By symmetry it will be sufficient for us to prove the former.
Then
$Z \in \Cyl(d_1, \dots, d_k, 2)$ or 
$Z \in \Cyl(d_1, \dots, d_k, 3)$.
Under this assumption, we shall prove 
\begin{equation}\label{EqY1Z}  
    \delta(P; Y_1) \le \delta(P; Z),
\end{equation}
which would then complete the proof of Theorem~\ref{BestApproximation}
because $Y_1 = Z_k^{(1, 0)}(P)$.
Apply Proposition~\ref{ck} to the cylinder set $\Cyl_k(P) = \Cyl(d_1, \dots, d_k)$
to obtain $\Ht(Y_1) \le \Ht(Z)$.
This and Proposition~\ref{PropInvariance} give 
\begin{equation}\label{EqFracPY1Z}
\frac{\delta^2(P; Y_1)}{\delta^2(P; Z)}
=
\frac{
-2\Ht(Y_1) \langle \mathbf{p}, \mathbf{y}_1 \rangle
}{
-2\Ht(Z) \langle \mathbf{p}, \mathbf{z} \rangle
}
\le
\frac{
\langle \mathbf{p}, \mathbf{y}_1 \rangle
}{
\langle \mathbf{p}, \mathbf{z} \rangle
}
=
\frac{
\langle \mathbf{p}', (1, 0, 1) \rangle
}{
\langle \mathbf{p}', (a_0, b_0, c_0)\rangle
}.
\end{equation}
Recall that $d_{k+1}(P)$ is assumed to be 1, so that $P'\in \Cyl(1)$. 
So we must have 
$0 \le \theta ((1, 0), P') \le \theta((1, 0),(\frac 45, \frac35)) < \pi/2$.
Then Lemma~\ref{LemAngleEqDeltaPZ} shows that
\begin{equation}\label{EqLangle15}
-  \langle \mathbf{p}', (1, 0, 1) \rangle
\le
-  
\langle (\tfrac45, \tfrac35, 1), (1, 0, 1) \rangle
=\frac15.
\end{equation}
On the other hand, we are assuming 
$Z \in \Cyl(d_1, \dots, d_k, 2)$
or
$Z \in \Cyl(d_1, \dots, d_k, 3)$.
As a consequence, 
$(\frac{a_0}{c_0}, \frac{b_0}{c_0})$ 
is in
$\Cyl(2)$
or
$\Cyl(3)$.
In either case, 
we have 
\[   
0 \le
\theta(
(\tfrac 45, \tfrac35), (\tfrac{a_0}{c_0}, \tfrac{b_0}{c_0}))
\le
\theta(
P', (\tfrac{a_0}{c_0}, \tfrac{b_0}{c_0}))
\le 
\frac{\pi}{2}
\]
because $P'\in \Cyl(1)$ and $\{(\frac 45, \frac35)\}$ is the intersection of $\Cyl(1)$ and $\Cyl(2)$
(see Figure~\ref{FigCylinderPicture}).
Again, from Lemma~\ref{LemAngleEqDeltaPZ}, we obtain
\[
-  
\langle 
(\tfrac{a_0}{c_0}, \tfrac{b_0}{c_0}, 1) ,
(\tfrac 45, \tfrac35, 1)
\rangle
\le
-  
\langle 
(\tfrac{a_0}{c_0}, \tfrac{b_0}{c_0}, 1) ,
\mathbf{p}'
\rangle,
\]
which is equivalent to
\begin{equation}\label{EqLangle151}
-  
\langle 
(a_0, b_0, c_0) ,
\mathbf{p}'
\rangle
\ge
-
\langle 
(a_0, b_0, c_0) ,
(\tfrac 45, \tfrac35, 1)
\rangle
=
-\frac15
\langle 
(a_0, b_0, c_0) ,
(4, 3, 5)
\rangle.
\end{equation}
Notice that $(a_0, b_0, c_0) \neq (4, 3, 5)$, otherwise $Z = Y_2$ which would then violate the assumption that $Z$ is not equal to any boundary point of $\Cyl_{k+1}(P)$.
Therefore
$
\langle 
(a_0, b_0, c_0) ,
(4, 3, 5)
\rangle$ is strictly negative and is at most $-1$ since $a_0, b_0, c_0$ are integers. 
Hence \eqref{EqLangle151} becomes
\begin{equation}\label{EqLangle152}
-  
\langle 
(a_0, b_0, c_0) ,
\mathbf{p}'
\rangle
\ge
\frac15.
\end{equation}
Combining \eqref{EqFracPY1Z}, \eqref{EqLangle15}, and \eqref{EqLangle152}, we obtain \eqref{EqY1Z}. 
The proof of Theorem~\ref{BestApproximation} is now completed.
\end{proof}

\begin{definition}\label{DefinitionPCheck}
	For $P = (\alpha, \beta)  = [d_1, d_2, \dots]_{\QQQ}$, define
	\[
		P^{\vee} = (\beta, \alpha).
	\]
	Also, for $d\in\{1, 2, 3\}$, define
	\[
		d^{\vee} =
		\begin{cases}
			3 & \text{ if } d = 1, \\
			2 & \text{ if } d = 2, \\
			1 & \text{ if } d = 3. \\
		\end{cases}
	\]
	Clearly, we have
	\[
		P^{\vee} = [d_1^{\vee}, d_2^{\vee},  \dots]_{\QQQ}.
	\]
\end{definition}

\begin{proposition}\label{ThmDeltaPcheck}
	We have
	\[
		\delta(P; Z_k^{(0, 1)}(P)) = \delta(P^{\vee}; Z_k^{(1, 0)}(P^{\vee})),
	\]
	and
	\[
		\delta(P; Z_k^{(1, 0)}(P)) = \delta(P^{\vee}; Z_k^{(0, 1)}(P^{\vee})).
	\]
\end{proposition}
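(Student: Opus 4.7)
The plan is to exploit the fact that the involution $\vee:(x,y) \mapsto (y,x)$ is a Euclidean isometry of $\mathbb{R}^2$ which preserves $\QQQ$ and preserves the height function on rational points of $\QQQ$. Once I show that $\delta(\,\cdot\,;\,\cdot\,)$ is invariant under applying $\vee$ simultaneously to both of its arguments, the proposition reduces to matching up, under $\vee$, the two boundary points of $\Cyl_k(P)$ with the two boundary points of $\Cyl_k(P^\vee)$; this matching can be read off directly from the digit-expansion formula \eqref{EqZboundary}.

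First I would verify the general identity
\[
\delta(P;Z) \;=\; \delta(P^\vee; Z^\vee)
\]
for every $P \in \QQQ$ and every rational $Z = (a/c, b/c) \in \QQQ$. Swapping coordinates does not change the common denominator $c$, so $\Ht(Z^\vee) = \Ht(Z)$; and the reflection across the line $y = x$ is a Euclidean isometry, so $\|P - Z\| = \|P^\vee - Z^\vee\|$. The asserted identity is then immediate from the definition \eqref{DefDeltaPZ} of $\delta$.

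Second, I would identify the $\vee$-images of the two boundary points of $\Cyl_k(P)$. Writing $P = [d_1,d_2,\dots]_\QQQ$, the formula \eqref{EqZboundary} gives
\[
Z_k^{(1,0)}(P) = [d_1, \dots, d_k, 1^\infty]_\QQQ, \qquad Z_k^{(0,1)}(P) = [d_1, \dots, d_k, 3^\infty]_\QQQ.
\]
By Definition~\ref{DefinitionPCheck}, applying $\vee$ to a point of $\QQQ$ replaces each Romik digit $d$ in its expansion by $d^\vee$; in particular $1^\infty$ becomes $3^\infty$ and vice versa. Since $P^\vee = [d_1^\vee, d_2^\vee, \dots]_\QQQ$, a second application of \eqref{EqZboundary} yields
\[
(Z_k^{(0,1)}(P))^\vee = [d_1^\vee, \dots, d_k^\vee, 1^\infty]_\QQQ = Z_k^{(1,0)}(P^\vee),
\]
and symmetrically $(Z_k^{(1,0)}(P))^\vee = Z_k^{(0,1)}(P^\vee)$. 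Feeding these equalities into the invariance identity of the first step gives the two claims of the proposition.

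I do not anticipate a genuine obstacle here: the argument is essentially a change-of-variables. The only minor bookkeeping point is that rational points on $\QQQ$ other than $(1,0)$ and $(0,1)$ admit two valid Romik expansions, but both choices yield the same geometric point, so the identifications above are unambiguous.
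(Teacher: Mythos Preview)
Your argument is correct and is essentially the same as the paper's: both proofs amount to showing that the coordinate swap $(x,y)\mapsto(y,x)$ carries $Z_k^{(0,1)}(P)$ to $Z_k^{(1,0)}(P^\vee)$ while preserving height and Euclidean distance. The only cosmetic difference is that the paper verifies this via the matrix identity $SM_dS=M_{d^\vee}$ applied to the defining products \eqref{V10}--\eqref{V01}, whereas you invoke the already-established digit-expansion description \eqref{EqZboundary} together with Definition~\ref{DefinitionPCheck}.
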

\begin{proof}
	Let
	\[
		S = 
		\begin{pmatrix}
			0 & 1 & 0 \\
			1 & 0 & 0 \\
			0 & 0 & 1 \\
		\end{pmatrix}.
	\]
	Then it is straightforward to see that $S^2 = I_3$ (the $3\times 3$ identity) and 
	\[
		SM_dS = M_{d^{\vee}}
	\]
	for $d\in\{1,2,3\}$.
	Hence
	\begin{align*}
		M_{d_1} M_{d_2}\cdots M_{d_k} 
		\mathbf{u}^{(0, 1)} 
		&=
		S\cdot
		(SM_{d_1}S)(S M_{d_2} S) \cdots (SM_{d_k}S) 
		(S\mathbf{u}^{(0, 1)}) \\
		&=
		S\cdot
		M_{d_1^{\vee}}M_{d_2^{\vee}} \cdots M_{d_k^{\vee}} 
		\mathbf{u}^{(1, 0)}.
	\end{align*}
	This shows that 
	$\mathbf{z}_k^{(0, 1)}(d_1, d_2, \dots d_k) = S
	\cdot
	\mathbf{z}_k^{(1, 0)}(d_1^{\vee}, d_2^{\vee}, \dots d_k^{\vee})$.
	(cf.~\eqref{V10} and \eqref{V01})
	The first equality in the proposition easily follows from this.
	The second equality is proven similarly.
\end{proof}
\begin{corollary}\label{DeltaLiminfMin}
	Suppose that $P$ is an irrational point in $\QQQ$. Then,
	\[
		L(P) = 
		\limsup_{k\to\infty}
		\max
		\left\{
	\delta(P;Z_k^{(0, 1)}(P))^{-1},
	\delta(P^{\vee};Z_k^{(0, 1)}(P^{\vee}))^{-1} 
		\right\}.
\]
\end{corollary}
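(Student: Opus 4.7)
The plan is to first reduce the defining $\limsup$ of $L(P)$ over all rational approximants to a $\limsup$ over only the boundary points of the nested cylinder sets $\Cyl_k(P)$, and then use the symmetry between $P$ and $P^\vee$ encoded in Proposition~\ref{ThmDeltaPcheck} to produce the stated formula. Introduce the shorthand
\[
M_k(P) := \max\bigl\{\delta(P; Z_k^{(1,0)}(P))^{-1},\; \delta(P; Z_k^{(0,1)}(P))^{-1}\bigr\},
\]
so that the first step amounts to proving $L(P) = \limsup_{k\to\infty} M_k(P)$.

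The inequality $\limsup_k M_k(P) \le L(P)$ is immediate: both $\{Z_k^{(1,0)}(P)\}$ and $\{Z_k^{(0,1)}(P)\}$ converge to the irrational point $P$ as $k\to\infty$ (since $\bigcap_k \Cyl_k(P) = \{P\}$), so their heights must tend to infinity, making them subsequences of the rational approximants used to define $L(P)$. For the reverse inequality, I would select a sequence $\{Z_n\}$ of rational points in $\QQQ$ with $\Ht(Z_n) \to \infty$ and $\delta(P; Z_n)^{-1} \to L(P)$. Theorem~\ref{BestApproximation} produces, for each $n$, an index $k_n \ge 0$ such that $M_{k_n}(P) \ge \delta(P; Z_n)^{-1}$. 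The key remaining point is that $k_n \to \infty$: from $\delta(P; Z_n) = \Ht(Z_n) \cdot d_{\mathrm{Eucl}}(P, Z_n)$, the boundedness of $\delta(P; Z_n)$ combined with $\Ht(Z_n) \to \infty$ forces $Z_n \to P$ in $\QQQ$, and since the cylinders $\Cyl_k(P)$ shrink to $\{P\}$, every cylinder eventually contains all $Z_n$ beyond some index. Extracting a subsequence along which $k_n$ is strictly increasing then yields $\limsup_k M_k(P) \ge \liminf_n M_{k_n}(P) \ge L(P)$.

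For the second step, Proposition~\ref{ThmDeltaPcheck} gives
\[
\delta(P; Z_k^{(1,0)}(P)) = \delta(P^\vee; Z_k^{(0,1)}(P^\vee)),
\]
so substituting into $M_k(P)$ converts it into
\[
\max\bigl\{\delta(P; Z_k^{(0,1)}(P))^{-1},\; \delta(P^\vee; Z_k^{(0,1)}(P^\vee))^{-1}\bigr\},
\]
which is precisely the quantity appearing on the right-hand side of the corollary.

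The only delicate point is the verification that $k_n \to \infty$, since Theorem~\ref{BestApproximation} provides no direct quantitative relation between the height of $Z$ and the index $k$ it yields. The geometric observation that only rational points approaching $P$ can carry $\delta(P; Z)^{-1}$ away from zero, together with the nested structure of the $\Cyl_k(P)$, resolves this, after which the proof is a formal manipulation of limsups.
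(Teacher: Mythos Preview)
Your proof is correct and follows the same two steps as the paper: reduce $L(P)$ to the $\limsup$ over the cylinder boundary points via Theorem~\ref{BestApproximation}, then invoke the symmetry of Proposition~\ref{ThmDeltaPcheck}. Your write-up is in fact more detailed than the paper's three-line argument, which simply asserts the first step; your handling of the point $k_n\to\infty$ is the one place where you implicitly use the construction inside the proof of Theorem~\ref{BestApproximation} (where the index $k$ is taken to be the cylinder depth of $Z$) rather than the bare statement, but that is entirely legitimate and the paper glosses over the same issue.
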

\begin{proof}
Thanks to Theorem~\ref{BestApproximation}, we have
\[
L(P) =
		\limsup_{k\to\infty}
		\max
		\left\{
	\delta(P;Z_k^{(0, 1)}(P))^{-1},
	\delta(P;Z_k^{(1, 0)}(P))^{-1} 
		\right\}.
\]
However, Proposition~\ref{ThmDeltaPcheck} says
\[
	\delta(P;Z_k^{(1, 0)}(P)) 
	=
	\delta(P^{\vee};Z_k^{(0, 1)}(P^{\vee}))
\]
and the proposition follows from this.
\end{proof}

\subsection{Perron's formula}\label{SecPerron}
\begin{definition}\label{DefNormRomikSequence}
	For $P=(\alpha, \beta) \in \QQQ$, we define $ \| P \|$ to be
\[
	\| P \| = 
	\frac1{\sqrt2}
	\left(
\cot
\left(
	\frac{\theta(P)}2
\right)
-1
\right).
\]
Here, $\theta(P)$ is defined in Definition~\ref{DefAngleTheta}.
\end{definition}

Our definition of $\| P \|$ is a slight modification of the standard stereographic projection ($=\cot(\theta(P)/2)$).
See \eqref{EqHalfCotangent}.
This modification is intended to make our version of \emph{Perron's formula} in Theorem~\ref{ThmPerron} closely resemble a classical Perron's formula for real numbers.
Also, see Remark~\ref{rem:stereo_graphic_projection_justification}.

Define a map $\TTT_{[0, \infty]}:  [0, \infty] \longrightarrow [0, \infty]$ to be
\begin{equation}\label{EqTreal}
\TTT_{[0, \infty]}(t) =
\begin{cases}
\frac{t}{-\sqrt2 t + 1} & \text{if } 0 \le t \le \frac1{\sqrt2},\\
\frac{-t + \sqrt2}{\sqrt2 t - 1} & \text{if } \frac1{\sqrt2} \le t \le \sqrt2,\\
t -\sqrt2  & \text{if } t \ge \sqrt2.
\end{cases}
\end{equation}
Then it is straightforward to verify that the following diagram commutes.
\begin{equation}
\begin{tikzcd}[column sep = large]
\QQQ 
\arrow{d}{\TTT}
\arrow{r}{P\mapsto\|P \|}
& {[0, \infty]} 
\arrow{d}{\TTT_{[0, \infty]}} 
\\
\QQQ 
\arrow{r}{P\mapsto\|P \|}
& {[0, \infty]}
\end{tikzcd}
  \label{EqCommuteDiagram}
\end{equation}
In other words, $P \mapsto \| P \|$ is a conjugate map from $(\QQQ, \TTT)$ to $([0,\infty], \TTT_{[0,\infty]})$.

\begin{lemma}\label{LemRefereeRequest}
Let $P = (\alpha, \beta)$ be such that $\alpha^2 + \beta^2 = 1$ and let $\mathbf{p}$ be any nonzero scalar multiple of $(\alpha, \beta, 1)$.
Define $\mathbf{p}' = (p_1', p_2', p_3')$ to be $\mathbf{p}' = H\mathbf{p}$ and let $P' = (p_1'/p_3', p_2'/p_3')$. 
(See Lemma~\ref{OrthgonalityOfMd} to recall the definition of $H$.)
Then
\[
\cot
\left(
	\frac{\theta(P)}2
\right)
+
\cot
\left(
	\frac{\theta(P')}2
\right)
=
2.
\]
\end{lemma}

\begin{proof}
Write $\mathbf{p}= (p_1, p_2, p_3)$.
Then the definition of $\mathbf{p}'$ gives
\[
p_1' = -p_1 -2p_2 + 2p_3,\quad
p_2' = -2p_1 -p_2 + 2p_3,\quad
p_3' = -2p_1 -2p_2 + 3p_3,
\]
so that
\[
\frac{p_1}{p_3 - p_2} + 
\frac{p_1'}{p_3' - p_2'} 
=
\frac{2(p_3 - p_2)}{p_3 - p_2} =2.
\]
On the other hand, using the definition of $\theta(P)$, one easily shows 
\begin{equation}\label{EqHalfCotangent}
\cot
\left(
	\frac{\theta(P)}2
\right)
=
\frac{\alpha}{1 - \beta}.
\end{equation}
From this, the statement in the lemma is easily deduced.
\end{proof}

We now state and prove our analogue of Perron's formula.
\begin{theorem}[Perron's formula]\label{ThmPerron}
	Fix an irrational point $P \in \QQQ$ with 
	\[
		P = [d_1, d_2, \dots, d_k, d_{k+1}, \dots]_{\QQQ}.
	\]
	For each $k\geq 1$, define $P_k'$ and $P_k''$ to be
	\[
	P_k' =  [d_{k+1}, d_{k+2}, \dots ]_{\QQQ}, \qquad
	P_k'' = [d_k, d_{k-1}, \dots, d_2, d_1, 3^{\infty}]_{\QQQ}.
	\]
	Then, for all large $k$,
	\[
		\delta(P; Z_k^{(0, 1)}(P)) = \frac{\sqrt2}{\| P_k' \| + \| P_k'' \|  } \cdot \epsilon_k(P).
	\]
	Here, 
\[
	\epsilon_k(P) =
	\frac{\sin(\theta(P)/2)}{\sin(\theta(Z_k^{(0, 1)}(P))/2)}.
\]
Therefore, $\epsilon_k(P)\to1$ as $k\to\infty$.
\end{theorem}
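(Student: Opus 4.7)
The plan is to reduce Perron's formula to an exact algebraic identity by combining Lemma~\ref{LemAngleEqDeltaPZ} with a trigonometric conversion, and then to verify that identity via an explicit M\"obius-transformation lift of the matrices $M_d$ to $\mathrm{SL}_2^{\pm}(\mathbb{R})$. The cleanliness of the final formula will emerge from a ``diagonal-swap'' reversal symmetry in the associated $2\times 2$ matrix products.

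\medskip

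\textbf{Step 1 (Reduction).} Applying Lemma~\ref{LemAngleEqDeltaPZ}(b) gives $\delta(P;Z_k^{(0,1)}(P))=2c^{(0,1)}\sin|\theta(P,Z_k^{(0,1)})/2|$. Using the cotangent-subtraction identity together with $\cot(\theta(X)/2)=\sqrt{2}\,\|X\|+1$ (from Definition~\ref{DefNormRomikSequence}) yields
\[
\sin|\theta(P,Z_k^{(0,1)})/2|=\sqrt{2}\,\bigl|\|P\|-\|Z_k^{(0,1)}\|\bigr|\sin(\theta(P)/2)\sin(\theta(Z_k^{(0,1)})/2).
\]
Combined with $\sin^2(\theta(Z_k^{(0,1)})/2)=(c^{(0,1)}-b^{(0,1)})/(2c^{(0,1)})$ and the definition of $\epsilon_k(P)$, the theorem becomes equivalent to the algebraic identity
\[
\bigl(c^{(0,1)}-b^{(0,1)}\bigr)\,\bigl|\|P\|-\|Z_k^{(0,1)}\|\bigr|\,\bigl(\|P_k'\|+\|P_k''\|\bigr)=1.
\]

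\medskip

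\textbf{Step 2 (M\"obius parametrization of the null cone).} Solving $\|P\|^2=(1-\alpha)/(1-\beta)$ together with $\alpha^2+\beta^2=1$ shows that any null vector $\mathbf{p}$ with $\|P\|=s$ is proportional to $\mathbf{v}(s):=\mathbf{u}^{(1,0)}+s\sqrt{2}\,(1,1,1)^{\!T}+s^2\mathbf{u}^{(0,1)}$. A direct computation then yields the distance formula $\langle\mathbf{v}(s_1),\mathbf{v}(s_2)\rangle=-(s_1-s_2)^2$ and the intertwining $M_d\mathbf{v}(s)=\gamma_d(s)^2\mathbf{v}(\Psi_d(s))$, where $\Psi_d$ is the M\"obius transformation with $\mathrm{SL}_2^{\pm}(\mathbb{R})$-matrix
\[
A_1=\begin{pmatrix}1&0\\\sqrt{2}&1\end{pmatrix},\qquad A_2=\begin{pmatrix}1&\sqrt{2}\\\sqrt{2}&1\end{pmatrix},\qquad A_3=\begin{pmatrix}1&\sqrt{2}\\0&1\end{pmatrix},
\]
and $\gamma_d(s)$ is the affine-linear denominator of $A_d$. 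Writing $B:=A_{d_1}\cdots A_{d_k}=\bigl(\begin{smallmatrix}p&q\\r&t\end{smallmatrix}\bigr)$, an induction on $k$ gives
\[
\mathbf{z}_k^{(0,1)}=r^2\,\mathbf{u}^{(1,0)}+rp\sqrt{2}\,(1,1,1)^{\!T}+p^2\,\mathbf{u}^{(0,1)},
\]
which immediately supplies both $\|Z_k^{(0,1)}\|=p/r$ and the central identity $c^{(0,1)}-b^{(0,1)}=r^2$.

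\medskip

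\textbf{Step 3 (Reversal and conclusion).} A separate three-case induction (on left-multiplication by $A_d$) proves the reversal identity $A_{d_k}\cdots A_{d_1}=\bigl(\begin{smallmatrix}t&q\\r&p\end{smallmatrix}\bigr)$, so that $\|P_k''\|=t/r$. Then
\[
\|P\|-\|Z_k^{(0,1)}\|=\frac{p\|P_k'\|+q}{r\|P_k'\|+t}-\frac{p}{r}=\frac{-\det B}{r(r\|P_k'\|+t)},
\]
and $|\det B|=1$ gives $\bigl|\|P\|-\|Z_k^{(0,1)}\|\bigr|=1/\bigl(r^2(\|P_k'\|+\|P_k''\|)\bigr)$. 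Combined with $c^{(0,1)}-b^{(0,1)}=r^2$, the reduced identity follows. Finally, $\epsilon_k(P)\to 1$ is immediate because the shrinking cylinder sets $\{\Cyl_k(P)\}$ force $Z_k^{(0,1)}(P)\to P$, whence $\theta(Z_k^{(0,1)})\to\theta(P)$. The hypothesis ``for all large $k$'' is what guarantees $r\neq 0$, i.e.\ that at least one non-$3$ digit has appeared in $d_1,\ldots,d_k$.

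\medskip

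The main obstacle is the ``diagonal-swap'' reversal identity $A_{d_k}\cdots A_{d_1}=B|_{\text{diag.\ swapped}}$. Each of $A_1,A_2,A_3$ has equal diagonal entries, which makes left-multiplication by $A_d$ compatible with swapping the diagonal of the running product; nevertheless this compatibility must be verified directly for each case. This combinatorial coincidence is the Romik counterpart of the classical formula $q_{k-1}/q_k=[0;a_k,\ldots,a_1]$ and is exactly what makes the sum $\|P_k'\|+\|P_k''\|$ assemble itself as the denominator in Perron's formula.
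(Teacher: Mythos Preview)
Your proof is correct and takes a genuinely different route from the paper's.

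The paper stays in the $O(2,1)$ picture throughout: it manipulates the bilinear form $\langle\cdot,\cdot\rangle$ directly, introduces the auxiliary vector $\mathbf{w}_k=(M_{d_1}\cdots M_{d_k})^{-1}\mathbf{u}^{(0,1)}$ and its projectivization $W_k$, and then invokes the decomposition $M_d=HU_d$ of Lemma~\ref{OrthgonalityOfMd} to relate $W_k$ to $P_k''$ via the identity $\cot(\theta(W_k)/2)+\cot(\theta(P_k'')/2)=2$. You instead pass immediately to the $\mathrm{SL}_2^{\pm}(\mathbb{R})$ picture by parametrizing the null cone by $s=\|P\|$ and lifting each $M_d$ to a $2\times 2$ matrix $A_d$; these $A_d$ are exactly the $N_d$ of \eqref{EqDefNd}, and your ``diagonal-swap'' reversal identity is the second statement of Proposition~\ref{PropNdCheck} (both of which appear in the paper only \emph{after} Theorem~\ref{ThmPerron}). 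Your identity $c^{(0,1)}-b^{(0,1)}=r^2$ is the concrete substitute for the paper's $\langle\mathbf{z}_k,\mathbf{u}\rangle=\langle\mathbf{u},\mathbf{w}_k\rangle$ step.

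What each approach buys: the paper's version is coordinate-free and makes the role of the involution $H$ explicit, which fits the broader $O(2,1)$ framework of \S\ref{RomikWay}. Your version is more computational but also more self-contained, closer in spirit to the classical continued-fraction proof of Perron's formula (your observation that the reversal identity is the Romik analogue of $q_{k-1}/q_k=[0;a_k,\ldots,a_1]$ is exactly right), and it avoids the auxiliary point $W_k$ entirely. Both reductions ultimately hinge on the same combinatorial fact---that the $2\times 2$ lifts have equal diagonal entries, making the reversed product a diagonal swap of the original.
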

\begin{proof}
Fix an index $k\ge 1$, large enough that the sequence $\{d_1, \dots, d_k\}$ is equal to neither $1^k$ nor $3^k$.
In this proof we write (cf.~\eqref{EqDefU} and \eqref{V01})
\[
\mathbf{u} = \mathbf{u}^{(0, 1)} =
\begin{pmatrix}
0 \\ 1 \\ 1
\end{pmatrix}
\quad
\text{and}
\quad
\mathbf{z}_k = \mathbf{z}^{(0, 1)}
=
M_{d_1} \cdots M_{d_k}\mathbf{u}
\]
Also we define
\begin{equation}
\mathbf{w}_k =
\begin{pmatrix}
w_1 \\ w_2\\ w_3
\end{pmatrix}
=
(M_{d_1} \cdots M_{d_k})^{-1} \mathbf{u}
\end{equation}
and let $W_k = (w_1/w_3, w_2/w_3)$ be the corresponding point on the unit circle.
Using Lemma~\ref{PropDeltaSquared}, we have
\begin{equation}\label{ZerothTerm}
\begin{aligned}
    \delta^2(P; Z_k^{(0, 1)}(P)) &= 
    -2\Ht(Z_k^{(0, 1)}(P))
    \langle \mathbf{p}, \mathbf{z}_k \rangle \\
    &=
    -2\Ht(Z_k^{(0, 1)}(P))
    \frac{
    \langle \mathbf{p}, \mathbf{u} \rangle
    }{
    \langle \mathbf{z}_k, \mathbf{u} \rangle
    }
    \cdot
    \frac{
    \langle \mathbf{p}, \mathbf{z}_k \rangle
    }{
    \langle \mathbf{p}, \mathbf{u} \rangle
    }
    \cdot
    \langle \mathbf{z}_k, \mathbf{u} \rangle.
\end{aligned}
\end{equation}
Here, we set $\mathbf p = (\alpha, \beta, 1)$ for $P=(\alpha,\beta)$.
From Lemma~\ref{LemAngleEqDeltaPZ}, we see that
\begin{equation}\label{FirstTerm}
    \Ht(Z_k^{(0, 1)}(P))
    \frac{
    \langle \mathbf{p}, \mathbf{u} \rangle
    }{
    \langle \mathbf{z}_k, \mathbf{u} \rangle
    }
    =
	\frac{\sin^2(\theta(P)/2)}{\sin^2(\theta(Z_k^{(0, 1)})/2)}
	=
   	\epsilon_k^2(P).
\end{equation}
Write $P_k' = (\alpha_k', \beta_k')$ and let $\mathbf{p}_k' = (\alpha_k', \beta_k', 1)$.
Then Proposition~\ref{PropInvariance} shows
\begin{equation}\label{SecondTerm}
    \frac{
    \langle \mathbf{p}, \mathbf{z}_k \rangle
    }{
    \langle \mathbf{p}, \mathbf{u} \rangle
    }
    =
     \frac{
    \langle \mathbf{p}_k', M_{d_k}^{-1} \cdots M_{d_1}^{-1}\mathbf{z}_k \rangle
    }{
    \langle \mathbf{p}_k', M_{d_k}^{-1} \cdots M_{d_1}^{-1}\mathbf{u} \rangle
    }
   =
    \frac{
    \langle \mathbf{p}_k', \mathbf{u} \rangle
    }{
    \langle \mathbf{p}_k', \mathbf{w}_k \rangle
    }.
\end{equation}
Finally, we obtain from the orthogonality of $M_{d_1} \cdots M_{d_k}$ 
\begin{equation}\label{ThirdTerm}
    \langle \mathbf{z}_k, \mathbf{u} \rangle
    =
    \langle 
(M_{d_1} \cdots M_{d_k})^{-1} \mathbf{z}_k,
(M_{d_1} \cdots M_{d_k})^{-1} \mathbf{u}
\rangle
=
    \langle \mathbf{u}, \mathbf{w}_k \rangle.
\end{equation}
Combining \eqref{ZerothTerm}, \eqref{FirstTerm}, \eqref{SecondTerm}, \eqref{ThirdTerm}, we have
\begin{equation}\label{FifthTerm}
    \delta^2(P; Z_k^{(0, 1)}(P)) = 
    (-2)
    \epsilon^2_k(P)
    \frac{
    \langle \mathbf{p}_k', \mathbf{u} \rangle
    \langle \mathbf{w}_k, \mathbf{u} \rangle
    }{
    \langle \mathbf{p}_k', \mathbf{w}_k \rangle
    }
    .
\end{equation}
Use Lemma~\ref{LemAngleEqDeltaPZ} and some elementary trigonometry to obtain
\begin{equation}\label{EqPrePerron}
\begin{aligned}
    \delta^2(P; Z_k^{(0, 1)}(P)) &= 
    \epsilon^2_k(P)
    \frac{ {2^2}
    \sin^2(\theta(P_k')/2)\sin^2(\theta(W_k)/2)
    }{
    \sin^2(\theta(P_k', W_k)/2)
    } 
    \\
   &= 
   \epsilon_k^2(P)
   \left(
  \frac2{
 \cot(\theta(P_k')/2) - \cot(\theta(W_k)/2) 
  } 
   \right)^2.
    \end{aligned}
\end{equation}

To finish the proof of Theorem~\ref{ThmPerron}, we define
\begin{equation}\label{pkwk}
    \mathbf{p}_k'' 
    =
    \begin{pmatrix}
    p_1 \\ p_2 \\ p_3
    \end{pmatrix}
    = H \mathbf{w}_k.
\end{equation}
It is easy to see that $H\mathbf{u} = \mathbf{u}$ and therefore Lemma~\ref{OrthgonalityOfMd} gives
\[
\mathbf{p}_k'' = H \cdot U_{d_k}H \cdots U_{d_1} H\mathbf{u} 
=
M_{d_k} \cdots M_{d_1} \mathbf{u}.
\]
This shows that $P_k''$, whose Romik digit expansion is $[d_k, d_{k-1}, \dots, d_1, 3^{\infty}]_{\QQQ}$ by definition,
is indeed represented by the vector $\mathbf{p}_k''$.
Using \eqref{pkwk}, we can apply Lemma~\ref{LemRefereeRequest}  to obtain
\[
    \cot\left(\frac{\theta(W_k)}2\right)
    +
    \cot\left(\frac{\theta(P_k'')}2\right)
    =2.
\]
Hence \eqref{EqPrePerron} becomes
\begin{align*}
    \delta^2(P; Z_k^{(0, 1)}(P)) &= 
   \epsilon_k^2(P)
   \left(
  \frac2{
 \cot(\theta(P_k')/2) - \cot(\theta(W_k)/2) 
  } 
   \right)^2 \\
   &=
   \epsilon_k^2(P)
   \left(
  \frac2{
 \cot(\theta(P_k')/2) + \cot(\theta(P_k'')/2) - 2
  } 
   \right)^2 \\   
   &=
   \epsilon_k^2(P)
   \left(
  \frac{\sqrt2}{
\|P_k'\| + \|P_k''\|
  } 
   \right)^2.
\end{align*}
\end{proof}

\begin{corollary}\label{CorThmPerron}
	Let $P\in\QQQ$ be an irrational point.
	For each $k = 1, 2, \dots$, define $P'_k$ and $P''_k$ as in Theorem~\ref{ThmPerron}.
	Also, define $(P^{\vee})'_k$ and $(P^{\vee})''_k$ likewise for $P^{\vee}$.
	Then
	\[
		L(P) =
        \frac1{\sqrt2} 
        \limsup_{k\to\infty}
		\max
		\left\{
		    \| P_k' \| + \| P_k'' \| 
		    ,
		    \| (P^{\vee})_k' \| + \| (P^{\vee})_k'' \| 
		\right\}.
	\]
\end{corollary}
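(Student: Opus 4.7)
The plan is to derive Corollary~\ref{CorThmPerron} by feeding Perron's formula (Theorem~\ref{ThmPerron}) into Corollary~\ref{DeltaLiminfMin}. Concretely, Corollary~\ref{DeltaLiminfMin} already expresses $L(P)$ as
\[
L(P) = \limsup_{k\to\infty} \max\bigl\{ \delta(P; Z_k^{(0,1)}(P))^{-1}, \delta(P^{\vee}; Z_k^{(0,1)}(P^{\vee}))^{-1} \bigr\},
\]
so the only remaining task is to replace each $\delta^{-1}$ with the corresponding quantity $\tfrac{1}{\sqrt{2}}(\|P_k'\|+\|P_k''\|)$ up to an asymptotically negligible factor.

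First I would invoke Theorem~\ref{ThmPerron} (for all sufficiently large $k$) to write
\[
\delta(P; Z_k^{(0,1)}(P))^{-1} = \frac{\|P_k'\|+\|P_k''\|}{\sqrt{2}}\cdot \epsilon_k(P)^{-1},
\]
and apply the same formula to $P^{\vee}$ to obtain
\[
\delta(P^{\vee}; Z_k^{(0,1)}(P^{\vee}))^{-1} = \frac{\|(P^{\vee})_k'\|+\|(P^{\vee})_k''\|}{\sqrt{2}}\cdot \epsilon_k(P^{\vee})^{-1}.
\]
By the last sentence of Theorem~\ref{ThmPerron} we have $\epsilon_k(P)\to 1$ and $\epsilon_k(P^{\vee})\to 1$ as $k\to\infty$.

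Next I would substitute both expressions into the formula from Corollary~\ref{DeltaLiminfMin}. Because $\limsup$ is unchanged under multiplication by sequences tending to $1$ (apply this to each of the two arguments of the $\max$, noting continuity of $\max$), the factors $\epsilon_k(P)^{-1}$ and $\epsilon_k(P^{\vee})^{-1}$ can be absorbed; one can make this rigorous by bounding, for any $\eta>0$ and all large $k$, both $\epsilon_k(P)^{-1}$ and $\epsilon_k(P^{\vee})^{-1}$ between $1-\eta$ and $1+\eta$, and then letting $\eta\to 0$. This yields
\[
L(P) = \frac{1}{\sqrt{2}}\limsup_{k\to\infty} \max\bigl\{\, \|P_k'\|+\|P_k''\|,\ \|(P^{\vee})_k'\|+\|(P^{\vee})_k''\|\, \bigr\},
\]
which is the desired identity.

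There is essentially no obstacle in this proof beyond checking that the $\epsilon_k$-correction is benign inside a $\limsup$ of a maximum; once Theorem~\ref{ThmPerron} and Corollary~\ref{DeltaLiminfMin} are in hand, the corollary is a formal consequence. The only mild subtlety is that, a priori, $\|P_k'\|+\|P_k''\|$ could tend to infinity along a subsequence even when $L(P)=\infty$, but since $L(P)$ is defined as a $\limsup$ this poses no problem, and the equality simply asserts that the two potentially-infinite $\limsup$s coincide.
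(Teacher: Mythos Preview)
Your proof is correct and follows essentially the same approach as the paper, which simply records the result as ``an immediate consequence of Perron's formula and Proposition~\ref{ThmDeltaPcheck}'' (the latter being the ingredient behind Corollary~\ref{DeltaLiminfMin} that you invoke). Your explicit handling of the $\epsilon_k$ factors inside the $\limsup\max$ is exactly the routine verification the paper leaves implicit.
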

\begin{proof}
This is an immediate consequence of Perron's formula and Proposition~\ref{ThmDeltaPcheck}.
\end{proof}

\section{Doubly infinite admissible sequences}\label{SecAdmissibleSequences}
\label{SecRomikSequence}
Having established basic properties of Romik digit expansions in \S\ref{RomikWay}, we now focus on $P\in\QQQ$ with $L(P) \le 2$. 
Our presentation in \S\ref{SecRomikSequence} and \S\ref{SecCombinatorics} is a close adaptation of Bombieri's exposition in \cite{Bom07}.

\subsection{Infinite sequences and doubly infinite sequences}\label{SecInfiniteSeqDoublyInfiniteSeq}
Let $\AAA$ and $I$ be sets. 
We denote by $\AAA^I$ the set of all functions on $I$ with values in $\AAA$.
The set $I$ is understood to be an \emph{index set} and we will use the following three index sets: $\mathbb{N}$ (the set of all positive integers), $\mathbb{Z}_{\le 0}$ (the set of all nonpositive integers) and $\mathbb{Z}$.
To specify an element of
$\AAA^{\mathbb{N}}$,
$\AAA^{\mathbb{Z}_{\le0}}$, or
$\AAA^{\mathbb{Z}}$,
we list its values on the index set, that is,
\[
[a_1, a_2, \dots]\in\AAA^{\mathbb{N}},
\quad
[\dots, b_{-2}, b_{-1}, b_0]
\in\AAA^{\mathbb{Z}_{\le0}}
,
\quad
[\dots, c_{-1}, c_0, c_1, \dots]
\in
\AAA^{\mathbb{Z}}
\]
with $a_i, b_j, c_k \in \AAA$.
Any element of $\AAA^{\mathbb{N}}$ is called a \emph{sequence (with values in $\AAA$) infinite to the right}.
Likewise, an element of $\AAA^{\mathbb{Z}_{\le0}}$ is called a \emph{sequence infinite to the left}.

Suppose that $E=[e_1, e_2, \dots] \in \AAA^{\mathbb{N}}$.
The \emph{reverse} $E^*$ of $E$ is the element of $\AAA^{\mathbb{Z}_{\le0}}$ obtained by writing $E$ backwards, that is,
\[
E^*= [ \dots, e_2, e_1] \in \AAA^{\mathbb{Z}_{\le0}}.
\]
When $E=[e_1, e_2, \dots]$ and $F = [f_1, f_2, \dots]$ are elements in $\AAA^{\mathbb{N}}$, we can construct an element of $\AAA^{\mathbb{Z}}$ by concatenating $E^*$ and $F$.
Namely, we define $E^*|F$ to be the element of $\AAA^{\mathbb{Z}}$ whose value at $k$ is given by
\[
\begin{cases}
f_k & \text{ if } k > 0,\\
e_{1-k} & \text{ if } k \le 0.
\end{cases}
\]
Conversely, any element of $\AAA^{\mathbb{Z}}$ is written by $E^*|F$ for some $E, F\in \AAA^{\mathbb{N}}$.

We define an equivalence relation on $\AAA^{\mathbb{Z}}$ as follows.
Say that 
\[[\dots, c_{-1}, c_0, c_1, \dots] \sim [\dots, c_{-1}', c_0', c_1', \dots]\]
if there exists a (fixed) $k$ such that $c_j = c_{j+k}'$ for all $j \in \mathbb{Z}$.
We call an equivalence class in $\AAA^{\mathbb{Z}}$ a \emph{doubly infinite sequence} (with values in $\AAA$). 
A \emph{section} of a doubly infinite sequence will mean an element in the equivalence class.
If $C$ is a double infinite sequence, one of whose sections is $[\dots, c_{-1}, c_0, c_1, \dots]$, then we often abuse notation to write
\[
C = [\dots, c_{-1}, c_0, c_1, \dots]
\]
whenever there is no danger of confusion.
For any doubly infinite sequence $C=[\dots, c_{-1}, c_0, c_1, \dots]$ we define its \emph{reverse} $C^*$ to be $C^* = [\dots, c_1, c_0, c_{-1}, \dots]$.

In the present section, we will consider sequences with values in $\AAA = \{ 1, 2, 3 \}$, which we call \emph{Romik sequences}.

\subsection{Definitions}
To characterize $P\in \QQQ$ with $L(P)\le2$, we will study combinatorial properties of digit sequences of such $P$.
Therefore it will be convenient for us to identify $P$ with an element in $\{ 1, 2, 3\}^\mathbb{N}$ 
using the digit expansion of $P$ we introduced in \S\ref{RomikWay}, so that
\[
	P= [d_1, d_2, \dots] \in \{ 1, 2, 3 \}^{\mathbb{N}}.
\]
For each such $P$, we define
\[
	P^{\vee} = [d_1^{\vee}, d_2^{\vee}, \dots]\in \{ 1, 2, 3 \}^{\mathbb{N}}
\]
where
\begin{equation}\label{ConjugateDigit}
	d_j^{\vee} = \begin{cases}
		3 & \text{ if } d_j = 1,\\ 
		2 & \text{ if } d_j = 2,\\ 
		1 & \text{ if } d_j = 3.\\ 
	\end{cases}
\end{equation}
Note that this definition is compatible with Definition~\ref{DefinitionPCheck}.
Also, for a doubly infinite Romik sequence $T$, we can define $T^{\vee}$ in a similar way. 
That is, if $T = [\dots, t_{-1}, t_0,t_1,\dots]$, then $T^{\vee}$ is defined to be
\[
T^{\vee} = [\dots, t_{-1}^{\vee}, t_0^{\vee}, t_{1}^{\vee}, \dots].
\]
For $P, Q \in \{ 1, 2, 3\}^{\mathbb{N}}$, 
we define $L(P^*|Q)$ to be
\begin{equation}\label{EqDefLSection}
L(P^*|Q) =
		\frac{\| P \| + \| Q \|  }{\sqrt2}.
\end{equation}
(See Corollary~\ref{CorThmPerron}.)
Finally, for a doubly infinite Romik sequence $T$, we define
\emph{the Lagrange number of} $T$ to be
\begin{equation}\label{DefinitionL}
	L(T) = 
	\sup_{P^*|Q } 
	 \left\{
	 \max\left(
	L(P^* | Q),
	L((P^{\vee})^* | Q^{\vee})\right)
	\right\}
\end{equation}
where $\{P^* | Q\}$ runs over all sections of $T$.
It is not difficult to show that the definition \eqref{DefinitionL} is equivalent to
\begin{equation}\label{DefinitionL2}
	L(T) = 
	\max \left\{
	\sup_{P_1^*|Q_1 } L(P_1^* | Q_1),
	\sup_{P_2^*|Q_2 } L(P_2^* | Q_2),
	\right\}
\end{equation}
where $\{P_1^* | Q_1\}$ and $\{P_2^*|Q_2 \}$ run over all sections of $T$ and $T^{\vee}$, respectively.
Also, the definition of $L(P^*| Q)$ is symmetric in $P$ and $Q$.
Therefore, 
\begin{equation}\label{LInvariance}
	L(T) = L(T^*) = L(T^{\vee}) = L((T^{\vee})^*).
\end{equation}

We say that $T$ is \emph{admissible} if $L(T) \le 2$, 
and \emph{strongly admissible} if $L(T) < 2$.
From \eqref{LInvariance}, we see that $T$ is admissible (or strongly admissible) if and only if any one of $\{ T,T^*, T^{\vee}, (T^{\vee})^*\}$ is admissible (or strongly admissible).

\begin{proposition}[Bombieri's trick]\label{PropBombieriTrick}
   Suppose that $P \in \{ 1, 2, 3\}^{\mathbb{N}}$. Then there exists a doubly infinite Romik sequence $T$ with $L(P) = L(T)$. 
\end{proposition}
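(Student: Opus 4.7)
The plan is to realise $T$ as a subsequential pointwise limit in $\{1,2,3\}^{\mathbb{Z}}$ of the doubly infinite concatenations $S_k=(P_k'')^{*}\mid P_k'$ built from finite prefixes of $P$, and to check the equality $L(T)=L(P)$ via Perron's formula. By Corollary~\ref{CorThmPerron},
\[
\sqrt{2}\,L(P)\;=\;\limsup_{k\to\infty}\max\bigl\{\,\|P_k'\|+\|P_k''\|,\;\|(P^{\vee})_k'\|+\|(P^{\vee})_k''\|\,\bigr\}.
\]
Explicitly, the entry of $S_k$ at position $j\geq 1$ is $p_{k+j}$, and at position $j\leq 0$ it is $p_{k+j}$ when $k+j\geq 1$ and $3$ otherwise. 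Pick a subsequence $k_n\to\infty$ realising the $\limsup$. Since \eqref{LInvariance} gives $L(T)=L(T^{\vee})$, I may (after a further subsequence and possibly swapping $P\leftrightarrow P^{\vee}$) arrange that $\|P_{k_n}'\|+\|P_{k_n}''\|\to\sqrt{2}\,L(P)$.

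By the Tychonoff compactness of $\{1,2,3\}^{\mathbb{Z}}$ in the product topology, a diagonal extraction produces a subsequence (still called $\{k_n\}$) and a doubly infinite sequence $T=[\ldots,t_{-1},t_0\mid t_1,t_2,\ldots]$ with $S_{k_n}\to T$ pointwise. For each integer $m$, the $m$-th section of $T$ has right half $Q^{(m)}=[t_{m+1},t_{m+2},\ldots]$ and left half $P^{(m)}=[t_m,t_{m-1},\ldots]$, and pointwise convergence identifies these as the coordinate-wise limits of $P_{k_n+m}'$ and $P_{k_n+m}''$ respectively; the same identification holds for their $\vee$-images. The Romik coding $\{1,2,3\}^{\mathbb{N}}\to\QQQ$ is continuous in the product topology, and composing with the stereographic projection of Definition~\ref{DefNormRomikSequence} (extended to $[0,\infty]$ at the fibre over $(0,1)$) yields a continuous map, so
\[
\|P^{(m)}\|+\|Q^{(m)}\|\;=\;\lim_{n\to\infty}\bigl(\|P_{k_n+m}''\|+\|P_{k_n+m}'\|\bigr),
\]
with the analogous formula for the $\vee$-versions.

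Setting $m=0$ and invoking the normalisation of the subsequence yields $\|P^{(0)}\|+\|Q^{(0)}\|=\sqrt{2}\,L(P)$, so $L(T)\geq L\bigl((P^{(0)})^{*}\mid Q^{(0)}\bigr)=L(P)$. For general $m$ the defining $\limsup$ bounds each of $\|P_k'\|+\|P_k''\|$ and $\|(P^{\vee})_k'\|+\|(P^{\vee})_k''\|$ by $\sqrt{2}\,L(P)+o(1)$ as $k\to\infty$; passing to the limit then shows that every section of $T$, and of $T^{\vee}$, contributes at most $L(P)$ to the supremum defining $L(T)$. Hence $L(T)\leq L(P)$. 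The chief technical delicacy is the continuity of $\|\cdot\|$ at $[3^{\infty}]$, where the stereographic projection diverges; this is absorbed by working in $[0,\infty]$ and adopting the convention $L(P^{*}\mid Q)=+\infty$ when a summand is infinite, which also covers the degenerate case $L(P)=+\infty$ uniformly.
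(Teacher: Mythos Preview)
Your argument is correct and is precisely the compactness argument the paper has in mind when it writes ``The proof is nearly identical to Bombieri's argument in page 191 of \cite{Bom07} and we omit it.'' There is nothing to compare against, since the paper gives no details.

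One small point of bookkeeping worth tightening: when you pass to the $\vee$-versions, note that $(P_k'')^{\vee}=[d_k^{\vee},\dots,d_1^{\vee},1^{\infty}]$ is not literally the same sequence as $(P^{\vee})_k''=[d_k^{\vee},\dots,d_1^{\vee},3^{\infty}]$, since the artificial tail flips. This does not affect your argument, because both sequences converge coordinate-wise to $(P^{(m)})^{\vee}$ as $n\to\infty$ (the differing tail is pushed past every fixed index), so by the continuity you already invoked,
\[
\|(P^{(m)})^{\vee}\|+\|(Q^{(m)})^{\vee}\|=\lim_{n\to\infty}\bigl(\|(P^{\vee})_{k_n+m}''\|+\|(P^{\vee})_{k_n+m}'\|\bigr),
\]
and now the $\limsup$ bound from Corollary~\ref{CorThmPerron} applies directly. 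With that clarification the upper bound $L(T)\le L(P)$ goes through cleanly.
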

\begin{proof}
The proof is nearly identical to Bombieri's argument in page 191 of \cite{Bom07} and we omit it.
\end{proof}

\subsection{The Romik system on nonnegative real numbers}

We let $\mathrm{GL}_2(\mathbb{R})$ act on $\mathbb{R} \cup \{ \infty \}$ via fractional linear action, that is,
\begin{equation}\label{EqFractionalLinearAction}
\begin{pmatrix}
a & b \\ c & d
\end{pmatrix}
\cdot t 
=
\frac{at + b}{ct + d}.
\end{equation}
Recall that we defined the dynamical system $([0,\infty], \TTT_{[0,\infty]})$ in \eqref{EqTreal}, which is conjugate to $(\QQQ, \TTT)$ via the map $P \mapsto \| P \|$ as in \eqref{EqCommuteDiagram}.
The following proposition shows that the actions of
\begin{equation}\label{EqDefNd}
    N_1 =
    \begin{pmatrix}
    1 & 0 \\
    \sqrt2 & 1 \\
    \end{pmatrix},
    \quad
    N_2 =
    \begin{pmatrix}
    1 & \sqrt2 \\
    \sqrt2 & 1 \\
    \end{pmatrix},
    \quad
    N_3 =
    \begin{pmatrix}
    1 & \sqrt2 \\
    0 & 1 \\
  \end{pmatrix}.
\end{equation}
on $[0, \infty]$ correspond to the inverse branches of $\TTT_{[0, \infty]}$, playing the role of $M_d$ on $\QQQ$ (cf.~Proposition~\ref{DigitsAndMatrixMultiplication}).

\begin{proposition}\label{PropNormShift}
Let $P\in \{1,2,3\}^{\mathbb{N}}$ and let $[d, P]$ be the element of $\{ 1,2,3\}^{\mathbb{N}}$ obtained by concatenating a digit $d$ and $P$. Then
\[
\| [d, P] \| = N_d \cdot  \| P \|
\]
for $d= 1, 2, 3$.
\end{proposition}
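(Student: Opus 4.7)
The plan is to exploit the commutative diagram \eqref{EqCommuteDiagram}: the stereographic projection $P\mapsto\|P\|$ intertwines $\TTT$ with $\TTT_{[0,\infty]}$. Concretely, if $Q\in\QQQ$ is the point attached to the sequence $[d,P]$, then $\TTT(Q)$ is the point attached to $P$, so \eqref{EqCommuteDiagram} yields $\TTT_{[0,\infty]}(\|[d,P]\|)=\|P\|$. It therefore suffices to show that $N_d\cdot\|P\|$ is the unique preimage of $\|P\|$ under $\TTT_{[0,\infty]}$ that lies in the subinterval of $[0,\infty]$ corresponding to Romik digit $d$.

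First I would identify the three branches of $\TTT_{[0,\infty]}^{-1}$. Comparing the piecewise formula \eqref{EqTreal} with the fractional linear action \eqref{EqFractionalLinearAction} of the inverses
\[
N_1^{-1}=\begin{pmatrix}1&0\\-\sqrt2&1\end{pmatrix},\qquad
N_2^{-1}=\begin{pmatrix}-1&\sqrt2\\\sqrt2&-1\end{pmatrix},\qquad
N_3^{-1}=\begin{pmatrix}1&-\sqrt2\\0&1\end{pmatrix},
\]
one sees that $\TTT_{[0,\infty]}$ coincides with the fractional linear action of $N_d^{-1}$ on the branch indexed by $d$. In particular, $\TTT_{[0,\infty]}\circ N_d=\mathrm{id}$ on $[0,\infty]$ as fractional linear maps.

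Next I would pin down the interval of $[0,\infty]$ that corresponds under $P\mapsto\|P\|$ to the cylinder $\Cyl(d)$. A short computation from Definition~\ref{DefNormRomikSequence} gives $\|(\tfrac45,\tfrac35)\|=1/\sqrt2$ and $\|(\tfrac35,\tfrac45)\|=\sqrt2$, so under the stereographic projection $\Cyl(1)$, $\Cyl(2)$, $\Cyl(3)$ correspond to $[0,1/\sqrt2]$, $[1/\sqrt2,\sqrt2]$, $[\sqrt2,\infty]$ respectively. Evaluating $N_d$ at the endpoints $0$ and $\infty$ and using monotonicity of fractional linear maps, one sees that $N_d$ sends $[0,\infty]$ bijectively onto precisely the interval attached to digit $d$.

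Combining these steps: the point attached to $[d,P]$ lies in $\Cyl(d)$ by definition of the Romik digit expansion, so $\|[d,P]\|$ lies in the corresponding interval; and $N_d\cdot\|P\|$ lies in that same interval by the previous paragraph. Both are preimages of $\|P\|$ under $\TTT_{[0,\infty]}$, which is injective on each branch, so they coincide. The main obstacle is essentially trivial---the whole argument reduces to one matrix computation per digit, verifying that the explicit $N_d$ inverts the appropriate branch of $\TTT_{[0,\infty]}$; the rest is bookkeeping of ranges.
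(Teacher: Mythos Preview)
Your argument is correct and is exactly the approach the paper has in mind: the paper's ``proof'' is just the sentence ``This is a simple consequence of straightforward calculation based on the above commutative diagram and we omit the proof,'' and you have supplied precisely those omitted calculations---identifying each branch of $\TTT_{[0,\infty]}$ with the fractional linear action of $N_d^{-1}$ and checking that $N_d$ carries $[0,\infty]$ onto the image of $\Cyl(d)$.
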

\begin{proof}
This is a simple consequence 
of straightforward calculation based on the commutative diagram in \eqref{EqCommuteDiagram} and we omit the proof.
\end{proof}

One easily checks that $\| (\frac 35, \frac45) \| = 1/\sqrt2$
and $\| (\frac 45, \frac35) \| = \sqrt2$, thus the cylinder sets $\Cyl(1), \Cyl(2), \Cyl(3)$ are projected homeomorphically onto the intervals 
$[0, \frac1{\sqrt2}]$,
$[\frac1{\sqrt2}, \sqrt{2}]$,
$[\sqrt{2}, \infty]$, as shown in 
Figure~\ref{FigCylindersOnReals}.
\begin{figure}
    \begin{center}
        \begin{tikzpicture}[xscale = 2.3]
            \draw[|-|] (0, 0) 
            node[below]{$0$}
            -- (0.707, 0)
            node[midway, above]{$\| \Cyl(1) \|$}
            node[below]{$\tfrac{1}{\sqrt2}$};
            \draw[|-|] (0.707, 0)
            -- (1.414, 0)
            node[midway, above]{$\| \Cyl(2) \|$}
            node[below]{$\tfrac{\sqrt2}1$};
            \draw[|-|] (1.414, 0)
            -- (4.243, 0)
            node[midway, above]{$\| \Cyl(3) \|$}
            node[below]{$\infty$}
            ;
        \end{tikzpicture}
    \end{center}
    \caption{Images of cylinder sets $\Cyl(1), \Cyl(2), \Cyl(3)$ under the stereographic projection $P \mapsto \| P \|$.}
    \label{FigCylindersOnReals}
\end{figure}
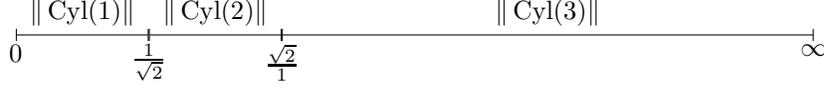
In particular, if $P_d \in \Cyl(d)$ for $d = 1, 2, 3$, it follows that
\begin{equation}\label{EqMonotonicity}
\| P_1 \| \le \| P_2 \| \le \| P_3 \|.
\end{equation}
Another easy observation is that
$\| P \| \le \| Q \|$ if and only if any one of the following three inequalities holds:
\begin{equation}\label{EqDigitMonotonicity}
\begin{gathered}
N_1 \cdot 
\| P \| \le 
N_1 \cdot 
\| Q \|,
\qquad
N_3 \cdot 
\| P \| \le 
N_3 \cdot 
\| Q \|, \\
N_2 \cdot 
\| P \| \ge 
N_2 \cdot 
\| Q \|.
\end{gathered}
\end{equation}
This follows from \eqref{EqFractionalLinearAction} and the definition \eqref{EqDefNd} of $N_d$.
\begin{proposition}\label{PropDigitComparison}
Suppose $P, Q\in \{ 1,2, 3\}^{\mathbb{N}}$
with 
\[
P = [d_1, \dots, d_k, d_P, \dots]
\text{ and }
Q = [d_1, \dots, d_k, d_Q, \dots]
\]
where $d_P < d_Q$ and $\| P \| \neq \|Q \|$.
If  $[d_1,\dots, d_k]$ contains an even number of $2$'s,  then $\| P \| < \| Q \|$. 
If $[d_1,\dots, d_k]$ contains an odd number of $2$'s, then $\| P \| > \| Q \|$. 
\end{proposition}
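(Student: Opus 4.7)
The proof should go by induction on $k$, exploiting the fact displayed in \eqref{EqDigitMonotonicity}: the fractional linear actions of $N_1$ and $N_3$ preserve the order of $\|\cdot\|$, while that of $N_2$ reverses it (this is immediate from $\det N_1 = \det N_3 = 1$ and $\det N_2 = -1$ together with the form of the matrices).

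\textbf{Base case $k=0$.} Here $[d_1,\dots,d_k]$ is empty, so it contains an even number (zero) of $2$'s, and the claim reduces to showing $\|P\|<\|Q\|$ whenever $P$ begins with $d_P$, $Q$ begins with $d_Q$, $d_P<d_Q$, and $\|P\|\neq\|Q\|$. Since $P\in\Cyl(d_P)$ and $Q\in\Cyl(d_Q)$, the images $\|P\|,\|Q\|$ lie in the intervals described in Figure~\ref{FigCylindersOnReals}, which are ordered by the digit and overlap only at endpoints. Combined with the monotonicity relation \eqref{EqMonotonicity}, the assumption $\|P\|\neq\|Q\|$ upgrades the weak inequality to a strict one.

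\textbf{Inductive step.} Assume the statement for common prefixes of length $k-1$. Write $P=[d_1,P']$ and $Q=[d_1,Q']$, where $P'=[d_2,\dots,d_k,d_P,\dots]$ and $Q'=[d_2,\dots,d_k,d_Q,\dots]$ share the common prefix $[d_2,\dots,d_k]$. By Proposition~\ref{PropNormShift},
\[
\|P\|=N_{d_1}\cdot\|P'\|,\qquad \|Q\|=N_{d_1}\cdot\|Q'\|.
\]
If $d_1\in\{1,3\}$, the number of $2$'s in $[d_1,\dots,d_k]$ has the same parity as in $[d_2,\dots,d_k]$, and $N_{d_1}$ is order-preserving on $[0,\infty]$ by \eqref{EqDigitMonotonicity}; the induction hypothesis applied to $P',Q'$ then gives the desired inequality between $\|P\|$ and $\|Q\|$. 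If $d_1=2$, the parity flips, and $N_2$ reverses order, so the two sign changes cancel and the induction hypothesis again yields the claim.

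The only real subtlety is the base case — one must be sure that the assumption $\|P\|\neq\|Q\|$ is what rules out the degenerate possibility that both $\|P\|$ and $\|Q\|$ equal a common boundary point of the intervals in Figure~\ref{FigCylindersOnReals} (which happens precisely when both sequences are variant digit expansions of the same rational point); after that, the inductive step is purely mechanical, with the parity bookkeeping matching exactly the orientation-reversing behavior of $N_2$.
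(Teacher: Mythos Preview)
Your proof is correct and is precisely the natural unpacking of the paper's one-line argument, which cites the same three ingredients (Proposition~\ref{PropNormShift}, \eqref{EqMonotonicity}, and \eqref{EqDigitMonotonicity}). The only small omission is that in the inductive step you should note that $\|P'\|\neq\|Q'\|$ follows from $\|P\|\neq\|Q\|$ since $N_{d_1}$ acts as a bijection of $[0,\infty]$, so the induction hypothesis applies.
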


\begin{proof}   
This is immediate from
Proposition~\ref{PropNormShift}, 
\eqref{EqMonotonicity}, and
\eqref{EqDigitMonotonicity}.
\end{proof}

\begin{proposition}
  Suppose $P, Q\in \{ 1,2, 3\}^{\mathbb{N}}$.
  We have $\| P \| \le \| Q \|$ if and only if $\| Q^{\vee} \| \le \| P^{\vee} \|$.
  \label{prop:check_order_reversing}
\end{proposition}
\begin{proof}
  If $\| P \| = \| Q \|$ then $\| P^{\vee} \| = \| Q^{\vee} \|$ and the proposition is trivially true.
  Assume now $\| P \| \neq \| Q \|$.
  Let $[d_1, \dots, d_k]$ be the longest common prefix (possibly empty) of $P$ and $Q$, so that 
  we write
\[
P = [d_1, \dots, d_k, d_P, \dots]
\ \text{ and } \
Q = [d_1, \dots, d_k, d_Q, \dots]
\]
with $d_P \neq d_Q$, as in Proposition~\ref{PropDigitComparison}.
Then we have $d_P < d_Q$ if and only if $d_P^{\vee}> d_Q^{\vee}$ (cf.~Definition~\ref{DefinitionPCheck}).
Since $[d_1, \dots, d_k]$ and $[d_1^{\vee}, \dots, d_k^{\vee}]$ contain the same number of 2's, the statement of the proposition now follows from this and Proposition~\ref{PropDigitComparison}.
\end{proof}

\begin{proposition}\label{PropRealCylinder}
Fix a finite sequence of Romik digits $[d_1, \dots, d_k]$ and let
\[
    \begin{pmatrix}
    p & p' \\
    q & q' \\
    \end{pmatrix}
    =
    N_{d_1} \cdots N_{d_k}.
\]
Assume that $[d_1, \dots, d_k]$ contains an even number of 2's.
Then $P \in \Cyl(d_1, \dots, d_k)$ if and only if
\[
\frac{p'}{q'} \le \| P \| \le \frac pq.
\]
Now, assume that $[d_1, \dots, d_k]$ contains an odd number of 2's.
Then $P \in \Cyl(d_1, \dots, d_k)$ if and only if
\[
\frac pq \le \| P \| \le \frac{p'}{q'}.
\]
\end{proposition}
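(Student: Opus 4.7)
The plan is to identify the image of the cylinder set $\Cyl(d_1, \dots, d_k)$ under the stereographic projection $\|\cdot\|$ from Definition~\ref{DefNormRomikSequence} and show that it is precisely the closed interval described in the statement. Recall from \S\ref{SecCylinder} that $\Cyl(d_1, \dots, d_k)$ is a closed subarc of $\QQQ$ whose two boundary points are $Z^{(1,0)} = [d_1, \dots, d_k, 1^\infty]_{\QQQ}$ and $Z^{(0,1)} = [d_1, \dots, d_k, 3^\infty]_{\QQQ}$. Since the points $(1, 0) = [1^\infty]_{\QQQ}$ and $(0, 1) = [3^\infty]_{\QQQ}$ are fixed by $\TTT$, a direct calculation from the formula for $\|\cdot\|$ gives $\|(1, 0)\| = 0$ and $\|(0, 1)\| = \infty$.

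First, I would apply Proposition~\ref{PropNormShift} repeatedly to compute the norms of the boundary points as fractional linear images under $N_{d_1} \cdots N_{d_k}$: namely,
\[
\|Z^{(1,0)}\| = N_{d_1} \cdots N_{d_k} \cdot 0 = \frac{p'}{q'}, \qquad \|Z^{(0,1)}\| = N_{d_1} \cdots N_{d_k} \cdot \infty = \frac{p}{q}.
\]
Next, one checks directly from Definition~\ref{DefNormRomikSequence} that $P \mapsto \|P\|$ is a homeomorphism from $\QQQ$ onto $[0, \infty]$ (it is a continuous strictly monotone function of $\theta(P)$ on $[0, \pi/2]$). Since $\Cyl(d_1, \dots, d_k)$ is a closed subarc of $\QQQ$, its image under $\|\cdot\|$ is the closed interval in $[0, \infty]$ bounded by the two values above, and so $P \in \Cyl(d_1, \dots, d_k)$ if and only if $\|P\|$ lies between $p'/q'$ and $p/q$.

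The last step is to decide which of $p'/q'$ and $p/q$ is the smaller endpoint, and this is where the parity condition enters. One simply applies Proposition~\ref{PropDigitComparison} to the two sequences $[d_1, \dots, d_k, 1, 1, \dots]$ and $[d_1, \dots, d_k, 3, 3, \dots]$, which agree on their first $k$ digits and first differ at position $k+1$ with $1 < 3$. An even number of $2$'s among $d_1, \dots, d_k$ forces $\|Z^{(1,0)}\| < \|Z^{(0,1)}\|$, that is, $p'/q' < p/q$; an odd number of $2$'s reverses the inequality. Combined with the preceding step, this yields the two cases of the proposition.

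The computation of the boundary endpoints is routine once Proposition~\ref{PropNormShift} is in hand, and the identification of the arc with an interval is topological. The only genuine content is the orientation bookkeeping, which reflects the fact that $\det N_2 = -1$ while $\det N_1 = \det N_3 = +1$; fortunately this has already been packaged in Proposition~\ref{PropDigitComparison}, so no new work is required here.
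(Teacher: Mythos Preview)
Your proof is correct and follows essentially the same route as the paper's own proof: compute $\|Z^{(1,0)}\|$ and $\|Z^{(0,1)}\|$ via Proposition~\ref{PropNormShift}, invoke Proposition~\ref{PropDigitComparison} for the parity-dependent ordering, and appeal to the fact that the stereographic projection is a homeomorphism. Your closing remark about $\det N_2=-1$ versus $\det N_1=\det N_3=+1$ is a nice gloss on why the orientation flips, though the paper does not spell this out.
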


\begin{proof}
As in \eqref{EqZboundary} of \S\ref{SecCylinder}, let
\[
Z^{(1, 0)} = [d_1, \dots, d_k, 1^{\infty}], \qquad
Z^{(0, 1)} = [d_1, \dots, d_k, 3^{\infty}].
\]
Since $\| [1^{\infty}] \|  =  \| (1, 0) \| = 0$ and $\| [3^{\infty}] \|= \| (0, 1) \| = \infty$, Proposition~\ref{PropNormShift} yields
\begin{align*}
\| Z^{(1, 0)} \| &= 
N_{d_1}\cdots N_{d_k} \cdot \| [1^{\infty}] \| 
=
N_{d_1}\cdots N_{d_k} \cdot 0
= \frac{p'}{q'}, \\
\| Z^{(0, 1)} \| &= 
N_{d_1}\cdots N_{d_k} \cdot \| [3^{\infty}] \| 
=
N_{d_1}\cdots N_{d_k} \cdot \infty
= \frac{p}{q}. 
\end{align*}
On the other hand,  
we see from Proposition~\ref{PropDigitComparison} that, 
if $[d_1,\dots, d_k]$ contains an even number of 2's,
then 
$
\| Z^{(1, 0)} \|
\le 
\| Z^{(0, 1)} \|,
$
therefore,
$\frac{p'}{q'} \le \frac pq$.
Since the stereographic projection is a homeomorphism, the cylinder set $\Cyl(d_1, \dots, d_k)$ must be mapped onto the interval $
[\frac{p'}{q'} , \frac pq]
$
and the proposition follows from this.
The proof for the case when $[d_1,\dots, d_k]$ contains an odd number of 2's is similar.
\end{proof}
Let 
\begin{equation}\label{EqDefJ}
J = 
    \begin{pmatrix}
    0 & 1 \\
    1 & 0 \\
    \end{pmatrix}.
\end{equation}
Then $J^2 = I_2$ (the $2\times 2$ identity matrix) and 
\begin{equation}\label{LemJ}
J 
    \begin{pmatrix}
    a & b \\
    c & d \\
    \end{pmatrix}
J
=
    \begin{pmatrix}
    d & c \\
    b & a \\
    \end{pmatrix}
\end{equation}
for any $\left( \begin{smallmatrix}
a & b \\ c & d 
\end{smallmatrix} \right)$.
In particular, $JN_d J = N_{d^{\vee}}$ for $d = 1, 2, 3$. (cf.~\eqref{EqDefNd})
\begin{proposition}\label{PropNdCheck}
Fix a finite sequence of digits $[d_1, \dots, d_k]$ and let
\[
    \begin{pmatrix}
    p & p' \\
    q & q' \\
    \end{pmatrix}
    =
    N_{d_1} \cdots N_{d_k}.
\]
Then we have
\[
    N_{d^{\vee}_1} \cdots N_{d^{\vee}_k}
    =
    \begin{pmatrix}
    q' & q \\
    p' & p \\
    \end{pmatrix}
\quad
\text{ and }
\quad
    N_{d_k} \cdots N_{d_1}
    =
    \begin{pmatrix}
    q' & p' \\
    q & p \\
    \end{pmatrix}.
\]
\end{proposition}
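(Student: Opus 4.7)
The plan is to derive both identities from two elementary symmetries of the generating matrices $\{N_1, N_2, N_3\}$ introduced in \eqref{EqDefNd}: the conjugation by $J$, which was noted in the paragraph before the proposition, and the transposition operation, which I will check intertwines the matrices with the $d \mapsto d^{\vee}$ involution.

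For the first identity, the relation $JN_dJ = N_{d^{\vee}}$ combined with $J^2 = I_2$ telescopes across a product:
\[
N_{d_1^{\vee}} \cdots N_{d_k^{\vee}} = (JN_{d_1}J)(JN_{d_2}J)\cdots(JN_{d_k}J) = J\bigl(N_{d_1}\cdots N_{d_k}\bigr)J.
\]
Applying \eqref{LemJ} to the matrix $\left(\begin{smallmatrix} p & p' \\ q & q' \end{smallmatrix}\right)$ gives $\left(\begin{smallmatrix} q' & q \\ p' & p \end{smallmatrix}\right)$, which is exactly the asserted form.

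For the second identity, one checks directly from \eqref{EqDefNd} that $N_d^T = N_{d^{\vee}}$ for each $d \in \{1,2,3\}$: the matrix $N_2$ is symmetric and fixed by $\vee$, while $N_1^T = N_3$ and $N_3^T = N_1$, matching $1^{\vee}=3$ and $3^{\vee}=1$. Consequently, taking transposes reverses the order of a product of $N_d$'s while simultaneously applying $\vee$ to each factor:
\[
N_{d_k}\cdots N_{d_1} = \bigl(N_{d_1}^T \cdots N_{d_k}^T\bigr)^T = \bigl(N_{d_1^{\vee}} \cdots N_{d_k^{\vee}}\bigr)^T.
\]
Plugging in the first identity and transposing $\left(\begin{smallmatrix} q' & q \\ p' & p \end{smallmatrix}\right)$ yields $\left(\begin{smallmatrix} q' & p' \\ q & p \end{smallmatrix}\right)$, as required.

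There is really no serious obstacle here; the proof is a matter of recognizing the two symmetries and chaining them. The only point that deserves a sentence in the write-up is the verification $N_d^T = N_{d^{\vee}}$, which is not stated explicitly in the text preceding the proposition but is immediate from the definitions. With that in hand, both identities fall out in a few lines.
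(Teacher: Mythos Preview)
Your proof is correct and follows essentially the same route as the paper: the first identity via the telescoping conjugation $JN_dJ=N_{d^{\vee}}$ together with \eqref{LemJ}, and the second by combining the first with the observation $N_d^{t}=N_{d^{\vee}}$ (equivalently, the paper's $N_{d^{\vee}}^{t}=N_d$) and transposing.
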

\begin{proof}
Using \eqref{LemJ}, we have
\[
N_{d_1^{\vee}}
\cdots
N_{d_k^{\vee}}
= J
N_{d_1}
\cdots
N_{d_k}
J
=
J
    \begin{pmatrix}
    p & p' \\
    q & q' \\
    \end{pmatrix}
    J
    =
    \begin{pmatrix}
    q' & q \\
    p' & p \\
    \end{pmatrix}
    ,
\]
which proves the first assertion.
For the second, we take the transpose of the above equation to obtain
\[
    \begin{pmatrix}
    q' & p' \\
    q & p \\
    \end{pmatrix}
=
(
N_{d_1^{\vee}}
\cdots
N_{d_k^{\vee}}
)^t
=
N_{d_k^{\vee}}^t
\cdots
N_{d_1^{\vee}}^t.
\]
However, \eqref{EqDefNd} shows $N_{d^{\vee}}^t = N_d$ for $d = 1, 2, 3$ and this establishes the second assertion of the proposition.
\end{proof}

\begin{remark}
  One of the reasons why we choose to modify the standard stereographic projection as in Definition~\ref{DefNormRomikSequence}
  is to obtain the identity $N_{d^{\vee}}^t = N_d$ for $d = 1, 2, 3$.
  This also lets us obtain a cleaner formula in Proposition~\ref{PropLpurelyPeriodic} below.
  \label{rem:stereo_graphic_projection_justification}
\end{remark}

To finish the subsection, we prove a formula for $L(P^*|Q)$ when $P^*|Q$ is a section of a \emph{purely periodic} doubly infinite sequence.
Fix a finite digit sequence $\Pi = [d_1, \dots, d_k]$
and let $T$ be a doubly infinite Romik sequence obtained by repeating $\Pi$ to both directions, that is,
\[
T = [\dots, d_k, d_1, d_2, \dots, d_k, d_1, \dots, d_k, d_1, \dots].
\]
Additionally, we let 
\begin{align*}
P &= [d_k, d_{k-1}, \dots, d_1, d_k, \dots, d_1, \dots] ,\\
Q &= [d_1, d_2, \dots, d_k, d_1, \dots, d_k, \dots].
\end{align*}
Then $P^*|Q$ is a section of $T$, which we denote by ${}^{\infty}\Pi| \Pi^{\infty}$.
\begin{proposition}\label{PropLpurelyPeriodic}
Fix a finite sequence of Romik digits $\Pi = [d_1, \dots, d_k]$ and let
\[
    N
    =
    \begin{pmatrix}
    p & p' \\
    q & q' \\
    \end{pmatrix}
    =
    N_{d_1} \cdots N_{d_k}.
\]
Then
\[
L({}^{\infty}\Pi |\Pi^{\infty})
=
\frac{\sqrt{
\Tr(N)^2 - 4\det(N)
}}{\sqrt2 q}.
\]
\end{proposition}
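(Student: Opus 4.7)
The plan is to exploit pure periodicity: both $P$ and $Q$ are purely periodic Romik sequences, so their stereographic projections $\|P\|$ and $\|Q\|$ will be fixed points of explicit Möbius transformations, hence roots of explicit quadratics. Solving those quadratics and applying \eqref{EqDefLSection} will give the formula.

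Concretely, I would first note that $Q = [d_1,d_2,\dots,d_k,d_1,d_2,\dots]$ is purely periodic with period $\Pi$, so $\mathcal{T}^k(Q) = Q$. Iterating Proposition~\ref{PropNormShift} $k$ times, we get $\|Q\| = N\cdot \|Q\|$ where $N = \left(\begin{smallmatrix}p & p' \\ q & q'\end{smallmatrix}\right)$. Writing this fractional-linear fixed-point equation out yields
\[
q\|Q\|^2 + (q'-p)\|Q\| - p' = 0,
\]
with discriminant $(p-q')^2 + 4qp' = (p+q')^2 - 4(pq'-p'q) = \Tr(N)^2 - 4\det(N)$. Since the entries of $N_1,N_2,N_3$ are all non-negative, so are those of $N$, and the unique non-negative root is
\[
\|Q\| = \frac{(p-q') + \sqrt{\Tr(N)^2 - 4\det(N)}}{2q}.
\]

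Next I would analyze $P = [d_k,d_{k-1},\dots,d_1,d_k,d_{k-1},\dots]$, which is purely periodic with the \emph{reversed} period. By the same reasoning, $\|P\| = (N_{d_k}\cdots N_{d_1})\cdot \|P\|$, and Proposition~\ref{PropNdCheck} identifies the relevant matrix as $\left(\begin{smallmatrix} q' & p' \\ q & p\end{smallmatrix}\right)$. The resulting fixed-point equation is
\[
q\|P\|^2 + (p-q')\|P\| - p' = 0,
\]
which differs from the equation for $\|Q\|$ only in the sign of the middle coefficient. Taking the non-negative root,
\[
\|P\| = \frac{(q'-p) + \sqrt{\Tr(N)^2 - 4\det(N)}}{2q}.
\]

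Adding the two expressions, the linear terms cancel exactly and we get $\|P\| + \|Q\| = \frac{\sqrt{\Tr(N)^2 - 4\det(N)}}{q}$. Dividing by $\sqrt{2}$ per the definition \eqref{EqDefLSection} gives the claimed formula, because $L({}^\infty\Pi|\Pi^\infty) = L(P^*|Q) = (\|P\|+\|Q\|)/\sqrt{2}$. The only mild subtlety is justifying the choice of the positive square root in each quadratic; this reduces to the observation that the entries of $N$ are non-negative (so $q\ge 0$ and $p'\ge 0$), and one needs to handle separately the degenerate case $\Pi = [3,3,\dots,3]$ (and its symmetric counterpart), in which $q = 0$ and both sides of the desired formula should be interpreted as $+\infty$; this edge case is the only place where care is required, and the generic argument above covers everything else.
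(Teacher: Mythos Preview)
Your proof is correct and follows essentially the same approach as the paper's: both arguments use Proposition~\ref{PropNormShift} to set up fixed-point equations for $\|P\|$ and $\|Q\|$, invoke Proposition~\ref{PropNdCheck} to identify the matrix for the reversed period, solve the resulting quadratics (selecting the non-negative root), and observe that the linear terms cancel upon summing. The paper phrases the fixed-point step via the closed formula $x_\pm=\frac{(a-d)\pm\sqrt{\Delta(A)}}{2c}$ rather than writing out the quadratic explicitly, but this is a cosmetic difference only.
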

\begin{proof}
For 
$
A =
\left(
\begin{smallmatrix}
a & b \\ c & d 
\end{smallmatrix}
\right)
$,
we let 
\[
\Delta(A) = \Tr(A)^2 - 4\det(A) = (a - d)^2 + 4bc.
\]
Then, as an easy exercise, one can prove that 
$A\cdot x_{\pm} = x_{\pm}$
whenever
\begin{equation}\label{EqEigenvector}
x_{\pm} =\frac{(a-d) \pm\sqrt{\Delta(A)}}{2c}.
\end{equation}
Also, if $bc > 0$, we have $x_+>0$ and $x_-<0$.

As before, we let
\begin{align*}
P &= [d_k, d_{k-1}, \dots, d_1, d_k, \dots, d_1, \dots], \\
Q &= [d_1, d_2, \dots, d_k, d_1, \dots, d_k, \dots].
\end{align*}
Then Proposition~\ref{PropNormShift} gives 
\[
N_{d_k} \cdots N_{d_1} \cdot \| P \| = \| P \|,  \qquad
N_{d_1} \cdots N_{d_k} \cdot \| Q \| = \| Q \|. 
\]
Also, Proposition~\ref{PropNdCheck} implies 
$
\Delta(N_{d_1}\cdots N_{d_k})
=
\Delta(N_{d_k}\cdots N_{d_1})
$.
So we obtain from \eqref{EqEigenvector} that
\[
\| P \|
=
\frac{q' - p + \sqrt{\Delta(N)}}{2q}, \qquad
\| Q \|
=
\frac{p - q' + \sqrt{\Delta(N)}}{2q}.
\]
Therefore
\[
L(P^*|Q) =
\frac{\| P \| + \| Q \|}{\sqrt2}
=
\frac{\sqrt{\Delta(N)}}{\sqrt2 q}.  \qedhere
\]
\end{proof} 

\subsection{Combinatorial properties of doubly infinite admissible sequences}
To ease notation, let us omit commas between digits, whenever there is no fear of confusion.  
For example, we write $33$ instead of $3, 3$.
Also, the concatenated sequence $[d, P]$ shall be shortened as $dP$.

\begin{proposition}\label{ForbiddenWords}
Suppose $T$ is a doubly infinite admissible Romik sequence. Then, the following blocks cannot appear in $T$:
\[ 
	33, 11, 232, 212.
\]
\end{proposition}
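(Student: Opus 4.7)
The plan is to use the $\vee$-symmetry $L(T) = L(T^{\vee})$ from \eqref{LInvariance}: since $11 = 33^{\vee}$ and $212 = 232^{\vee}$ under \eqref{ConjugateDigit}, it suffices to show that neither $33$ nor $232$ can appear, the cases $11$ and $212$ then following upon replacing $T$ by $T^{\vee}$. For each of the two remaining blocks I would pick a section $P^{*}|Q$ of $T$ aligned with it, lower-bound $\|P\|$ and $\|Q\|$ using Proposition~\ref{PropNormShift}, and apply $L(P^{*}|Q) = (\|P\| + \|Q\|)/\sqrt{2}$ to force $L(T) > 2$.

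For $33$ occurring at $t_{k} = t_{k+1} = 3$, I split just before position $k$, taking $Q = [t_{k}, t_{k+1}, \ldots] = [3, 3, t_{k+2}, \ldots]$ and $P = [t_{k-1}, t_{k-2}, \ldots]$. The fractional linear action of $N_{3}$ on $[0, \infty]$ is the translation $t \mapsto t + \sqrt{2}$, so $\|Q\| = \|[t_{k+2}, \ldots]\| + 2\sqrt{2} \ge 2\sqrt{2}$, with equality iff the right tail equals $[1^{\infty}]$. Since $\|P\| \ge 0$ with equality iff $P = [1^{\infty}]$, we get $L(P^{*}|Q) \ge 2$, and equality forces $T$ to be all $1$'s outside the central $33$. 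The strict case is done immediately; in the degenerate equality case, pass to $T^{\vee}$, which has all $3$'s outside a central $11$, and choose any section of $T^{\vee}$ whose right half is $[3^{\infty}]$, giving $\|Q'\| = \infty$ and $L(T) = L(T^{\vee}) = \infty$, contradicting admissibility.

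For $232$ at $t_{k} t_{k+1} t_{k+2} = 2, 3, 2$, I split around the middle $3$: $Q = [t_{k+1}, t_{k+2}, \ldots] = [3, 2, t_{k+3}, \ldots]$ and $P = [t_{k}, t_{k-1}, \ldots] = [2, t_{k-1}, \ldots]$. Direct computation gives $N_{3}N_{2} \cdot t = (3t + 2\sqrt{2})/(\sqrt{2}t + 1)$ and $N_{2} \cdot t = (t + \sqrt{2})/(\sqrt{2}t + 1)$, both strictly decreasing on $[0, \infty]$. Hence $\|Q\| \ge 3\sqrt{2}/2$ with equality iff the right tail is $[3^{\infty}]$, and $\|P\| \ge 1/\sqrt{2}$ with equality iff $P = [2, 3^{\infty}]$, so $L(P^{*}|Q) \ge (1/\sqrt{2} + 3\sqrt{2}/2)/\sqrt{2} = 2$. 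Equality holds only when $T$ is all $3$'s outside the central $232$, but then $T$ contains $33$ and is absorbed into the previous case.

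The main obstacle is precisely the boundary configurations where $\|P\| + \|Q\|$ meets the threshold $2\sqrt{2}$ exactly; these correspond to one-sided tails identically equal to $[1^{\infty}]$ or $[3^{\infty}]$, and they have to be disposed of either by passing to $T^{\vee}$ (so that a tail $[1^{\infty}]$ becomes a tail of $3$'s with infinite norm) or by noting that the extremal sequence already contains one of the forbidden blocks already treated. Everything else reduces to the strict monotonicity of the linear fractional actions $N_{d} \cdot t$ on $[0, \infty]$.
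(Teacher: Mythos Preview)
Your proof is correct and follows essentially the same route as the paper: reduce to the blocks $33$ and $232$ via the symmetry $L(T)=L(T^{\vee})$, then for each one pick the natural section and bound $\|P\|+\|Q\|$ from below using the fractional linear actions of $N_2$, $N_3$, $N_3N_2$. Your treatment of the equality cases is more explicit than the paper's (which just says the inequality ``must be strict'' and that $\|P\|=0$ makes $T$ ``clearly not admissible''), but the underlying reasoning is the same.
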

\begin{proof}
	Assume that $T$ contains $33$, so that it contains a section $P^* | 33Q$.
	Since $
N_3N_3 = 
\left(
\begin{smallmatrix}
1 & 2\sqrt2 \\
0 & 1 \\
\end{smallmatrix}
\right)
$,
Proposition~\ref{PropRealCylinder} gives
$\| 33 Q \| \ge 2\sqrt2$,
so that
	\[
		L(P^*| 33 Q) = \frac{\| P \| + \| 33Q \|}{\sqrt2} \ge2,
	\]
	where the equality holds only if $\|P\| = \|Q\| = 0$.
	But, if either $\| P \|$ or $\| Q \|$ is zero, then $T$ is clearly not admissible. 
	Therefore the inequality above must be strict. 
	This shows that an admissible $T$ cannot contain $33$. 
	Applying the same argument to $T^{\vee}$, we see that $11$ is also forbidden.
	
	For $232$, we apply Proposition~\ref{PropRealCylinder} with 
	\[
N_2 = 
\begin{pmatrix}
1 & \sqrt2 \\
\sqrt2 & 1 
\end{pmatrix}
\quad
\text{and}
\quad
N_3N_2 = 
\begin{pmatrix}
3 & 2\sqrt2 \\
\sqrt2 & 1 
\end{pmatrix}
	\]
	to obtain
	\[
\| 2P \| \ge 
\frac1{\sqrt2}
\quad
\text{and}
\quad
\| 32 Q \| 
\ge 
\frac3{\sqrt2},
	\]
	so that
	\[
		L(P^*2| 32 Q) = 
		\frac1{\sqrt2}
		\left(
\| 2P \| 
+
\| 32 Q \| 
		\right)
		\ge 
		\frac1{\sqrt{2}}
		\left(
\frac1{\sqrt2}
+
\frac3{\sqrt2}
		\right)
		=2.
	\]
	Again, this inequality must be strict and we see that $232$ is forbidden.
	Likewise, by applying the same argument to $T^\vee$, we see that $212$ is also forbidden.
\end{proof}

\begin{proposition}\label{AdmissibleComparison}
	Let $P, Q \in \{1, 2,3\}^{\mathbb{N}}$.
	Then,
	$L(P^*2 | 3 1 Q) \le 2$ if and only if $\|P\| \ge \|Q\|$. Moreover, $L(P^*2 | 3 1 Q) = 2$ if and only if $P = Q$. 
\end{proposition}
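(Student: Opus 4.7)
The plan is to reduce everything to a direct computation involving the stereographic projection values $x := \|P\|$ and $y := \|Q\|$, and then to check that the resulting inequality is equivalent to $x \ge y$ with equality iff $x = y$.

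First, by Proposition~\ref{PropNormShift} together with \eqref{EqDefNd}, one has
\[
\|2P\| = N_2 \cdot \|P\| = \frac{x + \sqrt{2}}{\sqrt{2}\,x + 1},
\qquad
\|31Q\| = N_3 N_1 \cdot \|Q\| = \frac{3y + \sqrt{2}}{\sqrt{2}\,y + 1},
\]
where in the second equality I use
\[
N_3 N_1 = \begin{pmatrix} 1 & \sqrt{2} \\ 0 & 1 \end{pmatrix}\begin{pmatrix} 1 & 0 \\ \sqrt{2} & 1 \end{pmatrix} = \begin{pmatrix} 3 & \sqrt{2} \\ \sqrt{2} & 1 \end{pmatrix}.
\]
By the definition \eqref{EqDefLSection}, the inequality $L(P^*2\,|\,31Q) \le 2$ reads $\|2P\| + \|31Q\| \le 2\sqrt{2}$, i.e.
\[
\frac{x + \sqrt{2}}{\sqrt{2}\,x + 1} + \frac{3y + \sqrt{2}}{\sqrt{2}\,y + 1} \le 2\sqrt{2}.
\]

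Next, I would clear denominators (both $\sqrt{2}\,x+1$ and $\sqrt{2}\,y+1$ are positive) and expand. Multiplying out the left-hand side gives
\[
4\sqrt{2}\,xy + 3x + 5y + 2\sqrt{2},
\]
while the right-hand side becomes
\[
2\sqrt{2}\bigl(\sqrt{2}\,x+1\bigr)\bigl(\sqrt{2}\,y+1\bigr) = 4\sqrt{2}\,xy + 4x + 4y + 2\sqrt{2}.
\]
Thus the inequality collapses to the linear one $3x + 5y \le 4x + 4y$, i.e.\ $y \le x$, which is $\|Q\| \le \|P\|$ as claimed. Moreover equality holds iff $x = y$, i.e.\ iff $\|P\| = \|Q\|$.

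The only remaining issue is the second half: promoting $\|P\| = \|Q\|$ to $P = Q$ as digit sequences. I would invoke the fact that the stereographic projection from $\QQQ$ to $[0,\infty]$ is a homeomorphism (this is implicit in the commutative diagram \eqref{EqCommuteDiagram} and the description of images of cylinder sets in Figure~\ref{FigCylindersOnReals}), so that $\|P\| = \|Q\|$ forces $P$ and $Q$ to represent the same point of $\QQQ$; under the identification of sequences with their geometric points this gives $P = Q$. There is no genuine obstacle here — the computation is elementary — the only thing to be careful about is the sign bookkeeping in the algebraic simplification, which is why I would expand both sides symbolically rather than try to combine the fractions before cross-multiplying.
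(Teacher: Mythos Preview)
Your proof is correct and follows essentially the same approach as the paper: compute $\|2P\|$ and $\|31Q\|$ via Proposition~\ref{PropNormShift}, then reduce the inequality $L\le 2$ to the linear condition $\|Q\|\le\|P\|$. The only cosmetic difference is that the paper packages the algebra as the single identity
\[
L(P^*2\mid 31Q)=2+\frac{\|Q\|-\|P\|}{\sqrt2(\sqrt2\,\|P\|+1)(\sqrt2\,\|Q\|+1)},
\]
from which both claims are read off directly, whereas you clear denominators; the content is identical.
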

\begin{proof}
Using Proposition~\ref{PropNormShift}, we have
\[
\| 2P \| =  \frac{\| P \| + \sqrt2}{\sqrt2 \| P \| + 1}
\ \text{ and } \ 
 	\| 31Q \| = 
	\frac{3\| Q \| + \sqrt2}{\sqrt2 \| Q \|+ 1},
\]
so that
\begin{align*}
L(P^*2 | 31Q)
&=
\frac1{\sqrt2}
\left( 
\frac{\| P \| + \sqrt2}{\sqrt2 \| P \| + 1}
+
\frac{3\| Q \| + \sqrt2}{\sqrt2 \| Q \|+ 1}   
\right) \\
&=
2 + 
\frac{\| Q \|-\| P \|}{\sqrt2(\sqrt2\| P \| + 1)(\sqrt2\| Q \|+1)}.  
\end{align*}
The statements in the proposition follow from this.
\end{proof}

\begin{proposition}\label{AdmissibleComparison2}
For any $P_1, P_2, Q_1, Q_2 \in \{1, 2,3\}^{\mathbb{N}}$, we have 
\[ 
L(P_1^* 2 | 3 1 Q_1)  \ge L(P_2^*2| 1 3  Q_2).
\]
\end{proposition}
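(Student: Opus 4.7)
The plan is to reduce the inequality to simple upper and lower bounds on $\|2P\|$, $\|31Q\|$, and $\|13Q\|$ coming from the fact that such sequences sit inside specific cylinder sets, and then add the bounds together. By the definition \eqref{EqDefLSection},
\[
L(P_1^*2 \mid 31 Q_1) - L(P_2^*2 \mid 13 Q_2)
= \tfrac{1}{\sqrt 2}\bigl(\|2P_1\| + \|31Q_1\| - \|2P_2\| - \|13Q_2\|\bigr),
\]
so the task is to show $\|2P_1\| + \|31Q_1\| \ge \|2P_2\| + \|13Q_2\|$.

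First I would apply Proposition~\ref{PropRealCylinder} to the three finite prefixes $[2]$, $[3,1]$, and $[1,3]$. Since $N_2 = \left(\begin{smallmatrix} 1 & \sqrt 2\\ \sqrt 2 & 1\end{smallmatrix}\right)$ and $[2]$ has one (odd) occurrence of $2$, any $2P \in \Cyl(2)$ satisfies
\[
\tfrac{1}{\sqrt 2} \le \|2P\| \le \sqrt 2.
\]
For the prefix $[3,1]$ (no $2$'s, hence even), $N_3 N_1 = \left(\begin{smallmatrix} 3 & \sqrt 2 \\ \sqrt 2 & 1\end{smallmatrix}\right)$ gives
\[
\sqrt 2 \le \|31Q\| \le \tfrac{3}{\sqrt 2}.
\]
For the prefix $[1,3]$ (again no $2$'s), $N_1 N_3 = \left(\begin{smallmatrix} 1 & \sqrt 2 \\ \sqrt 2 & 3\end{smallmatrix}\right)$ gives
\[
\tfrac{\sqrt 2}{3} \le \|13Q\| \le \tfrac{1}{\sqrt 2}.
\]

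Combining the lower bounds on the left-hand side and the upper bounds on the right-hand side,
\[
\|2P_1\| + \|31Q_1\| \;\ge\; \tfrac{1}{\sqrt 2} + \sqrt 2 \;=\; \tfrac{3}{\sqrt 2}
\;=\; \sqrt 2 + \tfrac{1}{\sqrt 2} \;\ge\; \|2P_2\| + \|13Q_2\|,
\]
which is exactly the desired inequality.

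There is no real obstacle here; the statement is essentially a bookkeeping consequence of Proposition~\ref{PropRealCylinder}. The only care needed is to use the correct case of that proposition (even versus odd number of $2$'s in the prefix), but each of the three prefixes involved contains at most one $2$ and the intervals read off directly. This approach also makes clear why the bound is tight precisely at $L = \sqrt{2}$, which will match the extremal sequences discussed earlier.
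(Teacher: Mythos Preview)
Your proof is correct and follows essentially the same approach as the paper's: both arguments invoke Proposition~\ref{PropRealCylinder} on the prefixes $[2]$, $[3,1]$, $[1,3]$ to obtain the bounds $\tfrac{1}{\sqrt2}\le\|2P\|\le\sqrt2$, $\|31Q\|\ge\sqrt2$, $\|13Q\|\le\tfrac{1}{\sqrt2}$, and then combine them additively. Your closing remark is slightly off, though: the common value of the two sides at the crossover is $\tfrac{1}{\sqrt2}\cdot\tfrac{3}{\sqrt2}=\tfrac{3}{2}$, not $\sqrt2$.
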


\begin{proof}
Proposition~\ref{PropRealCylinder} shows that 
\[
	\frac1{\sqrt2} \le \| 2P \| \le \sqrt2
\]
for any $P \in \{ 1, 2, 3 \}^{\mathbb{N}}$. 
So
\[
	\| 2P_1 \| - \| 2P_2 \|
	\ge \frac1{\sqrt2} - \sqrt2 = -\frac1{\sqrt2}.
\]
Likewise, we use
\[
N_1N_3 =
\begin{pmatrix}
1 & \sqrt2 \\ \sqrt2 & 3 
\end{pmatrix}
\quad
\text{and}
\quad
N_3N_1 =
\begin{pmatrix}
3 & \sqrt2 \\ \sqrt2 & 1 
\end{pmatrix}
\]
to deduce
\[
\| 31Q_1 \| - \| 13 Q_2 \| 
\ge \sqrt2 - \frac1{\sqrt2} 
=\frac1{\sqrt2}.
\]
Combining these two inequalities,
\[
	 L(P_1^* 2 | 3 1 Q_1)  - L(P_2^*2| 1 3  Q_2)
	 \ge 0
	 . \qedhere
\]
\end{proof}

\begin{proposition}\label{UnitWord}
Suppose that a doubly infinite Romik sequence $T$ is admissible.
Then neither $2 (31)^k  3 2$ nor $2 (13)^k 1 2$ can appear in $T$ for any $k\ge0$.
\end{proposition}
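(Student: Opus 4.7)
My plan is to induct on $k \ge 0$, treating both $2(31)^k 3 2$ and $2(13)^k 1 2$ together. Since $2(13)^k 1 2 = (2(31)^k 3 2)^{\vee}$ and $L(T) = L(T^{\vee})$, it suffices to rule out $2(31)^k 3 2$; the base case $k = 0$ is exactly Proposition~\ref{ForbiddenWords}.

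For the inductive step, assume the result for all $j < k$ and suppose an admissible $T$ contains $2(31)^k 3 2$ at positions $t_1, \ldots, t_{2k+3}$. Write $A = t_0 t_{-1} \cdots$ and $B = t_{2k+4} t_{2k+5} \cdots$. Placing the section boundary between $t_1$ and $t_2$ fits the template of Proposition~\ref{AdmissibleComparison} with $P = A$ and $Q = (31)^{k-1} 3 2 \, B$, producing $\|A\| \ge M \cdot \|B\|$, where $M = (N_3 N_1)^{k-1} N_3 N_2$ and $\cdot$ denotes the fractional linear action of \eqref{EqFractionalLinearAction}. Since the block is a palindrome and $L(T) = L(T^{*})$, the mirror argument applied to $T^{*}$ yields the symmetric inequality $\|B\| \ge M \cdot \|A\|$. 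Subtracting forces $a := \|A\| = \|B\|$, and $a \ge M \cdot a$ reduces, after clearing denominators, to $a \ge r_k$, where $r_k$ is the unique positive fixed point of $M$. A short computation confirms $r_k > \sqrt{2}$ for every $k \ge 1$.

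The rest is a digit-by-digit propagation leftward. Since $a > \sqrt{2}$, we are forced to $t_0 = 3$; then $t_{-1} = 2$ would create $2 3 2$ and $t_{-1} = 3$ would create $3 3$, so $t_{-1} = 1$. Stripping the prefix $3 1$ off $A$ converts the bound into $\|t_{-2} t_{-3} \cdots\| \ge D_1 := (N_3 N_1)^{-1} \cdot r_k$. Iterating, at round $j$ the bound is $\|t_{-2j} t_{-(2j+1)} \cdots\| \ge D_j := (N_3 N_1)^{-j} \cdot r_k$. Monotonicity of $(N_3 N_1)^{-1}$ keeps $D_j > \sqrt{2}$, so $t_{-2j} = 3$ is forced, and the three options for $t_{-(2j+1)}$ become: $3$ (killed by $3 3$), $2$ (which creates $2 (31)^j 3 2$ at $t_{-(2j+1)}, \ldots, t_1$, forbidden by the inductive hypothesis since $j < k$), or $1$ (feasible only when $D_j \le 3/\sqrt{2}$). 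The induction closes at $j = k - 1$: the identity $D_{k-1} = N_3 N_2 \cdot r_k$ follows from $M r_k = r_k$, and the elementary computation
\[
\frac{3 r + 2\sqrt{2}}{\sqrt{2} r + 1} - \frac{3}{\sqrt{2}} = \frac{1}{\sqrt{2} (\sqrt{2} r + 1)} > 0
\]
shows $N_3 N_2 \cdot r_k > 3/\sqrt{2}$, so $t_{-(2k-1)} = 1$ is also ruled out. All three choices for $t_{-(2k-1)}$ now fail, delivering the desired contradiction.

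The main technical obstacle is the bookkeeping of the $(N_3 N_1)^{-1}$-orbit of $r_k$: one must check that this orbit remains in $(\sqrt{2}, 3/\sqrt{2}]$ through the first $k-1$ rounds so that the propagation goes through cleanly. This reduces to the monotone increase of the orbit on the interval containing $r_k$, together with the bounds $r_k > (1 + \sqrt{3})/\sqrt{2}$ (the attracting fixed point of $N_3 N_1$) and $N_3 N_2 \cdot r_k > 3/\sqrt{2}$ just verified. Moreover, if the orbit happens to cross $3/\sqrt{2}$ strictly before round $k-1$, the same trichotomy delivers the contradiction one step earlier, so the induction closes either way.
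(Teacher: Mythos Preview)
The step ``subtracting forces $a := \|A\| = \|B\|$'' does not work. After clearing denominators the two constraints become $a(qb+q') \ge pb + p'$ and $b(qa+q') \ge pa+p'$ (writing $M = \left(\begin{smallmatrix}p & p'\\ q & q'\end{smallmatrix}\right)$), but one cannot subtract two same-direction inequalities. Concretely, since $\det M = \det N_2 = -1$ the map $M$ is \emph{decreasing} on $[0,\infty]$, and any pair with both $a,b \ge r_k$ already satisfies $a \ge M(b)$ and $b \ge M(a)$. What \emph{does} follow is $\max(\|A\|,\|B\|) \ge r_k$: if both were $< r_k$ then $M(\|A\|) > r_k > \|B\|$, contradicting $\|B\| \ge M(\|A\|)$. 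Since the block is a palindrome you may then assume without loss of generality that $\|A\| \ge r_k$, after which your leftward propagation and fixed-point bookkeeping go through as written. So the gap is real but patchable.

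The paper's argument avoids this detour entirely. It uses only the single inequality $\|P\| \ge \|Q\|$ from Proposition~\ref{AdmissibleComparison}, with $P$ the left tail and $Q = (31)^{k-1}32\cdots$, and compares $P$ with $Q$ digit by digit via Proposition~\ref{PropDigitComparison}. Because the first $2k-1$ digits of $Q$ are $(31)^{k-1}3$ (containing no $2$'s), this comparison together with minimality of $k$ and the forbidden blocks $33,232$ forces $P$ to begin with $(31)^{k-1}3$ as well; at position $2k$, where $q_{2k}=2$, all three candidate values of $p_{2k}$ are then impossible. No reverse inequality, fixed point $r_k$, or matrix-orbit analysis is needed.
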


\begin{proof}
It is sufficient to prove only the first kind is forbidden because $ 2 (31)^k  3 2 =(2 (13)^k 1 2)^{\vee}$.
Assume that $T$ contains such a sequence. 
Choose a minimal $k$. 
Note that $k=0$ would produce a forbidden block $232$ (Proposition~\ref{ForbiddenWords}).
So, $k$ is at least 1.
	
Choose a section $\cdots 2 | 31 (31)^{k-1}32 \cdots$ of $T$ and write it as
\[
	P^* 2|31 Q,
\]
with $P = [p_1, p_2, \dots]$ and $Q = (31)^{k-1} 3 2\dots$.
From Proposition~\ref{AdmissibleComparison}, we have $\|P\|\ge \|Q\|$. 
We will use this and Proposition~\ref{PropDigitComparison} to reveal the digits of $P$ successively. 
Consider the following table:
\[
	\begin{array}{lllcll}
		P: & p_1 & p_2 & \cdots & & \\
		Q: & 3 & 1 & \underbrace{3 \, 1 \, \cdots 3\, 1 }_{\text{($k-2$)-times}} & 3 & 2 \\
	\end{array}
\]
First, $\|P\|\ge \|Q\|$ implies that $p_1=3$. 
Then notice that neither $p_2 = 3$ nor $p_2=2$ is possible because it would produce a forbidden block $33$ or $232$ in $T$, so $p_2=1$.
Next, apply the condition $\|P\|\ge \|Q\|$ again to force $p_3=3$. Then $p_4 = 3$ is impossible because this would create a $33$.
Also, $p_4 = 2$ is impossible because this would violate the minimality of $k$. 
This way, we continue to reveal the digits of $P$ until we arrive at
\[
	P = (31)^{k-1} 3 \dots.
\]
Then there is no possible value for the next digit of $P$; 3 would produce a forbidden block $33$, 2 would violate the minimality of $k$, and 1 is not possible because $\|P\|\ge \|Q\|$. 
This finishes the proof that $T$ cannot contain $2 (31)^k 3 2$.
\end{proof}

\begin{proposition}\label{TwoBlock}
Suppose that a doubly infinite Romik sequence $T$ is admissible. 
Then, the following blocks are forbidden in $T$:
\[
	1\,2^{2n+1}\,3, \quad
	3\,2^{2n+1}\,1, \quad
	3\,2^{2n}\,3, \quad
	1\,2^{2n}\,1,
\]
for any $n\ge0$.
\end{proposition}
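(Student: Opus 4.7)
My plan is to prove all four forbidden families simultaneously by induction on the total length $n + 2$ of the block $a \cdot 2^n \cdot b$. Using the $\vee$- and reverse-symmetries of admissibility recorded in \eqref{LInvariance}, the four families collapse to two representatives: $1 \cdot 2^{2k+1} \cdot 3$ for $k \ge 0$ (call this case A) and $3 \cdot 2^{2k} \cdot 3$ for $k \ge 1$ (case B); the $k = 0$ instance of (B) is the block $33$, already ruled out by Proposition~\ref{ForbiddenWords}.

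The base case is $1 \cdot 2 \cdot 3$. Here Proposition~\ref{ForbiddenWords} forces the digit immediately after the $3$ to be $1$, so $T$ contains $1 \cdot 2 \cdot 3 \cdot 1$. Cutting between the $2$ and the $3$ produces the section $P^* 2 \mid 3 1 Q$ with $P = [1, \ldots]$. Proposition~\ref{AdmissibleComparison} gives $\|P\| \ge \|Q\|$; but $P \in \Cyl(1)$ forces $\|P\| \le 1/\sqrt{2}$, while $Q_1 \ne 1$ (needed to avoid $11$ in $T$) forces $\|Q\| > 1/\sqrt{2}$---the equality points $[2, 1^\infty]$ and $[3, 1^\infty]$ themselves contain $11$. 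This contradicts $\|P\| \ge \|Q\|$.

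For the inductive step, the extended context of the block is forced by the already-established forbidden words $11, 33, 212, 232$: case (A) extends to $1 \cdot 2^{2k+1} \cdot 3 \cdot 1$, and case (B) to $1 \cdot 3 \cdot 2^{2k} \cdot 3 \cdot 1$. In each case I cut between the last $2$ of the central run and the $3$ that immediately follows, producing a section $P^* 2 \mid 3 1 Q$ with $P = [2^{2k}, 1, \ldots]$ (case A) or $P = [2^{2k-1}, 3, 1, \ldots]$ (case B), and Proposition~\ref{AdmissibleComparison} yields $\|P\| \ge \|Q\|$. I then reveal $Q_1, Q_2, \ldots$ one by one using Proposition~\ref{PropDigitComparison}: at each step, any choice $Q_j \ne 2$ either violates $\|P\| \ge \|Q\|$ directly (according to the parity of the number of $2$'s in the common prefix $[2^{j-1}]$) or forces $T$ to contain a strictly shorter bad block of family (A) or (B), contradicting the inductive hypothesis. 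Hence $Q_j = 2$ is forced for $j \le 2k$ in case (A) and $j \le 2k - 1$ in case (B). Finally, the next forced digit---$Q_{2k+1} = 1$ in case (A), $Q_{2k} = 3$ in case (B)---together with the $1$ sitting immediately before $Q_1$ in $T$, produces the strictly shorter bad block $1 \cdot 2^{2k} \cdot 1$ (case A) or $1 \cdot 2^{2k-1} \cdot 3$ (case B), closing the induction.

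The main obstacle is the bookkeeping at each intermediate $j$: one must verify that the three alternatives $Q_j \in \{1, 2, 3\}$ split cleanly into ``forced numerical contradiction'', ``continue with $Q_j = 2$'', or ``forced shorter bad block'', according to whether the prefix $[2^{j-1}]$ has an even or odd count of $2$'s. This is a delicate but mechanical case analysis, and the clean interlocking of the two sub-inductions---case (A) at level $k$ invokes a (B)-type shorter block of length $2k + 2$, case (B) at level $k$ invokes an (A)-type shorter block of length $2k + 1$---ensures that the recursion terminates at the base case $1 \cdot 2 \cdot 3$.
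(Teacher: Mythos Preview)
Your argument is correct and follows essentially the same route as the paper: cut the extended block at $P^*2\mid 31Q$, invoke Proposition~\ref{AdmissibleComparison} to get $\|P\|\ge\|Q\|$, and then reveal the digits of $Q$ one by one via Proposition~\ref{PropDigitComparison}, using either the inequality or a strictly shorter forbidden block at each step. The paper packages this as a minimal-counterexample argument treating the two families $1\,2^k\,3$ ($k$ odd) and $1\,2^k\,1$ ($k$ even) simultaneously, which is logically the same as your explicit induction on total length; your separate base case $123$ is simply the degenerate instance $k=1$ of the paper's general step.

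One small slip to fix: in the base case you identify the equality point $\|Q\|=1/\sqrt2$ with the expansions $[2,1^\infty]$ and $[3,1^\infty]$, but those belong to $(\tfrac35,\tfrac45)$ with norm $\sqrt2$. The point with $\|\cdot\|=1/\sqrt2$ is $(\tfrac45,\tfrac35)$, whose expansions are $[1,3^\infty]$ and $[2,3^\infty]$; these force $33$ (not $11$) in $T$, so your conclusion stands. The same remark covers the $\|P\|=\|Q\|$ caveat of Proposition~\ref{PropDigitComparison} throughout the inductive step: equality would make $P$ and $Q$ two distinct expansions of a rational point, hence ending in $1^\infty$ or $3^\infty$, which is already forbidden.
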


\begin{proof}
	We will prove that $1 2^k 3$ for an odd $k$ and $1 2^k 1$ for an even $k$ are both forbidden, as the other cases will follow from this with $T^\vee$ replacing $T$.
	Assume the contrary and suppose that an admissible $T$ contains one of these. 
	Take a minimal $k$.

	We know that $k$ is at least 1 because $11$ is already forbidden.
	Suppose $k\ge1$ is odd.
	In particular, $T$ contains $12^k3$. 
	Since 232 and 33 are forbidden (Proposition~\ref{ForbiddenWords}) the block $12^k3$ can be extended to the right only as $12^k31$, resulting in a section 
	\[
		P^*12^{k-1}2|31 Q
	\]
	of $T$ for some $P$ and $Q$.
	Apply Proposition~\ref{AdmissibleComparison} to obtain $\| 2^{k-1}1P\|  \ge \| Q\| $.
	We compare the digits of $P$ and $Q$ in the following table:
	\[
		\begin{array}{rcll}
			2^{k-1}1P: & \overbrace{2 \, \cdots \, 2}^{\text{$k-1$ times}} & 1 & p_1 \cdots \\
			Q: & q_1  \, q_2 \cdots  & q_k & q_{k+1}\cdots    \\
		\end{array}
	\]
	First, $q_1 = 1$ is not possible because it would produce a forbidden block $11$ in $T$. Also, $q_1 = 3$ is impossible because of the inequality $\|2^{k-1}1P\| \ge \|Q\|$, (cf.~Proposition~\ref{PropDigitComparison}) so we conclude $q_1 = 2$. 
	For $q_2$, we see that $q_2 = 3$ violates the minimality of $k$ and that $q_2 = 1$ would invalidate $\| 2^{k-1}1P\| \ge \| Q\| $.
	So, the only possible choice is $q_2 = 2$. 
	For $q_3$, the inequality $\| 2^{k-1}3P\| \ge \|Q\| $ says this time that $q_3 = 3$ is impossible. If $q_3 = 1$, then it will produce $12^l1$ with $l<k$, violating the minimality of $k$. So, $q_3 =2$.
	Continuing this way using the minimality of $k$ and the inequality $\|2^{k-1}3P\| \ge \|Q\|$, we obtain $q_1 = \dots = q_{k-1} = 2$.
	Finally, for $q_k$, the inequality $\|2^{k-1}3P \|\ge \|Q\|$ implies $q_k = 3$, but this would violate the minimality of $k$.
	This proves that $T$ cannot have a sequence $12^k3$ for an odd $k\ge 1$. 
	
	Similarly, one can derive a contradiction for the case when the minimal $k$ is even.
	We leave the detail for the reader.
\end{proof}

\begin{proposition}\label{LBounds}
	Suppose that $P$ and $Q$ are Romik sequences.
	\begin{enumerate}[font=\upshape, label=(\alph*)]
		\item $L(P^* 3 | 1 Q) = L(P^* | 3  1  Q)$ and $L(P^* 1 | 3 Q) = L(P^* 13|  Q)$.
		\item $L(P^* 2 | 2  Q) \le 2$.
		\item $L(P^* 1 3 | 1  Q) \le 2$.
	\end{enumerate}
\end{proposition}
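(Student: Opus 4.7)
The plan is to reduce all three parts to elementary algebra with the stereographic norm $\|\cdot\|$, using Proposition~\ref{PropNormShift} to convert prepending a digit $d$ into the fractional linear action of $N_d$ on $[0,\infty]$. Two facts will do all the work. The first is that $N_3=\bigl(\begin{smallmatrix} 1 & \sqrt 2 \\ 0 & 1 \end{smallmatrix}\bigr)$ acts on $[0,\infty]$ by translation, so $\|3X\| = \|X\| + \sqrt 2$ for every one-sided Romik sequence $X$. The second is that, by Proposition~\ref{PropRealCylinder} (see Figure~\ref{FigCylindersOnReals}), the image of $\Cyl(d)$ under the stereographic projection is the explicit subinterval of $[0,\infty]$ recorded there; in particular $\|2X\| \le \sqrt 2$ and $\|1X\| \le 1/\sqrt 2$ for every $X$.

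For (a), the cleanest route is to prove the single identity
\[
L(X^*\,|\,3Y) \;=\; L(X^*3\,|\,Y)
\]
for arbitrary one-sided $X, Y$: unwinding the definition \eqref{EqDefLSection} and invoking $\|3X\| = \|X\|+\sqrt 2$ and $\|3Y\| = \|Y\|+\sqrt 2$, both sides evaluate to $(\|X\|+\|Y\|+\sqrt 2)/\sqrt 2$. The first identity of (a) is then the case $(X,Y)=(P,1Q)$, and the second is the case $(X,Y)=(1P,Q)$ (noting that $(1P)^* = P^*1$).

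For (b), the definition gives $L(P^*2\,|\,2Q) = (\|2P\|+\|2Q\|)/\sqrt 2$, and the bound $\|2P\|,\|2Q\| \le \sqrt 2$ is immediate from the image of $\Cyl(2)$; the inequality $L\le 2$ follows at once.

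For (c), combine (a) with the translation formula. Applying (a) yields $L(P^*13\,|\,1Q) = L(P^*1\,|\,31Q)$, and then $\|31Q\| = \sqrt 2 + \|1Q\|$ converts this into
\[
L(P^*13\,|\,1Q) \;=\; 1 \;+\; \frac{\|1P\| + \|1Q\|}{\sqrt 2}.
\]
Since $1P,1Q \in \Cyl(1)$ we have $\|1P\|,\|1Q\| \le 1/\sqrt 2$, giving $L(P^*13\,|\,1Q) \le 2$. The main obstacle I anticipate is merely organizational: once (a) is rephrased as the rule ``a $3$ can be pushed freely across the cut,'' parts (b) and (c) collapse to direct range estimates on the intervals $\|\Cyl(1)\|$ and $\|\Cyl(2)\|$.
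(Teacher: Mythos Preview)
Your proof is correct and follows essentially the same approach as the paper: both use Proposition~\ref{PropNormShift} (specifically the translation $\|3X\|=\|X\|+\sqrt2$) for part~(a), and the cylinder-interval bounds from Proposition~\ref{PropRealCylinder} for parts~(b) and~(c). The only cosmetic difference is that for~(c) the paper bounds $\|31P\|\le 3/\sqrt2$ directly from $N_3N_1=\bigl(\begin{smallmatrix}3&\sqrt2\\\sqrt2&1\end{smallmatrix}\bigr)$, whereas you first invoke~(a) and then bound $\|1P\|,\|1Q\|\le1/\sqrt2$ separately; the arithmetic comes out the same.
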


\begin{proof}
	For the first equality of (a), it suffices to notice
	from Proposition~\ref{PropNormShift} that
	\[
\| 3P \| + \| 1Q \| = \| P \| +\sqrt2 + \| 1Q \| = \| P \| + \| 31Q\|.
	\]
The second equality in (a) is proven similarly.  
For (b) and (c), we apply Proposition~\ref{PropRealCylinder} to utilize upper bounds of cylinder sets:
\[
\| 1P_1 \| \le \frac1{\sqrt2},\quad
\| 2P_2 \| \le \sqrt2,\quad
\| 31P_3 \| \le \frac3{\sqrt2} 
\]
for any $P_1, P_2, P_3 \in \{ 1, 2,3 \}^{\mathbb{N}}$.
So,
\[
L(P^* 2 | 2  Q) 
\le 
\frac1{\sqrt2}
\left(
\sqrt2
+
\sqrt2
\right)
= 2
\]
and
\[
L(P^* 1 3 | 1  Q) 
\le 
\frac1{\sqrt2}
\left(
\frac1{\sqrt2}
+
\frac3{\sqrt2}
\right)
= 2. \qedhere
\]
\end{proof}
\begin{proposition}\label{Axiom}
	A doubly infinite Romik sequence $T$ is admissible if and only if:
	\begin{enumerate}[font=\upshape, label=(\roman*)]
		\item none of the blocks $33, 11, 232, 212$ appears in $T$, and 
		\item for every section $P^*2 | 31 Q$ of $T$, $T^*$, $T^{\vee}$ and $(T^*)^{\vee}$, we have $\|P\|\ge \|Q\|$.
	\end{enumerate}
\end{proposition}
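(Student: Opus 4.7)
The forward direction is immediate. If $L(T)\le 2$, condition (i) is Proposition~\ref{ForbiddenWords}; for (ii), every section $P^*2|31Q$ of any of $T,T^*,T^\vee,(T^*)^\vee$ has Lagrange number at most $L(T)=L(T^*)=L(T^\vee)=L((T^*)^\vee)\le 2$ by \eqref{LInvariance}, and Proposition~\ref{AdmissibleComparison} forces $\|P\|\ge\|Q\|$.

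For the converse, assume (i) and (ii). Both conditions are preserved under $T\mapsto T^\vee$: the forbidden list in (i) is closed under $\vee$, and the quartet $\{T,T^*,T^\vee,(T^*)^\vee\}$ in (ii) is closed under $\vee$. Hence it suffices to show $L(P^*|Q)\le 2$ for every section of $T$ itself; the same argument will then bound sections of $T^\vee$. Classify cases by the pair $(t_0,t_1)=(p_1,q_1)$ of digits immediately flanking the cut. The pairs $(1,1)$ and $(3,3)$ are ruled out by (i). In case $(2,2)$ the bound is Proposition~\ref{LBounds}(b). In cases $(1,2)$ and $(2,1)$, Proposition~\ref{PropRealCylinder} gives $\|P\|+\|Q\|\le\frac{1}{\sqrt 2}+\sqrt 2=\frac{3}{\sqrt 2}$, so $L\le 3/2<2$.

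The remaining cases feature a $3$ adjacent to the cut; Proposition~\ref{LBounds}(a) lets the cut be shifted past that $3$ without changing $L$, and by (i) the three-digit pattern $(X,3,Y)$ with the $3$ sitting across (or just beside) the cut must be one of $231$, $132$, or $131$ (the fourth otherwise-possible pattern $232$ is forbidden). For $231$ the cut has the form $P^*2|31Q$ in $T$, and (ii) combined with Proposition~\ref{AdmissibleComparison} gives $L\le 2$. For $131$, after a further shift the cut is $\dots 13|1\dots$, bounded directly by Proposition~\ref{LBounds}(c). The pattern $132$, with cut $\dots 13|2\dots$, is the delicate case: this cut is \emph{not} a $P^*2|31Q$ section of $T$, but unpacking the definitions of $T^*$ and its sections shows that the $T^*$-section at the same physical cut has $P_{T^*}=Q_T$ beginning with $2$ and $Q_{T^*}=P_T$ beginning with $31$, so it is precisely of the form $P^*2|31Q$ in $T^*$; applying (ii) to $T^*$ and Proposition~\ref{AdmissibleComparison} once more yields $L\le 2$. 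The main obstacle throughout is this last bookkeeping step---identifying, for each local pattern involving a $3$ across the cut, which of the four variants $T,T^*,T^\vee,(T^*)^\vee$ realizes the section in the canonical form $P^*2|31Q$ required by (ii), and correctly matching the $P$ and $Q$ in Proposition~\ref{AdmissibleComparison} with the corresponding tails of the original section.
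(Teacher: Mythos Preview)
Your proof is correct and follows essentially the same route as the paper's: the forward implication via Propositions~\ref{ForbiddenWords} and~\ref{AdmissibleComparison}, then a case-by-case analysis of the pair $(d_1,d_2)$ flanking the cut, invoking Proposition~\ref{LBounds} and condition~(ii) as needed, with the ``reversed'' local patterns handled by passing to $T^*$ and using the symmetry $L(P^*|Q)=L(Q^*|P)$.

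The one substantive difference is your treatment of $(d_1,d_2)\in\{(1,2),(2,1)\}$. The paper extends $(2,1)$ to the right as $213$ and bounds $L(P_1^*2|13P_2')$ via Proposition~\ref{AdmissibleComparison2} by a section of $T^\vee$ already shown to be admissible (and then gets $(1,2)$ by reversing). Your direct cylinder estimate $\|P\|+\|Q\|\le \tfrac1{\sqrt2}+\sqrt2$ gives $L\le 3/2$ outright, which is shorter and avoids invoking Proposition~\ref{AdmissibleComparison2} altogether. Apart from this, your $X3Y$-pattern organization is just a repackaging of the paper's case split; in particular your ``delicate'' $132$ case is exactly the paper's ``considering reverses'' step made explicit.
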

\begin{proof}
	Suppose $T$ is admissible. 
	The statements (i) and (ii) are consequences of Propositions~\ref{ForbiddenWords} and \ref{AdmissibleComparison}.
	
	Conversely, assume that $T$ satisfies (i) and (ii). We will show that, for every section 
	$P_1^* d_1|d_2 P_2$
	of $T$ and $T^{\vee}$, 
	we have
	$L(P_1^* d_1|d_2 P_2) \le 2$.
	Of the nine choices of $(d_1, d_2)$ arising from all possible values of $d_1$ and $d_2$,
	we can exclude $(d_1,d_2) = (1, 1)$ and $(3, 3)$ because 33 and 11 are forbidden in $T$.
	In the case $(d_1, d_2) = (2, 2)$, we already have 
	the inequality $L(P_1^* 2|2 P_2) \le 2$
	from  Proposition~\ref{LBounds}.
	If $(d_1, d_2) = (2, 3)$, then the block $d_1d_2$ can only be extended to the right as $231$, so that the section becomes
	\[
		P_1^*2|31P_2',
	\]
	and we already know from (ii) of Proposition~\ref{AdmissibleComparison} that $L(P^*2|31P_2') \le 2$.
	When $(d_1, d_2) = (2, 1)$, it extends to the right as $213$, resulting in a section $P_1^*2|13P_2'$.
	Then we use Proposition~\ref{AdmissibleComparison} to obtain
	\[
		L(P_1^*2|13P_2') \le 
		L((P_1^{\vee})^*2|31(P_2')^{\vee}),
	\]
	the right hand side of which we already know is $\le 2$.
	Considering reverses and using the fact that $L(P^*|Q)$ is symmetric in $P$ and $Q$, we see that the only remaining case to consider is $(d_1, d_2) = (3, 1)$. 

	The block $d_1d_2 = 31$, when extended to the left, can result in only the following two types of sections
	\[
		P^*23|1Q, 
		\quad
		P^*13|1Q.
	\]
	The part (c) of Proposition~\ref{LBounds} says that $L(P^*13|1Q) \le 2$, covering the second type of section.
	For the first type, we use the part (a) of Proposition~\ref{LBounds} to obtain
	\[
		L(P^*23|1Q) = 
		L(P^*2|31Q),
	\]
	which has already been shown to be $\le2$.
	Therefore we have $L(P_1^*d_1|d_2P_2) \le 2$ for all possible $(d_1, d_2)$ and consequently  $T$ is admissible.
\end{proof}

\section{Combinatorial characterization of admissible sequences}\label{SecCombinatorics}

\subsection{Definitions}\label{SecWordTerminology}
In this section we will study sequences with values in the following two sets: $\AAA_2 = \{a, b\}$ and $\AAA_3 = \{ a, b, a^{\vee} \}$. (See \S\ref{SecInfiniteSeqDoublyInfiniteSeq}.)
We call $\AAA_2 $ and $\AAA_3$ a \emph{two-letter alphabet} and a \emph{three-letter alphabet}, respectively, and a sequence with values in $\AAA_2$  or $\AAA_3$ will be called a \emph{word}.
Additionally, we consider a \emph{finite word} on an either alphabet, which means a finite sequence with values in $\mathscr{A}_2$ or $\mathscr{A}_3$.
The set of all finite words (of any length $< \infty$) forms a free monoid under concatenation. 
\emph{The empty word} shall be regarded as the identity element of such a monoid.

For a finite word $w$ on $\{a, b\}$, we will say $w$ is \emph{odd} if $w$ contains an odd number of $b$'s, and is \emph{even} otherwise.
For any word $w$ on $\{a, b, a^{\vee}\}$, finite, infinite, or doubly infinite,
we define $w^{\vee}$ to be the word obtained by attaching $\vee$ to each letter in $w$, subject to the rule
\begin{equation}\label{EqCheckRule}
    b^{\vee} = b \quad \text{ and } \quad (a^{\vee})^{\vee} = a.
\end{equation}
The empty word will be regarded unchanged under ${}^{\vee}$.
By definition, taking $\vee$ commutes with concatenation, that is,
\begin{equation}\label{CheckCommutesWithConcatenation}
    (w_1 w_2)^{\vee} = w_1^{\vee}w_2^{\vee}
\end{equation}
for $w_1$ and $w_2$, which can be finite or infinite to one direction.

\subsection{Words and digit sequences}
To connect words and digit sequences,
we define a ``digit-substitution'' map $\Pi: \{ a, b, a^{\vee} \}^{\mathbb{N}} \longrightarrow \{ 1, 2, 3\}^{\mathbb{N}}$.
Namely, for each $E\in \{ a, b, a^{\vee}\}^{\mathbb{N}}$, we define $\Pi(E)$ to be the digit sequence, obtained by applying the substitution rule
\begin{equation}\label{AlphabetRule}
    \Pi(a) = 3 \, 1, \quad \Pi(b) = 2,\quad \Pi(a^{\vee}) = 1\, 3
\end{equation}
to each letter in $E$ successively.

Suppose that $T$ is a doubly infinite Romik sequence and $B$ is a doubly infinite word on $\{a, b, a^{\vee}\}$.
If we can recover $T$ from $B$ by the above digit substitution rule, we will say that $B$ is an \emph{associate of} $T$. 
More rigorously, if $T$ has a section $P^*|Q$ and if $B$ has a section $E^*|F$ such that
\[
\Pi(E^{\vee}) = P,  \qquad
\Pi(F) = Q,
\]
then we say that $B$ is an \emph{associate of $T$}.
Notice that we require 
$\Pi(E^{\vee}) = P$,
not
$\Pi(E) = P$.
This is because the letter $a$, when read ``backwards'', should represent $1\, 3$ not $3\, 1$.
For example, if $T$ is a doubly infinite Romik sequence having a section 
\[
P^* | Q = \cdots 3\, 1 | 3\, 1 \cdots
\]
and if a doubly infinite word $B$ with a section $E^*| F = \cdots a | a \cdots$ is an associate of $T$,
then we see that
\begin{align*}
\Pi(E^{\vee}) &=  \Pi(a^{\vee}\cdots ) = 1\, 3 \cdots = P, \\
\Pi(F) 
&= \Pi(a\cdots ) = 3\, 1 \cdots 
= Q.
\end{align*}
It is clear that $B$ is an associate of $T$ if and only if $B^{\vee}$ is an associate of $T^{\vee}$.

We will define a Lagrange number of a doubly infinite word $B$ on $\{ a, b, a^{\vee}\}$ as before. 
Namely, for $E,F \in \{ a, b, a^{\vee}\}^{\mathbb{N}}$, we let 
\begin{equation}\label{EqLwordSection}
L(E^*|F) =
		\frac{\| \Pi(E^{\vee}) \| + \| \Pi(F) \|  }{\sqrt2}
\end{equation}
and define
\begin{equation}\label{DefinitionLB}
	L(B) = 
	\sup_{E^*|F } 
	 \left\{
	 \max\left(
	L(E^* | F),
	L((E^{\vee})^* | F^{\vee})\right)
	\right\}
\end{equation}
where $\{E^* | F\}$ runs over all sections of $B$.
From the definition, we see that
\[
L(B) = 
L(B^*) = 
L(B^{\vee}) = 
L((B^{\vee})^*).
\]
We say that $B$ is \emph{admissible} if $L(B) \le 2$ and is \emph{strongly admissible} 
if $L(B) < 2$.

\begin{proposition}
    Suppose that $B$ is a doubly infinite word on $\{ a, b, a^{\vee} \}$ and $T$ is a doubly infinite Romik sequence.
    If $B$ is an associate of $T$, then $L(B) = L(T)$.
\end{proposition}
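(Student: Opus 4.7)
The plan is to identify the sections of $B$ with the \emph{aligned} sections of $T$---those sections $P^*|Q$ whose cut falls between two consecutive letters of $B$---and then invoke Proposition~\ref{LBounds}(a) to dispose of the remaining, non-aligned sections of $T$. First I would carry out the bookkeeping for the $\vee$ operation: from \eqref{AlphabetRule} and \eqref{ConjugateDigit} one checks letter-by-letter that $\Pi(\ell^{\vee}) = \Pi(\ell)^{\vee}$ for $\ell \in \{a, b, a^{\vee}\}$, so by \eqref{CheckCommutesWithConcatenation} we have $\Pi(E^{\vee}) = \Pi(E)^{\vee}$ for every $E \in \{a, b, a^{\vee}\}^{\mathbb{N}}$. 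Consequently, if $E^*|F$ is a section of $B$ and $P = \Pi(E^{\vee})$, $Q = \Pi(F)$ define the associated section $P^*|Q$ of $T$, then $P^{\vee} = \Pi(E)$ and $Q^{\vee} = \Pi(F^{\vee})$, so that \eqref{EqLwordSection} and \eqref{EqDefLSection} immediately give
\[
L(E^*|F) = L(P^*|Q), \qquad L((E^{\vee})^*|F^{\vee}) = L((P^{\vee})^*|Q^{\vee}).
\]
Since each section of $B$ thus produces an aligned section of $T$ with the same $\max$, comparing \eqref{DefinitionLB} with \eqref{DefinitionL} yields $L(B) \le L(T)$.

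For the reverse inequality, I would take an arbitrary section $P^*|Q$ of $T$ and classify it by where its cut falls inside $B$. Because $b$ is mapped to the single digit $2$, the only way to fail to be aligned is for the cut to split an $a = 3\,1$ or an $a^{\vee} = 1\,3$. In the first case $P^*|Q$ has the form $P'^*\,3 \,|\, 1\,Q'$, and Proposition~\ref{LBounds}(a) gives
\[
L(P'^*\,3 \,|\, 1\,Q') = L(P'^* \,|\, 3\,1\,Q'),
\]
where the right-hand side is the aligned cut immediately preceding the letter $a$. The symmetric identity $L(P'^*\,1 \,|\, 3\,Q') = L(P'^*\,1\,3 \,|\, Q')$ handles a cut inside an $a^{\vee}$. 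Applied simultaneously to the companion term $L((P^{\vee})^*|Q^{\vee})$---which under the $\vee$-duality corresponds to a cut of $T^{\vee}$ inside an $a^{\vee}$ (resp.\ $a$) of $B^{\vee}$---both summands in the $\max$ can be transported to an aligned position without changing value. Thus every section of $T$ contributes no more to the supremum in \eqref{DefinitionL} than some aligned section does, giving $L(T) \le L(B)$.

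Combining the two inequalities yields $L(T) = L(B)$. The only mildly delicate point is tracking the $\vee$ operation through the substitution $\Pi$, which the identity $\Pi(E^{\vee}) = \Pi(E)^{\vee}$ makes routine; the substantive content is the absorption identity of Proposition~\ref{LBounds}(a), which says that a cut inside a $3\,1$ or $1\,3$ block is invisible to the Lagrange quantity and may be slid freely to either adjacent letter boundary.
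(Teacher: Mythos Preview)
Your proof is correct and follows essentially the same approach as the paper's own proof: both observe that sections of $B$ correspond exactly to the ``aligned'' sections of $T$, and both invoke Proposition~\ref{LBounds}(a) to show that the non-aligned sections of $T$ (those that split an $a$ or $a^{\vee}$) contribute nothing new to the supremum. Your version simply fills in more of the bookkeeping---in particular the identity $\Pi(E^{\vee}) = \Pi(E)^{\vee}$ and the separate handling of the two terms in the $\max$---which the paper leaves implicit.
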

\begin{proof}
The two Lagrange numbers $L(T)$ and $L(B)$ are defined in an identical way (cf.~\eqref{EqDefLSection}, \eqref{DefinitionL}, \eqref{EqLwordSection}, \eqref{DefinitionLB}), except for the fact that not all sections of $T$ correspond to those of $B$.
Namely, a section of $T$ may ``break'' a letter $a$ or $a^{\vee}$ in $B$ as $\cdots 3|1 \cdots$ or $\cdots 1|3 \cdots$. 
However the part (a) of Proposition~\ref{LBounds} shows that we do not need to consider such a section of $T$, and
as a result, we conclude $L(T) = L(B)$.
\end{proof}

\subsection{Oriented words}\label{SecOrientation}
We will say that a word $w$ on $\{a, b, a^{\vee}\}$, finite or infinite, is \emph{$\vee$-oriented} if $w$ does not contain any of the words
\begin{equation}\label{ForbiddenSubword}
ab^{2k}a^{\vee}, \quad
a^{\vee} b^{2k}a,\quad
ab^{2k+1}a,\quad
a^{\vee}b^{2k+1}a^{\vee}
\end{equation}
for any $k\ge0$.
Any word of the form \eqref{ForbiddenSubword} is said to be a \emph{forbidden word}.
If any one of  $w, w^* , w^{\vee}, (w^*)^\vee$ is $\vee$-oriented, then the other three are also $\vee$-oriented.

The reason for introducing the terminology ``$\vee$-oriented words'' is as follows.
Suppose that a doubly infinite Romik sequence $T$ is admissible. 
Then Propositions~\ref{ForbiddenWords} and \ref{UnitWord} show that the digits 1 and 3 in $T$ can appear only 
in blocks of $1\, 3$ and $3\, 1$
and therefore $T$ can be rewritten as a doubly infinite word $B$ in $\{ a, b, a^{\vee}\}$,
according to the substitution rule \eqref{AlphabetRule}.
Moreover, such a word $B$ cannot contain any of the forbidden words \eqref{ForbiddenSubword}, thanks to Proposition~\ref{TwoBlock}.
We summarize this in the following proposition.
\begin{proposition}\label{PropAdmissibleAssociate}
Suppose that a doubly infinite Romik sequence $T$ is admissible. 
Then there exists a doubly infinite word $B$ on $\{ a, b, a^{\vee}\}$, which is a $\vee$-oriented associate of $T$. 
\end{proposition}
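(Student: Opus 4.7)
The plan is to build $B$ by parsing the digits of $T$ directly into blocks matching the three letters of $\{a, b, a^\vee\}$, and then to check that the four families of forbidden subwords in \eqref{ForbiddenSubword} cannot appear in $B$. All the combinatorial work needed is already packaged in Propositions~\ref{ForbiddenWords}, \ref{UnitWord}, and \ref{TwoBlock}.

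First I would observe that Propositions~\ref{ForbiddenWords} and \ref{UnitWord} force a rigid structure on the $1$'s and $3$'s in $T$. Since $11$ and $33$ are forbidden, any maximal substring of $T$ consisting entirely of $1$'s and $3$'s is strictly alternating; and by Proposition~\ref{UnitWord}, any such substring bounded on both sides by $2$'s has \emph{even} length, because the two odd-length possibilities are precisely $2(31)^k 3 2$ and $2(13)^k 1 2$. Consequently every finite alternating substring admits a canonical partition into consecutive pairs $31$ or $13$, determined by its initial digit. Applying the substitution $2 \mapsto b$, $31 \mapsto a$, $13 \mapsto a^\vee$ then produces a doubly infinite word $B$ on $\{a,b,a^\vee\}$. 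If $T$ contains only finitely many $2$'s, the infinite alternating tails are parsed in the same way, the choice being forced by the adjacent $2$ on the finite side; in the extreme case that $T$ has no $2$'s at all, either parsing $\cdots a a a \cdots$ or $\cdots a^\vee a^\vee a^\vee \cdots$ works. By construction $B$ is an associate of $T$.

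To finish I would pull back each forbidden subword in \eqref{ForbiddenSubword} through $\Pi$: a block of the form $a b^{2k} a^\vee$, $a^\vee b^{2k} a$, $a b^{2k+1} a$, or $a^\vee b^{2k+1} a^\vee$ in $B$ would contain in $T$ the interior digit substring $1 \cdot 2^{2k} \cdot 1$, $3 \cdot 2^{2k} \cdot 3$, $1 \cdot 2^{2k+1} \cdot 3$, or $3 \cdot 2^{2k+1} \cdot 1$, respectively. Each of these is forbidden in the admissible sequence $T$ by Proposition~\ref{TwoBlock}, with the degenerate $k=0$ cases of the first two absorbed into Proposition~\ref{ForbiddenWords} as $11$ and $33$. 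The main subtlety is concentrated in the parity step: Proposition~\ref{UnitWord} is precisely what guarantees that the parsing of alternating $1$--$3$ runs in $T$ is unambiguous, and without it the map $T \mapsto B$ would not even be well-defined; the remainder of the argument is then essentially mechanical.
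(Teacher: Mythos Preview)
Your proposal is correct and follows essentially the same approach as the paper: use Propositions~\ref{ForbiddenWords} and \ref{UnitWord} to parse $T$ into blocks $31$, $13$, $2$ (yielding an associate $B$), and then invoke Proposition~\ref{TwoBlock} to rule out the forbidden subwords \eqref{ForbiddenSubword}. The paper states this argument in one short paragraph; you have simply unpacked it with more care, including the edge cases where $T$ has only finitely many (or no) $2$'s and the explicit translation of each forbidden subword of $B$ back to a forbidden digit block in $T$.
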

Now, we will define $\jmath(E)\in \{a, b, a^{\vee}\}^{\mathbb{N}}$
for $E \in \{ a, b \}^{\mathbb{N}}$, so that $\jmath(E)$ is $\vee$-oriented.
First, we write $E = l_1l_2\cdots$ with $l_k \in \{ a, b \}$.
For each $k = 1, 2, \dots$, we let $t = t(k)$ be the 
the number of occurrences of $b$ in the sequence $l_1, \dots, l_{k-1}$. 
Then we define $\jmath(E) = l'_1 l'_2 \cdots$ with $l_k' \in \{ a, b, a^{\vee} \}$ where 
\begin{equation}\label{EqOrientation}
l'_k = 
\begin{cases}
a & \text{ if $l_k = a$ and $t(k)$ is even},\\
a^{\vee} & \text{ if $l_k = a$ and $t(k)$ is odd},\\
b & \text{ if } l_k = b.\\
\end{cases}
\end{equation}
Simply put, $\jmath(E)$ is obtained by attaching, or not attaching, $\vee$ to each occurrence of $a$ in $E$ successively avoiding all the forbidden words in \eqref{ForbiddenSubword}.
Likewise, if $w$ is a finite word in $\{ a, b \}$, we define $\jmath(w)$ in the same way using \eqref{EqOrientation}.
Then it is not difficult to see that 
    \begin{enumerate}[font=\upshape, label=($\jmath$-\Roman*)]
    \item the (concatenated) word $a\jmath(E)$ is also $\vee$-oriented,
    \item $\jmath$ is injective, 
    \item when $w$ is a finite word in $\{ a, b \}$, 
    \[
    \jmath(wE)
    =
    \begin{cases}
    \jmath(w)\jmath(E) & \text{ if $w$ is even,}\\
    \jmath(w)\jmath(E)^{\vee} & \text{ if $w$ is odd,}\\
    \end{cases}
    \]
    \item and, we have
    \[
    \jmath(w)^*
    =
    \begin{cases}
    \jmath(w^*) & \text{ if $w$ is even,}\\
    \jmath(w^*)^{\vee} & \text{ if $w$ is odd.}\\
    \end{cases}
    \]
    \end{enumerate}
The properties ($\jmath$-I), ($\jmath$-III), and ($\jmath$-IV) follow immediately from definition.
For ($\jmath$-II), let 
\[
\iota: 
\{a, b, a^{\vee}\}^{\mathbb{N}}
\longrightarrow 
\{ a, b \}^{\mathbb{N}} 
\]
be the ``forget-the-$\vee$'' map.
Then the composition $\iota\circ\jmath$ is clearly the identity map on $\{ a, b \}^{\mathbb{N}}$
and $\jmath$ is therefore injective.

\begin{definition}[order on words]\label{DefinitionOrder}
We define orders in the two sets 
$\{a, b\}^{\mathbb{N}}$ 
and 
$\{a, b, a^{\vee}\}^{\mathbb{N}}$ 
in the following way. 
First, 
$\{a, b\}^{\mathbb{N}}$ 
is given the lexicographic order ``$\prec$'' with $a\prec b$.
For $E_1, E_2 \in \{ a, b, a^{\vee}\}^{\mathbb{N}}$, we define
\[
	E_1 \prec E_2 \ \Leftrightarrow \ \| \Pi(E_1) \| > \| \Pi(E_2)\|.  
\]
\end{definition}
\begin{proposition}\label{PropOrderPreserving}
With respect to the orders in Definition~\ref{DefinitionOrder},
the map 
\[
\jmath: 
\{a, b\}^{\mathbb{N}}
\longrightarrow
\{ a, b, a^{\vee}\}^{\mathbb{N}}
\]
is an order-preserving injection.
\end{proposition}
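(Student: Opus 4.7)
The plan is as follows. Injectivity is already recorded as property ($\jmath$-II), so only order-preservation needs proof. Given $E_1, E_2 \in \{a,b\}^{\mathbb{N}}$ with $E_1 \prec E_2$ lexicographically, I would first isolate their longest common prefix and write $E_1 = w a E_1'$ and $E_2 = w b E_2'$, where $w$ is a finite word on $\{a,b\}$ and $E_1', E_2' \in \{a, b\}^{\mathbb{N}}$. The goal is then to show $\|\Pi(\jmath(E_1))\| > \|\Pi(\jmath(E_2))\|$.

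The main computational step is to locate where the Romik digit sequences $\Pi(\jmath(E_1))$ and $\Pi(\jmath(E_2))$ first differ and then apply Proposition~\ref{PropDigitComparison}. By the definition \eqref{EqOrientation} of $\jmath$, both $\jmath(E_1)$ and $\jmath(E_2)$ begin with $\jmath(w)$; the letter immediately following $\jmath(w)$ is determined by the parity $t$ of the number of $b$'s in $w$: it is $a$ in $\jmath(E_1)$ when $t$ is even, $a^{\vee}$ when $t$ is odd, and it is always $b$ in $\jmath(E_2)$. Applying the substitution rule $\Pi$ from \eqref{AlphabetRule}, the two Romik digit sequences share the common prefix $\Pi(\jmath(w))$, and the number of $2$'s inside this prefix equals the number of $b$'s in $w$, hence has parity $t$. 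Immediately after this common prefix, $\Pi(\jmath(E_2))$ has digit $2$, while $\Pi(\jmath(E_1))$ has digit $3$ if $t$ is even and digit $1$ if $t$ is odd. In the $t$ even case, $3 > 2$ together with a prefix carrying an even number of $2$'s lands us in the first branch of Proposition~\ref{PropDigitComparison} and yields $\|\Pi(\jmath(E_1))\| > \|\Pi(\jmath(E_2))\|$. In the $t$ odd case, $1 < 2$ together with a prefix carrying an odd number of $2$'s lands us in the second branch and again yields the same strict inequality. By Definition~\ref{DefinitionOrder} this is precisely $\jmath(E_1) \prec \jmath(E_2)$.

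The one subtle point, and the main (though mild) obstacle, is verifying the hypothesis $\|\Pi(\jmath(E_1))\| \ne \|\Pi(\jmath(E_2))\|$ needed to invoke Proposition~\ref{PropDigitComparison}. The two sequences are clearly distinct as digit sequences, but a priori two distinct Romik digit sequences can represent the same rational point, namely pairs of the form $[\dots, 2, 1^{\infty}]_{\QQQ} = [\dots, 3, 1^{\infty}]_{\QQQ}$ or $[\dots, 1, 3^{\infty}]_{\QQQ} = [\dots, 2, 3^{\infty}]_{\QQQ}$. However $\jmath(E)$ is always $\vee$-oriented by construction, so a short case analysis on concatenations of the codewords $\Pi(a) = 31$, $\Pi(b) = 2$, $\Pi(a^{\vee}) = 13$ shows that the digit blocks $11$ and $33$ can arise only from the forbidden junctions $a a^{\vee}$ and $a^{\vee} a$ respectively. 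Consequently neither $\Pi(\jmath(E_1))$ nor $\Pi(\jmath(E_2))$ can end in $1^{\infty}$ or $3^{\infty}$, which rules out the two-expansion coincidence and forces the norms to be strictly distinct.
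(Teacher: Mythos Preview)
Your proof is correct and follows essentially the same route as the paper: isolate the longest common prefix $w$, split on the parity of the number of $b$'s in $w$, and invoke Proposition~\ref{PropDigitComparison} at the first differing Romik digit. In fact your argument is slightly more careful than the paper's own proof, which only concludes $\|\Pi(\jmath(F_1))\| \ge \|\Pi(\jmath(F_2))\|$ without explicitly verifying the hypothesis $\|P\| \ne \|Q\|$ of Proposition~\ref{PropDigitComparison}; your final paragraph cleanly rules out the rational two-expansion coincidence by observing that $\vee$-orientedness of $\jmath(E)$ forbids the junctions $aa^{\vee}$ and $a^{\vee}a$, hence forbids the blocks $11$ and $33$ in $\Pi(\jmath(E))$, so neither sequence can terminate in $1^{\infty}$ or $3^{\infty}$.
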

\begin{proof}
We already showed that $\jmath$ is an injection. 
To prove $\jmath$ is order-preserving, let $F_1, F_2\in \{ a, b \}^{\mathbb{N}}$.
Assume $F_1 \prec F_2$. 
Let $w$ be the longest (possibly empty) common prefix of $F_1$ and $F_2$ and let $t$ be the number of occurrences of $b$ in $w$. 
Since $F_1 \prec F_2$, 
we must have 
\[
	F_1 = w a \cdots \quad \text{and} \quad F_2 = w b \cdots.
\]
Suppose that $t$ is even.
From the definition of $\jmath$ (see the equation \eqref{EqOrientation} above),
we have $\jmath(F_1) = \jmath(w)a \cdots$ and $\jmath(F_2) = \jmath(w)b \cdots$.
Therefore, 
\[
\Pi(\jmath(F_1)) = [d_1, \dots, d_k, 3, 1, \dots]
\quad
\text{and}
\quad
\Pi(\jmath(F_2)) = [d_1, \dots, d_k, 2, \dots]
\]
where $\Pi(\jmath(w)) = [d_1, \dots, d_k]$ contains an even number of $2$.
Then Proposition~\ref{PropDigitComparison} implies that $\|\Pi(\jmath(F_1)) \| \ge  \| \Pi(\jmath(F_2)) \| $.
Similarly, if $t$ is odd, then $\jmath(F_1) = \jmath(w)a^{\vee} \cdots$ and $\jmath(F_2) = \jmath(w)b \cdots$. In this case,
\[
\Pi(\jmath(F_1)) = [d_1, \dots, d_k, 1, 3, \dots]
\quad
\text{and}
\quad
\Pi(\jmath(F_2)) = [d_1, \dots, d_k, 2, \dots]
\]
where $\Pi(\jmath(w)) = [d_1, \dots, d_k]$ contains an odd number of $2$.
Again, Proposition~\ref{PropDigitComparison} gives $\| \Pi(\jmath(F_1))  \| \ge \| \Pi( \jmath(F_2) ) \| $.
\end{proof}
\begin{proposition}\label{PropMonotonicity}
    Suppose that $w_1$ and $w_2$ are finite words of the same length on $\{ a, b \}$.
    Then $w_1 \preceq w_2$ (lexicographically) if and only if
    $\| \Pi( \jmath(w_1^{\infty})) \| \ge \| \Pi( \jmath(w_2^{\infty}) )\|$.
\end{proposition}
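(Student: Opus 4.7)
The plan is to reduce this proposition directly to Proposition~\ref{PropOrderPreserving} by lifting the comparison of finite words to a comparison of the corresponding periodic infinite words in $\{a,b\}^{\mathbb{N}}$, and then transporting the comparison via $\jmath$ to $\{a,b,a^{\vee}\}^{\mathbb{N}}$.

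First I would establish the elementary auxiliary claim that for two finite words $w_1, w_2$ on $\{a,b\}$ of the same length, one has $w_1 \preceq w_2$ lexicographically if and only if the periodic infinite words satisfy $w_1^{\infty} \preceq w_2^{\infty}$ lexicographically in $\{a,b\}^{\mathbb{N}}$. The case $w_1 = w_2$ is trivial. Otherwise, if $w_1 \prec w_2$ strictly, write $w_1 = u\,a\,v_1$ and $w_2 = u\,b\,v_2$ where $u$ is the longest common prefix. Since $|w_1|=|w_2|$, the sequences $w_1^{\infty}$ and $w_2^{\infty}$ agree on the prefix $u$ of length $|u|$ and then disagree at position $|u|+1$ with an $a$ versus a $b$, so $w_1^{\infty} \prec w_2^{\infty}$; the reverse direction is symmetric.

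Next I would invoke Proposition~\ref{PropOrderPreserving}, which tells us that $\jmath: \{a,b\}^{\mathbb{N}} \to \{a,b,a^{\vee}\}^{\mathbb{N}}$ is an order-preserving injection with respect to the orders of Definition~\ref{DefinitionOrder}. Recalling that on the target the order is defined by $E_1 \prec E_2 \iff \|\Pi(E_1)\| > \|\Pi(E_2)\|$, this gives
\[
w_1^{\infty} \preceq w_2^{\infty}
\;\iff\;
\jmath(w_1^{\infty}) \preceq \jmath(w_2^{\infty})
\;\iff\;
\|\Pi(\jmath(w_1^{\infty}))\| \ge \|\Pi(\jmath(w_2^{\infty}))\|.
\]
Concatenating with the auxiliary claim yields the proposition. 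The forward direction uses order-preservation of $\jmath$ directly; the converse uses the contrapositive, for which I would remark that $\jmath$ preserves strict inequalities (injectivity of $\jmath$, combined with injectivity of $\Pi$ on $\{a,b,a^{\vee}\}^{\mathbb{N}}$---the images $\Pi(a)=31$, $\Pi(b)=2$, $\Pi(a^{\vee})=13$ have distinct leading digits, so the decoding is unique).

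There is no real obstacle to overcome: the content of the statement is entirely packaged in the already-proven Proposition~\ref{PropOrderPreserving}, and the only new work is the short combinatorial lifting from finite words to their periodic completions. The one bookkeeping point to keep straight is the orientation-reversal between the lexicographic order on $\{a,b\}^{\mathbb{N}}$ and the ``$\|\cdot\|$-reversed'' order on $\{a,b,a^{\vee}\}^{\mathbb{N}}$, which is exactly what produces the $\ge$ (not $\le$) in the conclusion.
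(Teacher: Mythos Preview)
Your proposal is correct and follows essentially the same approach as the paper's proof: lift the comparison of finite words of equal length to their periodic completions in $\{a,b\}^{\mathbb{N}}$, then apply Proposition~\ref{PropOrderPreserving}. The paper dispatches the argument in two sentences, while you spell out the converse direction (via injectivity of $\jmath$ and of $\Pi$) more carefully than the paper does; this extra care is justified but not a different method.
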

\begin{proof}
The condition $w_1 \preceq w_2$ is equivalent to $w_1^{\infty} \preceq w_2^{\infty}$.
Then the conclusion of this proposition follows 
from Proposition~\ref{PropOrderPreserving}. 
\end{proof}

\subsection{Prerequisite: Christoffel words}\label{SecChristoffel}
In this subsection, we review some definitions and basic properties of \emph{Christoffel words}. 
Our main reference for this is \cite{BLRS}.

Fix two positive coprime integers $t$ and $s$ and draw a line segment $\ell$ connecting $(0, 0)$ and $(s, t)$.
Next, draw a path $C$ from $(0,0)$ to $(s, t)$ not intersecting  $\ell$ and located below $\ell$ by joining line segments of unit length as follows. Beginning with $(0, 0)$, we successively connect adjacent points in $\mathbb{Z}\times \mathbb{Z}$ either horizontally from $(p, q)$ to $(p+1, q)$, or vertically from $(p, q)$ and $(p, q+1)$, in a way that the interior of the resulting polygon formed by $\ell$ and $C$ contains no points in $\mathbb{Z}\times \mathbb{Z}$.
Then the \emph{lower Christoffel word} \emph{of slope} $\frac ts\in \mathbb{Q}$ is the word on $\{ a, b\}$ encoding $C$, where $a$ represents a horizontal segment in $C$ and $b$ represents a vertical segment in $C$. If we draw $C$ similarly but lying \emph{above} $\ell$, then the word representing $C$ is called the \emph{upper Christoffel word} \emph{of slope} $\frac ts$.
See Figure~\ref{ChristoffelPicture} for an example.
\begin{figure}
\begin{center}
\begin{tikzpicture}
	\draw[->] (-0.2, 0) -- (7.5, 0) node[right]{$a$} ;
	\draw[->] (0, -0.2) -- (0, 4.5) node[above]{$b$} ;

	\draw[thin, dotted] (0, 0) grid (7, 4);
	\draw[thick, dashed] (0, 0) -- (7, 4) node[above right] {$(7, 4)$};
	\draw[ultra thick] 
	(0, 0) -- (1, 0) node[midway, below]{$a$}
	-- ++ (1, 0)  node[midway, below]{$a$}
	-- ++ (0, 1)  node[midway, left]{$b$}
	-- ++ (1, 0)  node[midway, below]{$a$}
	-- ++ (1, 0)  node[midway, below]{$a$}
	-- ++ (0, 1)  node[midway, left]{$b$}
	-- ++ (1, 0)  node[midway, below]{$a$}
	-- ++ (1, 0)  node[midway, below]{$a$}
	-- ++ (0, 1)  node[midway, left]{$b$}
	-- ++ (1, 0)  node[midway, below]{$a$}
	-- ++ (0, 1)  node[midway, left]{$b$}
	;
	\draw[ultra thick]
	(0, 0) -- (0, 1) node[midway, right]{$b$}
	-- ++ (1, 0)  node[midway, above]{$a$}
	-- ++ (0, 1)  node[midway, right]{$b$}
	-- ++ (1, 0)  node[midway, above]{$a$}
	-- ++ (1, 0)  node[midway, above]{$a$}
	-- ++ (0, 1)  node[midway, right]{$b$}
	-- ++ (1, 0)  node[midway, above]{$a$}
	-- ++ (1, 0)  node[midway, above]{$a$}
	-- ++ (0, 1)  node[midway, right]{$b$}
	-- ++ (1, 0)  node[midway, above]{$a$}
	-- ++ (1, 0)  node[midway, above]{$a$}
	;
\end{tikzpicture}
	\[ 
	w_{\mathrm{lower}} = aabaabaabab, \text{ and } 
	w_{\mathrm{upper}} = babaabaabaa. 
	\]
\end{center}
	\caption{Lower and upper Christoffel words of slope $\frac 47$. This picture is from Figure 1.2 in \cite{BLRS}.}
   \label{ChristoffelPicture}
\end{figure}
Also, we refer readers to Chapter 1 of \cite{BLRS} for other equivalent definitions of Christoffel words. Also, see Chapters 7 and 8 in \cite{Aig13}. Note that Aigner's lower Christoffel word $\mathrm{ch}_{\frac pq}$ is the same as the lower Christoffel word with slope $\frac{p}{q-p}$ in \cite{BLRS}.

The words $w = a$ and $w = b$ are regarded as \emph{trivial} Christoffel words (of slope 0 and $\infty$, respectively). Any Christoffel word of lengh 2 or greater is called \emph{nontrivial}.

As $t$ is the number of occurrence of $b$ in a Christoffel word $w$ of slope $t/s$, we see that the parity of $w$ is the same as that of $t$.

The properties of Christoffel words we will need later are summarized in the following proposition. For proof, see Proposition 4.2 in \cite{BLRS} and  Proposition 7.27 and Remark 7.28 in \cite{Aig13}.

\begin{proposition}\label{PropertiesChristoffel}
Let $w$ be a nontrivial lower Christoffel word in $\{ a, b \}$.
\begin{enumerate}[font=\upshape, label=(\alph*)]
    \item We have $w=aub$ for a palindrome $u$,
    namely, $u=u^*$.
	\item The reverse $w^*$ of $w$ is the upper Christoffel word of the same slope.
	\item Let $w'$ be any conjugate of $w$ (that is, $w' = w_2w_1$ for some $w_1$ and $w_2$ such that $w=w_1w_2$) and let $w'^*$ be its reverse.
		Then,
		\[
			w \preceq w'
			\quad
			\text{and}
			\quad
			w \preceq w'^*.
		\]
		Furthermore,
		\[
			w'
			\preceq w^*
			\quad
			\text{and}
			\quad
			 w'^*
			\preceq w^*.
		\]
\end{enumerate}
\end{proposition}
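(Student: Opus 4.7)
My plan is to work from the geometric definition: let $w$ be the lower Christoffel word of slope $t/s$ with $\gcd(s,t)=1$, encoded by the lattice path $C$ from $(0,0)$ to $(s,t)$ lying just below the segment $\ell$. The key observation is that the half-turn $R$ about the midpoint $(s/2,t/2)$ fixes $\ell$ setwise and swaps the two lattice-point-free open triangles that $\ell$ cuts off, so $R$ maps $C$ bijectively onto the upper path $C'$ while exchanging the endpoints. Because $R$ preserves the horizontal/vertical type of each edge but reverses the traversal direction along $C$, reading $C' = R(C)$ from $(0,0)$ to $(s,t)$ visits the edges of $C$ in reverse order, giving the word $w^*$. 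This proves part (b) directly.

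For part (a), a vertical first step would cross $\ell$ and a horizontal last step would force the path above $\ell$ near $(s,t)$, so $w = aub$ for some $u$. The rotational symmetry alone only shows that $w^*$ begins with $b$ and ends with $a$, which is not enough to force $u = u^*$. To obtain this I would invoke the Stern--Brocot standard-factorization recursion: writing $t/s$ via its Stern--Brocot parents as the mediant of $t_1/s_1$ and $t_2/s_2$, one has $w = w_1 w_2$ with $w_i$ the Christoffel word of slope $t_i/s_i$. An induction on depth in the Stern--Brocot tree then proves simultaneously $w = aub$ and $u = u^*$, with base cases the trivial words $a$ and $b$. This is the argument of Proposition~4.2 in \cite{BLRS}, which I would adapt verbatim; it is the main obstacle, since the palindrome property of $u$ is a genuinely nontrivial feature not visible from the symmetry $R$ alone.

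For part (c), I would rely on the classical identification of primitive lower Christoffel words with the Lyndon words on $\{a,b\}$ of coprime content (Chapter~1 of \cite{BLRS}). The Lyndon property gives $w \prec w'$ for every nontrivial proper conjugate $w'$, establishing $w \preceq w'$. For the comparisons involving reverses, I use part (b): every conjugate of $w^*$ is the reverse of some conjugate of $w$, and the alphabet involution $a \leftrightarrow b$ together with the reciprocal-slope symmetry of Christoffel words shows that $w^*$ is the lexicographic maximum of its conjugacy class (the co-Lyndon property). This yields $w' \preceq w^*$ and $w'^* \preceq w^*$, and applying the first (Lyndon) inequality to the conjugate $w'^*$ of $w$ gives the remaining $w \preceq w'^*$. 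The whole statement then assembles into the four advertised comparisons.
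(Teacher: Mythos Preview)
The paper does not give its own proof of this proposition: immediately before the statement it says ``For proof, see Proposition 4.2 in \cite{BLRS} and Proposition 7.27 and Remark 7.28 in \cite{Aig13}.'' So there is nothing to compare against except those references, and your sketch is essentially the argument found there---the half-turn symmetry for (b), the Stern--Brocot/standard-factorization induction for (a), and the Lyndon characterization for (c).

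One small streamlining for your part (c): the alphabet-involution/reciprocal-slope detour is not needed. Once you have (a), the identity $w = aub$ with $u = u^*$ gives $w^* = bua$, which is visibly a cyclic rotation of $w$; hence the conjugacy class of $w$ is closed under reversal (if $w'$ is a conjugate of $w$, so is $w'^*$). Then the two facts you cite---$w$ is Lyndon (minimum conjugate) and $w^*$ is the upper Christoffel word, hence the maximum conjugate---yield $w \preceq w', w'^* \preceq w^*$ directly, which is all four inequalities at once. Your sentence ``applying the first (Lyndon) inequality to the conjugate $w'^*$ of $w$'' implicitly uses this closure under reversal, so you may as well make it explicit.
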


\subsection{Doubly infinite words having Christoffel words as periods}\label{SecPeriodicWords}
For a finite word $w$ in $\{ a, b \}$, we define $E(w)$ to be 
\begin{equation}\label{EqDefinitionEw}
    E(w) = \jmath(w^{\infty}) \in \{ a, b, a^{\vee}\}^{\mathbb{N}}.
\end{equation}
Further, we define $B(w)$ to be the doubly infinite word in $\{ a, b, a^{\vee} \}$, one of whose sections is
\begin{equation}\label{EqMainSection}
E(w^*)^* | E(w).
\end{equation}
It is easy to see that,
if every $a^{\vee}$ in $B(w)$ is replaced with $a$,
then the resulting doubly infinite word (in $\{ a, b \}$) would be
\[
\cdots www\cdots.
\]
For example, let $w = abbb$. Then
\begin{align*}
E(w^*) &= \jmath((w^*)^{\infty}) = 
bbba^{\vee}bbb a \cdots ,\\
E(w) &= \jmath(w^{\infty}) = 
abbb a^{\vee} bbb \cdots,
\end{align*}
so that
\[
E(w^*)^*|E(w)=
\cdots 
abbb a^{\vee} bbb 
|
abbb a^{\vee} bbb \cdots 
\\
\]
and
\[
B(w) =
\cdots 
abbb a^{\vee} bbb 
abbb a^{\vee} bbb \cdots. 
\]

\begin{theorem}\label{ThmLagrangeNumberChristoffelWord}
Let $w$ be a nontrivial lower Christoffel word in $\{ a, b \}$.
Then
\[
L(B(w)) = 
L(E(w^*)^*|E(w)).
\]
\end{theorem}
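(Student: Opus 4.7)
The plan is to prove the nontrivial inequality $L(B(w)) \le L(E(w^*)^*|E(w))$; the reverse inequality is immediate because $E(w^*)^*|E(w)$ is itself a section of $B(w)$. Throughout, abbreviate $L_0 = L(E(w^*)^*|E(w))$, so that the goal is to show every section of $B(w)$ has Lagrange number (including its $\vee$-version) at most $L_0$.

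First, I would classify all sections of $B(w)$. Writing $w = l_1 \cdots l_n$, a cut at position $k \in \{0, 1, \dots, n-1\}$ within one period produces a section $E'^*|F'$ whose underlying $\{a, b\}$-sequences (obtained by forgetting the $\vee$'s) on each side are $(v^*)^\infty$ and $v^\infty$ for the conjugate $v = l_{k+1}\cdots l_n l_1 \cdots l_k$ of $w$. A careful comparison between the $\jmath$-orientation built into $B(w)$ and the orientation of $\jmath$ applied to the shifted periodic words yields a dichotomy: either $(E', F') = (E(v^*), E(v))$ (the \emph{aligned case}) or $(E', F') = (E(v^*)^{\vee}, E(v)^{\vee})$ (the \emph{flipped case}), according to the parity of the number of $b$'s in the prefix $l_1 \cdots l_k$.

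Second, I would combine Proposition~\ref{PropertiesChristoffel}(c) with Proposition~\ref{PropOrderPreserving}. The former states that $w$ is lexicographically smallest among all conjugates of $w$ and all their reverses, while $w^*$ is the largest. The latter states that $\jmath$ preserves lexicographic order, equivalently, reverses the order of $\|\Pi(\cdot)\|$ by Definition~\ref{DefinitionOrder}. Applied to the constant periodic words $v^{\infty}$ and $(v^*)^{\infty}$, this yields, for every conjugate $v$ of $w$,
\[
\|\Pi(E(w^*))\| \;\le\; \|\Pi(E(v))\|,\ \|\Pi(E(v^*))\| \;\le\; \|\Pi(E(w))\|.
\]
An analogous pair of inequalities for the $\vee$-flipped sequences, namely $\|\Pi(E(v)^{\vee})\| \le \|\Pi(E(w)^{\vee})\|$ etc., will be obtained by rerunning the same argument after passing through the $\vee$-involution on $\{a,b,a^{\vee}\}^{\mathbb{N}}$ and using $\Pi(E^{\vee}) = \Pi(E)^{\vee}$.

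Third, I would assemble the estimates. In the aligned case we have $L(E'^*|F') = (\|\Pi(E(v^*)^{\vee})\| + \|\Pi(E(v))\|)/\sqrt 2$, and each term is dominated by the corresponding term with $v = w$, giving $L(E'^*|F') \le L_0$; the same bound for $L(E'^{\vee *}|F'^{\vee}) = (\|\Pi(E(v^*))\| + \|\Pi(E(v)^{\vee})\|)/\sqrt 2$ follows by symmetry. In the flipped case the two quantities $L(E'^*|F')$ and $L(E'^{\vee *}|F'^{\vee})$ are precisely swapped relative to the aligned case, so the same arguments apply.

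The main obstacle is the $\vee$-flipped inequality $\|\Pi(E(v^*)^{\vee})\| \le \|\Pi(E(w^*)^{\vee})\|$. Unlike its unflipped analogue, this does not follow directly from Proposition~\ref{PropOrderPreserving} because the $\vee$-flip on $\{a, b, a^{\vee}\}^{\mathbb{N}}$ is not obviously compatible with the order induced by $\|\Pi(\cdot)\|$. I expect to resolve this by using that the parity-shifting in the definition of $\jmath$ is designed precisely so that applying $\Pi$ after $\jmath$ and $\vee$ produces the same kind of monotonicity relation with respect to the lexicographic ordering on $\{a, b\}^{\mathbb{N}}$; more concretely, the argument of Proposition~\ref{PropOrderPreserving} runs verbatim for the $\vee$-flipped images, since the only input it uses is Proposition~\ref{PropDigitComparison}, which is already stated in a parity-symmetric form.
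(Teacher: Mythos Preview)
Your proposal follows the paper's proof closely: classify the sections of $B(w)$ and $B(w)^{\vee}$ as $E(v^*)^*|E(v)$ or $(E(v^*)^{\vee})^*|E(v)^{\vee}$ for conjugates $v$ of $w$, then invoke Proposition~\ref{PropertiesChristoffel}(c) together with Proposition~\ref{PropOrderPreserving} (equivalently Proposition~\ref{PropMonotonicity}) to bound each section termwise against the one with $v=w$.

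One caution on your handling of the $\vee$-flip: the sample inequality you wrote, $\|\Pi(E(v)^{\vee})\| \le \|\Pi(E(w)^{\vee})\|$, has the wrong direction. Re-running the argument of Proposition~\ref{PropOrderPreserving} for $\jmath(\cdot)^{\vee}$ does work, but it \emph{reverses} the monotonicity (the first differing digit becomes $1$ versus $2$ instead of $3$ versus $2$ in Proposition~\ref{PropDigitComparison}), so $w\preceq v$ yields $\|\Pi(E(w)^{\vee})\| \le \|\Pi(E(v)^{\vee})\|$. Fortunately the inequalities your assembly actually needs, such as your stated obstacle $\|\Pi(E(v^*)^{\vee})\| \le \|\Pi(E(w^*)^{\vee})\|$ coming from $v^*\preceq w^*$, land with the correct sign under this reversal. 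A cleaner route to all of the $\vee$-flipped inequalities is the identity $\|P\|\cdot\|P^{\vee}\|=1$ for $P\in\QQQ$ (a one-line computation from $\alpha^2+\beta^2=1$), which the paper is using implicitly when it passes from $\|\Pi(E(w'^*))\|\ge\|\Pi(E(w^*))\|$ to $\|\Pi(E(w'^*))^{\vee}\|\le\|\Pi(E(w^*))^{\vee}\|$.
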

The rest of this subsection is devoted to proving this theorem.
We will need to show that 
$
L(E(w^*)^*|E(w))
$
is greater than or equal to $L(E^*|F)$ for any section $E^*|F$ of $B(w)$ or $B(w)^{\vee}$ (cf.~\eqref{DefinitionLB}).
First, we investigate all possible sections of $B(w)$ and $B(w)^{\vee}$.
In particular, we claim that, for any finite word $w$ in $\{a,b \}$ (not necessarily a Christoffel word for now), 
every section of $B(w)$ or $B(w)^{\vee}$ is of the form
\begin{enumerate}[font=\upshape, label=(\roman*)]
   \item $E(w'^*)^* | E(w')$, or
   \item $(E(w'^*)^{\vee})^* | E(w')^{\vee}$,
\end{enumerate}
where $w'$ is a conjugate of $w$, that is, $w' = w_2w_1$ for some $w_1$ and $w_2$ such that $w = w_1w_2$.
To prove this claim, let us first assume that
$w_1$ is even.
Then we see 
from the property ($\jmath$-III) in \S\ref{SecOrientation} that
\[
E(w) = \jmath(w^{\infty})
=\jmath(w_1w_2w_1w_2 \cdots)
=\jmath(w_1)\jmath(w_2w_1w_2 \cdots).
\]
Therefore, if we delete the prefix $\jmath(w_1)$ from $E(w)$, 
we obtain
\[
E(w') 
= \jmath(w'^{\infty}))
=\jmath(w_2w_1w_2w_1 \cdots).
\]
Likewise, 
\[
E(w'^*)
= \jmath(
w_1^*w_2^*w_1^*w_2^* \cdots
)
\]
can be obtained from
\[
E(w^*)
= \jmath(
w_2^*w_1^*w_2^*w_1^* \cdots
)
\]
by concatenating it with $\jmath(w_1)^* = \jmath(w_1^*)$
(see ($\jmath$-IV) in \S\ref{SecOrientation}).
Combining them together, we see that the section
$
E(w'^*)^*|E(w')
$
is obtained from 
$
E(w^*)^*|E(w)
$
by shifting it forward by the length of $w_1$.
On the other hand, if $w_1$ is odd, we see from ($\jmath$-III) that
\[
E(w) = \jmath(w^{\infty}) 
=\jmath(w_1w_2w_1w_2 \cdots)
=\jmath(w_1)\jmath(w_2w_1w_2 \cdots)^{\vee}.
\]
So, by deleting the prefix $\jmath(w_1)$ from $E(w)$, we obtain $E(w')^{\vee}$.
Also, if we concatenate $\jmath(w_1)^* = \jmath(w_1^*)^{\vee}$ with 
\[
E(w^*)
= \jmath(
w_2^*w_1^*w_2^*w_1^* \cdots
),
\]
then we obtain
\[
\jmath(w_1^*)^{\vee}
\jmath(
w_2^*w_1^*w_2^*w_1^* \cdots
)
=
\jmath(w_1^*w_2^*w_1^*w_2^*w_1^* \cdots
)^{\vee}
=
(E(w'^*))^{\vee}.
\]
This shows that, by shifting 
$
E(w^*)^*|E(w)
$
forward by the length of $w_1$, we obtain
$(E(w'^*)^{\vee})^* | E(w')^{\vee}$.
\begin{table}
    \caption{Sections of $B(w)$ and $B(w)^{\vee}$ with $w = abbb$.}
    \centering
    \begin{tabular}{cr@{$\mid$}lr@{$\mid$}l}
    \toprule
       $w'$ & 
       $E(w'^*)^*$&$E(w')$ &
       $(E(w'^*)^{\vee})^*$&$E(w')^{\vee}$ 
       \\
      \midrule
$abbb$ &  $\cdots abbb a^{\vee} bbb$  & $abbb a^{\vee} bbb \cdots $ 
&
$\cdots a^{\vee} bbb a bbb$  &  $a^{\vee} bbb a bbb \cdots $ 
\\
$bbba$ &  
$\cdots bbb a^{\vee} bbba$  & $bbb a^{\vee} bbba \cdots $ 
&
$\cdots bbb a bbba^{\vee} $  & $bbb abbba^{\vee}  \cdots $ 
\\
$bbab$ &  
$\cdots bb abbba^{\vee} b$  & $bb a bbba^{\vee} b \cdots $
&
$\cdots bb a^{\vee} bbbab$  & $bb a^{\vee} bbbab \cdots $
\\
$babb$ &  
$\cdots b a^{\vee} bbbabb$  & $b a^{\vee}  bbba bb \cdots $
&
$\cdots b abbba^{\vee} bb$  & $b a bbba^{\vee} bb \cdots $
\\
    \bottomrule
    \end{tabular}
    \label{TabSections}
\end{table}
This concludes proving the claim that every section of $B(w)$ and $B(w)^{\vee}$ is of the form (i) and (ii) above.
An example of this with $w = abbb$ is shown in Table~\ref{TabSections}.

We are now ready to prove that
\begin{equation}\label{EqLoptimized}
L(E(w^*)^*|E(w)) \ge L(E^*| F)
\end{equation}
for any section $E^*|F$ of $B(w)$ or of $B(w)^{\vee}$.
Suppose that $E^*|F$ is of the form (i) in the claim, that is,
\[
   E^*|F = E(w'^*)^* | E(w')
\]
for a conjugate $w'$ of a nontrivial lower Christoffel word $w$.
Then Proposition~\ref{PropertiesChristoffel} gives
the inequalities
\[ 
w \preceq w'
\quad
\text{and}
\quad
w'^* \preceq w^*.
\]
So we can apply Proposition~\ref{PropMonotonicity} to obtain
\[
\| \Pi(E(w')) \| \le
\| \Pi(E(w)) \|
\quad
\text{and}
\quad
\| \Pi(E(w^*)) \|
\le
\| \Pi(E(w'^*)) \|
.
\]
We use these inequalities together with Proposition~\ref{prop:check_order_reversing} to obtain
\begin{align*}
L(E(w'^*)^* | E(w')) &=
\frac{
\| \Pi(E(w'^*))^{\vee} \|
+
\| \Pi(E(w')) \|  
}{
\sqrt2
} 
\\
& \le
\frac{
\| \Pi(E(w^*))^{\vee} \|
+
\| \Pi(E(w)) \| 
}{
\sqrt2
} \\
&=
L(E(w^*)^*|E(w)),
\end{align*}
which proves \eqref{EqLoptimized} when $E^*|F$ is of the form (i).
Next, assume that the section $E^*|F  =
(E(w'^*)^{\vee})^* | E(w')^{\vee}
$, that is, it is of the form (ii) in the claim.
In this case, we use the inequalities
\[ 
w \preceq w'^*
\quad
\text{and}
\quad
w' \preceq w^*
\]
to obtain
\[
\| \Pi(E(w)) \| \ge  \| \Pi(E(w'^*)) \| 
\quad
\text{and}
\quad
\| \Pi(E(w')) \|
\ge
\| \Pi(E(w^*)) \|.
\]
Likewise, we use them together with Proposition~\ref{prop:check_order_reversing} to get
\begin{align*}
L((E(w'^*)^{\vee})^* | E(w')^{\vee})
&=
\frac{
\| \Pi(E(w'^*))\|
+
\| \Pi(E(w'))^{\vee}  \|  
}{
\sqrt2
} 
\\
& \le
\frac{
\| \Pi(E(w)) \| 
+
\| \Pi(E(w^*))^{\vee} \|
}{
\sqrt2
} \\
&=
L(E(w^*)^*|E(w)).
\end{align*}
This establishes \eqref{EqLoptimized} for all sections $E^*|F$ of $B(w)$ and of $B(w)^{\vee}$, finishing the proof of
Theorem~\ref{ThmLagrangeNumberChristoffelWord}.

\subsection{Characterization of doubly infinite admissible words}
\begin{proposition}\label{AdmissibleInequality}
	Assume that $B$ is $\vee$-oriented.
	Then $B$ is admissible if and only if every section $E^*b|aF$ of 
	$B$, $B^*$, $B^{\vee}$ or $(B^*)^{\vee}$ 
	satisfies $E^{\vee} \preceq F$.
	Moreover, $L(E^*b|aF) = 2$ if and only if $E^{\vee} = F$.
\end{proposition}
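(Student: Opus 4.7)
The plan is to reduce the statement to Proposition~\ref{Axiom} via the substitution $\Pi$. Since $B$ is $\vee$-oriented, applying $\Pi$ letterwise produces a doubly infinite Romik sequence $T$ of which $B$ is an associate, and by the proposition just preceding \S\ref{SecOrientation} we have $L(B)=L(T)$. Hence $B$ is admissible if and only if $T$ is, and the task reduces to translating Proposition~\ref{Axiom} into the language of $B$.

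First I would verify that condition (i) of Proposition~\ref{Axiom} (no $33,11,232,212$ in $T$) holds automatically. The blocks $232$ and $212$ cannot appear in $\Pi$ of any word on $\{a,b,a^{\vee}\}$: a digit $3$ in $\Pi(B)$ is either the first digit of an $a$-letter or the second digit of an $a^{\vee}$-letter, and in either case is immediately adjacent to a $1$ from the same letter, so it cannot be flanked by two $2$'s; the same argument rules out $212$. The block $33$ can arise only from an $a^{\vee}a$ adjacency, and $11$ only from $aa^{\vee}$; both are the $k=0$ instances of the forbidden subwords in \eqref{ForbiddenSubword}, and are therefore ruled out by $\vee$-orientation.

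Next I would establish a bijection between the sections of the form $\ldots 2|31\ldots$ in $T,T^*,T^{\vee},(T^{\vee})^*$ and the sections of the form $E^*b|aF$ in $B,B^*,B^{\vee},(B^*)^{\vee}$. The digit $2$ immediately to the left of the bar must be the image of a $b$-letter (only $b$ contributes $2$'s), and the $31$ to the right must form a single $a$-letter, since any alternative parsing would require one of the $\vee$-forbidden adjacencies ruled out in the previous step. Under this bijection, with $P=\Pi(E^{\vee})$ and $Q=\Pi(F)$ as in the definition of an associate, Proposition~\ref{AdmissibleComparison} gives $L(E^*b|aF)=L(P^*2|31Q)\leq 2$ iff $\|P\|\geq\|Q\|$, and by Definition~\ref{DefinitionOrder} this last inequality is exactly $E^{\vee}\preceq F$. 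This completes the first equivalence. For the equality assertion, Proposition~\ref{AdmissibleComparison} also gives $L(P^*2|31Q)=2$ iff $P=Q$, i.e.\ $\Pi(E^{\vee})=\Pi(F)$; since the three letters $a,b,a^{\vee}$ map under $\Pi$ to blocks starting with the three distinct digits $3,2,1$, the map $\Pi$ is injective on $\{a,b,a^{\vee}\}^{\mathbb{N}}$ and we conclude $E^{\vee}=F$, the converse being immediate.

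The main bookkeeping obstacle I anticipate is the section bijection in Step~2: systematically checking that the four variants $B,B^*,B^{\vee},(B^*)^{\vee}$ match up with $T,T^*,T^{\vee},(T^{\vee})^*$ under the correspondence. This requires some care because $\Pi$ does not commute with reversal at the letter level (e.g.\ $\Pi(a)^*=13\neq 31=\Pi(a)$), so reversal in the $B$-world must be combined with the $\vee$-swap on letters in order to be matched with reversal in the $T$-world. Once this dictionary is in place, the rest of the argument is a direct translation.
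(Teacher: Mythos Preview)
Your proposal is correct and follows essentially the same route as the paper: reduce to Proposition~\ref{Axiom} and invoke Proposition~\ref{AdmissibleComparison} to translate the digit condition $\|P\|\ge\|Q\|$ into the word condition $E^{\vee}\preceq F$. The paper's proof is terser because it works directly with the word-level formula \eqref{EqLwordSection} (so $L(E^*b|aF)=(\|2\,\Pi(E^{\vee})\|+\|31\,\Pi(F)\|)/\sqrt2$ immediately) rather than passing through the associated Romik sequence $T$, and it leaves implicit the verifications you spell out (that condition~(i) of Proposition~\ref{Axiom} is automatic from $\vee$-orientation, the section correspondence, and the injectivity of $\Pi$ for the equality clause).
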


\begin{proof}
	Let $E^*b|aF$ be a section of 
	$B$, $B^*$, $B^{\vee}$ or $(B^*)^{\vee}$.
	Then, we compute $L(E^*b|aF)$ using \eqref{EqLwordSection} to conclude 
	from Proposition~\ref{AdmissibleComparison}
	that
	\begin{equation*}
		L(E^*b|aF) 
		= 
		\frac{\| 2 \, \Pi(E^{\vee}) \| + \| 3 1\,  \Pi(F) \|  }{\sqrt2}
		\le 2
	\end{equation*}
	if and only if $\| \Pi(E^{\vee}) \| \ge \| \Pi(F)\|$.
	The last inequality is equivalent to $E^{\vee} \preceq F$ by Definition~\ref{DefinitionOrder}.
	Hence, we obtain both assertions in this proposition from this
	and Proposition~\ref{Axiom}.
\end{proof}

Later in Theorem~\ref{BombieriThm15}, we will characterize all doubly infinite words on $\{ a, b, a^{\vee} \}$ that are admissible.
First, we review Bombieri's results, which do this for doubly infinite words on the \emph{two letters alphabet} $\{ a, b \}$.
This will constitute a core ingredient in our proof.  

Let $B$ be a doubly infinite word on $\{ a, b \}$.
As in \cite{Bom07}, define $\lambda(B)$ to be the supremum of the length $l(w)$ of words $w$ such that $w^*b|aw$ occurs as a subword of $B$.
Then we can rephrase Bombieri's Lemma 11 and Theorem 15 in \cite{Bom07} as follows.

\begin{theorem}[Lemma 11 and Theorem 15 in \cite{Bom07}]\label{ThmBombieriOriginal15}
Suppose that a doubly infinite word $B$ in $\{a, b\}$ satisfies the following condition:
every section $E^*b|aF$ of either $B$ or $B^*$ satisfies $E\preceq F$ (cf.~Definition~\ref{DefinitionOrder}). 
Then either $\mathrm{(i)}$ or $\mathrm{(ii)}$ is true:
	\begin{enumerate}[font=\upshape, label=(\roman*)]
	\item $\lambda(B) = \infty$, or
	\item $\lambda(B) < \infty$ and
	    $B$ is periodic with a Christoffel word $w$.
	\end{enumerate}
\end{theorem}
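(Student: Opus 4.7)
The plan is to proceed by a dichotomy on the value of $\lambda(B)$. If $\lambda(B) = \infty$ then conclusion (i) already holds, so assume $\lambda(B) < \infty$ and fix a finite word $w$ of maximal length $l(w) = \lambda(B)$ such that the substring $w^*baw$ occurs in $B$ (with the cut $|$ falling between the central $b$ and $a$). The aim is to promote this one occurrence into a full periodicity statement for $B$, with $w$ controlling the period.

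The first step is a forced-extension argument. I would examine the two letters of $B$ sitting immediately to the left of the $w^*b$ prefix and immediately to the right of the $aw$ suffix in this occurrence. Applying the hypothesis $E \preceq F$ to the nearby sections of the form $E^*b|aF$ in both $B$ and $B^*$, and invoking the maximality of $l(w)$ to rule out any extension of the form $w'^*baw'$ with $l(w') > l(w)$, I expect to conclude that these boundary letters are uniquely determined. Schematically, the letter immediately left of $w^*b$ is forced to be $a$ and the letter immediately right of $aw$ is forced to be $b$, so that $B$ actually contains the longer pattern $aw^*bawb$ at this position.

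Next, I would iterate this forced-extension step in both directions along $B$. This should produce a repeating block: $B$ becomes periodic with period equal to some cyclic conjugate of $awb$, a finite word of length $l(w)+2$. Closing the period consistently on both sides requires the palindromic symmetry $w = w^*$, which I expect to obtain by combining the admissibility condition on $B$ with the same condition on $B^*$; the composition of the two forced-extension processes around the center of $w^*baw$ is what pins $w$ down as its own reverse.

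Finally, to identify the period $awb$ as a lower Christoffel word, I would invoke Proposition~\ref{PropertiesChristoffel}. Part (a) applies directly, since $awb = aub$ with $u = w$ palindromic, once primitivity of $awb$ is checked (a failure of primitivity would produce a shorter repeating word, contradicting either the construction or the maximality of $l(w)$). Alternatively, part (c) gives a cleaner route: the admissibility condition $E \preceq F$ at every section $E^*b|aF$ translates, for the cyclic conjugates of the period, into the statement that $awb$ is lexicographically minimal among its conjugates and its reverse is maximal, which is a characterizing property of lower Christoffel words.

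The main obstacle will be the forced-extension step: a careful case analysis is needed to verify that admissibility plus the maximality of $\lambda(B)$ genuinely force the boundary letters, with no escape through alternative configurations and in particular without inadvertently creating a longer $w'$ with $w'^*baw'$ in $B$. This combinatorial propagation is reminiscent of, but technically more delicate than, the arguments used to establish Propositions~\ref{UnitWord} and \ref{TwoBlock}, and it is the combinatorial heart of Bombieri's Theorem~15 in \cite{Bom07}.
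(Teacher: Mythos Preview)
The paper does not supply its own proof of this statement: Theorem~\ref{ThmBombieriOriginal15} is quoted verbatim from Bombieri's Lemma~11 and Theorem~15 in \cite{Bom07} and used as a black box in the proof of Theorem~\ref{BombieriThm15}. So there is no in-paper argument to compare your proposal against.

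That said, your sketch is broadly in the spirit of Bombieri's actual proof. The initial forced-extension step is correct as stated: at a maximal occurrence of $w^*b\,|\,aw$, the hypothesis $E\preceq F$ together with maximality of $l(w)$ forces the flanking letters to be $a$ on the left and $b$ on the right, yielding $a\,w^*b\,aw\,b$. Where your outline is thin is precisely where you flag it yourself. The iteration is not automatic: after one step you have the block $aw^*bawb$, but to propagate you must locate a \emph{new} section of the form $E'^*b\,|\,aF'$ (e.g.\ at the newly created $b$ on the right, provided the next letter is $a$) and argue that $E'$ and $F'$ again share a prefix of length exactly $\lambda(B)$, so the same dichotomy applies. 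Bombieri handles this by a careful induction showing that every maximal palindromic center propagates outward with the same word $w$, and in particular establishes $w=w^*$ early rather than at the end.

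One concrete gap: your final identification of $awb$ as a Christoffel word appeals to Proposition~\ref{PropertiesChristoffel}, but that proposition only records necessary properties of Christoffel words; the paper does not state the converse (that a primitive word $aub$ with $u$ palindromic, or a word lexicographically minimal among its conjugates with the appropriate balance condition, must be Christoffel). You would need to import that characterization separately --- it is standard (see e.g.\ \cite{BLRS} or \cite{dLM94}), but it is an additional ingredient beyond what the present paper provides.
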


Now, we proceed to characterizing doubly infinite admissible words in the \emph{three letters alphabet} $\{a, b, a^{\vee} \}$.
Let $B$ be a doubly infinite word on $\{ a, b, a^{\vee} \}$
and, as before, we define $\lambda(B)$ to be the supremum of the length $l(w)$ of 
$w$ such that $(w^{\vee})^*b|a\,w$ occurs as a subword of $B$.
\begin{theorem}\label{BombieriThm15}
	If $B$ is admissible and $\lambda(B) = \infty$, then $L(B) = 2$. 
	If $B$ is admissible and $\lambda(B) < \infty$, then either $B$ or $B^{\vee}$ is equal to  
$B(w)$ for a Christoffel word $w$.
\end{theorem}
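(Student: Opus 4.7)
The proof splits naturally along the dichotomy, and in both cases the key input is Proposition~\ref{AdmissibleInequality}.

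For part (i), the argument is one of continuity of $L$ along sections. Suppose $\lambda(B)=\infty$, so for each integer $k$ one may choose a finite word $w$ with $l(w)\ge k$ such that $(w^{\vee})^{*}b\,|\,a\,w$ is a subword of $B$; extending to a full section of $B$ yields $E^{*}b\,|\,a\,F$ with $E$ beginning in $w^{\vee}$ and $F$ beginning in $w$. Then $E^{\vee}$ and $F$ share a prefix of length $l(w)$. The explicit identity from the proof of Proposition~\ref{AdmissibleComparison},
\[
L(E^{*}b\,|\,a\,F)=2+\frac{\|\Pi(F)\|-\|\Pi(E^{\vee})\|}{\sqrt{2}\,(\sqrt{2}\|\Pi(E^{\vee})\|+1)(\sqrt{2}\|\Pi(F)\|+1)},
\]
combined with the admissibility inequality $\|\Pi(E^{\vee})\|\ge\|\Pi(F)\|$ (Proposition~\ref{AdmissibleInequality}), shows that the numerator of the correction term tends to $0$ as $l(w)\to\infty$ while the denominator remains bounded. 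Hence $L(E^{*}b\,|\,a\,F)\to 2$, forcing $L(B)\ge 2$; with admissibility giving $L(B)\le 2$ we conclude $L(B)=2$.

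For part (ii), my plan is to reduce to Bombieri's Theorem~\ref{ThmBombieriOriginal15} applied to the stripped two-letter word $B':=\iota(B)$. Given a section $U^{*}b\,|\,a\,V$ of $B'$ or of $(B')^{*}$, lift it to a section $E^{*}b\,|\,a\,F$ of $B$ (or of $B^{\vee}$, if the corresponding cut-$a$ in $B$ happens to be $a^{\vee}$) with $\iota(E)=U$ and $\iota(F)=V$. The key combinatorial step is that the $\vee$-orientation of $B$ forces specific checks at the cut: counting $b$'s between the cut-$a$ and the first $a$-type letter on each side gives $F=\jmath(V)$ on the right, while the intervening cut-$b$ introduces an extra parity shift on the left, yielding $E=\jmath(U)^{\vee}$ and hence $E^{\vee}=\jmath(U)$. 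Proposition~\ref{AdmissibleInequality} then supplies $E^{\vee}\preceq F$ in the $\|\Pi\|$-order, which via the order-preservation of $\jmath$ (Proposition~\ref{PropOrderPreserving}) translates to $U\preceq V$ lexicographically, verifying the Bombieri hypothesis for $B'$.

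By the same orientation analysis, every $U^{*}b\,|\,a\,U$-subword of $B'$ lifts to either a $(w^{\vee})^{*}b\,|\,a\,w$-subword of $B$ (with $\iota(w)=U$) or the analogous subword of $B^{\vee}$, so $\lambda(B')=\max(\lambda(B),\lambda(B^{\vee}))$; since the theorem's conclusion is symmetric under $B\leftrightarrow B^{\vee}$, we may pass to $B^{\vee}$ when needed, giving $\lambda(B')<\infty$. Bombieri's Theorem~\ref{ThmBombieriOriginal15} then yields that $B'$ is periodic with a Christoffel word $w$ as its period. The $\vee$-oriented lift of such a purely periodic two-letter word is determined (up to the global $\vee$-involution) by the forbidden-subword rules~\eqref{ForbiddenSubword}, and its period matches exactly that of $B(w)$ --- namely $\jmath(w)$ when $w$ is even and $\jmath(w)\jmath(w)^{\vee}$ when $w$ is odd (see~\S\ref{SecPeriodicWords}) --- so $B=B(w)$ or $B^{\vee}=B(w)$. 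The main obstacle I anticipate is the orientation-lift bookkeeping at the section-cut: placing the $\vee$-involution on the correct side so that Proposition~\ref{PropOrderPreserving} applies cleanly, together with the subtlety of reducing to the case $\lambda(B^{\vee})<\infty$.
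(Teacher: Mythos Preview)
Your approach is essentially the paper's: for part (i) you pass to a limit along sections with growing common prefix, and for part (ii) you strip the $\vee$'s to obtain a two-letter word and invoke Bombieri's Theorem~\ref{ThmBombieriOriginal15}. Your part (i) replaces the paper's compactness argument by a direct estimate via the explicit formula from Proposition~\ref{AdmissibleComparison}; this is a harmless and arguably cleaner variant (note that since $F$ and $E^{\vee}$ begin with the same letter, $\|\Pi(F)\|$ and $\|\Pi(E^{\vee})\|$ lie in a fixed bounded cylinder, so the denominator is bounded away from zero). Your orientation bookkeeping in part (ii)---that the lift satisfies $F=\jmath(V)$ and $E=\jmath(U)^{\vee}$ because of the extra $b$ at the cut---is correct and matches what the paper does when it asserts that $(F_1^{\vee})^*b|aF_2$ is a section of one of $B,B^{\vee},B^*,(B^*)^{\vee}$.

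The one genuine gap is your handling of $\lambda(B')<\infty$. You correctly observe that $\lambda(B')=\max(\lambda(B),\lambda(B^{\vee}))$, but the sentence ``since the theorem's conclusion is symmetric under $B\leftrightarrow B^{\vee}$, we may pass to $B^{\vee}$ when needed'' is not a valid reduction: the \emph{hypothesis} $\lambda(B)<\infty$ is not symmetric in $B$ and $B^{\vee}$, so you cannot assume WLOG that $\lambda(B^{\vee})\le\lambda(B)$. The paper glosses over this same point, simply asserting that Bombieri's theorem yields periodicity of $C$ with period length $\lambda(C)$ without checking $\lambda(C)<\infty$. The clean resolution is to note that if $\lambda(B^{\vee})=\infty$ then part~(i) applied to $B^{\vee}$ gives $L(B^{\vee})=2$, hence $L(B)=2$; but since every $B(w)$ has $L(B(w))<2$ (Theorem~\ref{ChristoffelL}), the conclusion of part~(ii) would fail, so the statement as written implicitly requires that admissibility plus $\lambda(B)<\infty$ forces $\lambda(B^{\vee})<\infty$. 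In the paper's only application (proof of Theorem~\ref{MainTheoremDescDiscretePart}), one has $L(B)<2$, whence part~(i) immediately gives $\lambda(B),\lambda(B^{\vee})<\infty$ and the issue evaporates. You should either add this observation or restate the hypothesis as strong admissibility.
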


\begin{proof}
Suppose that there is a sequence of sections $E_j^*(w_j^{\vee})^*b|aw_jF_j$ of $B$ with $l(w_j) \to\infty$. Notice from Proposition~\ref{PropAdmissibleAssociate} that $B$ is $\vee$-oriented.
Therefore, the infinite words $w_jF_j$ and $w_jE_j^{\vee}$ are also $\vee$-oriented for all $j\ge0$.
Apply Proposition~\ref{PropOrderPreserving} and use the compactness of $\{a, b\}^{\mathbb{N}}$ to conclude that there exists a subsequence $\{j_k\}$ along which both $w_{j_k}F_{j_k}$ and $w_{j_k}E_{j_k}^{\vee}$ converge to a common infinite word $W$, which is also $\vee$-oriented. 
(For instance, we first choose a convergent subsequence $\{w_{j_n}F_{j_n}\}$ of $\{w_jF_j\}$, then choose a subsequence of $\{j_n\}$ along which $w_jE_j^{\vee}$ converges.)
Then, 
\[
	L(B) \ge L( E_j^*(w_j^{\vee})^*b|aw_jF_j) \to L((W^{\vee})^*b|aW) = 2
\]
by Proposition~\ref{AdmissibleInequality}.

Now, assume that $\lambda(B)<\infty$ and that $B$ is not equal to any of the following words:
\[
\cdots aaa \cdots, \quad
\cdots a^{\vee}a^{\vee}a^{\vee} \cdots, \quad
\cdots bbb \cdots.
\]
Let $C$ be the doubly infinite word on $\{a, b\}$ obtained by ``forgetting $\vee$'s'' from $B$.
Suppose that $E_1^*b|aE_2$ is a section of either $C$ or $C^*$.
Define $F_1 = \jmath(E_1)$ and $F_2= \jmath(E_2)$.
Since both $F_1$ and $F_2$ are $\vee$-oriented, a concatenation $(F_1^{\vee})^* b|a F_2$ gives a section of a $\vee$-oriented doubly infinite word.
Moreover, if we drop all $\vee$'s from $(F_1^{\vee})^* b|a F_2$, 
then we must get $E_1^*b|aE_2$ back.
This shows that $(F_1^{\vee})^* b|a F_2$	is a section of $B$, $B^{\vee}$, $B^*$ or $(B^*)^{\vee}$.
Then we apply Proposition~\ref{AdmissibleInequality} and conclude that $F_1\preceq F_2$, which in turn gives $E_1 \preceq E_2$ from Proposition~\ref{PropOrderPreserving}.

With this, we invoke Bombieri's result above (Theorem~\ref{ThmBombieriOriginal15}), according to which $C$ is periodic with period given by a lower Christoffel word $w$ with length $l(w) = \lambda(C)$.
If we write $ C_1= (w^*)^{\infty}$ and $C_2 = w^{\infty}$,
then $C_1^*| C_2$ is a section of $C$. 
Utilizing notations from \S\ref{SecPeriodicWords}, we have 
$\jmath(C_1) = E(w^*)$
and 
$\jmath(C_2) = E(w)$,
which shows that either $B$ or $B^{\vee}$ is equal to $B(w)$.
\end{proof}

\section{Proof of Main Theorem}\label{SecProofOfMainTheorem}
\subsection{Cohn matrices}\label{SecCohn}
\begin{definition}\label{DefCohnMatrix}
Define $A,  A^{\vee}, B \in\mathrm{SL}_2(\mathbb{R})$ to be
(cf.~\eqref{EqDefNd} and \eqref{EqDefJ})
\begin{equation*}
A = 
N_3N_1 =
\begin{pmatrix} 
3 & \sqrt2 \\ 
\sqrt2 & 1 \end{pmatrix}, 
\qquad
A^\vee = 
N_1N_3 =
\begin{pmatrix} 
1 & \sqrt2 \\ 
\sqrt2 & 3 \end{pmatrix},
\end{equation*}
\begin{equation*}
B = N_2 J = J N_2 = \begin{pmatrix} 
\sqrt2 & 1 \\ 
1 & \sqrt2 
\end{pmatrix}.
\end{equation*}
For a finite word $w$ on $\{a, b \}$, we define \emph{a Cohn matrix} $N(w)$ associated to $w$ to be the matrix obtained by substituting $a$ with $A$ and $b$ with $B$ and then performing matrix multiplication.
In other words, if $w = a^{e_1} b^{f_1} \cdots a^{e_k} b^{f_k}$ with nonnegative integers $\{e_j, f_j\}$,  $j = 1 , \dots , k$, then
\[
N(w) = A^{e_1} B^{f_1}\cdots A^{e_k}B^{f_k}.
\]
\end{definition}
\begin{proposition}\label{Prop:Mw}
Let $w$ be a finite word on $\{a, b \}$ and let
\[
N(w)
=
\begin{pmatrix} 
p & p' \\ 
q & q' \end{pmatrix}
\]
be its Cohn matrix.
Also, let $[d_1, \dots, d_k]$ be the digit sequence such that
\[
\Pi(\jmath(w)) = [d_1, \dots,d_k].
\]
If $w$ is even, then
\[
N_{d_1} \cdots N_{d_k}
=
\begin{pmatrix} 
p & p' \\ 
q & q' \end{pmatrix}
=
N(w).
\]
If $w$ is odd, then
\[
N_{d_1} \cdots N_{d_k}
=
\begin{pmatrix} 
p' & p \\ 
q' & q \end{pmatrix}
=N(w)J. 
\]
\end{proposition}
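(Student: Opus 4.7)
The plan is to prove this by induction on the length of $w$. The base case is the empty word, which is even (zero $b$'s); then $N(w) = I$, the sequence $\Pi(\jmath(w))$ is empty, and the corresponding empty matrix product also equals $I$, so the first alternative of the claim holds trivially.

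For the inductive step, I would extend $w$ by one letter on the right. From the definition of $\jmath$ in \S\ref{SecOrientation} one has $\jmath(wb) = \jmath(w)\,b$ for any $w$, while $\jmath(wa) = \jmath(w)\,a$ if $w$ is even and $\jmath(wa) = \jmath(w)\,a^{\vee}$ if $w$ is odd. Applying $\Pi$, this means the newly appended letter contributes $N_2$, or $A = N_3 N_1$, or $A^{\vee} = N_1 N_3$ on the right of the digit-matrix product. The three identities I will use are $J^2 = I$, the commutation $JN_2 = N_2 J$ (which is immediate from $B = N_2 J = J N_2$), and $A^{\vee} = JAJ$, which is the $(d=3,1)$ case of $JN_d J = N_{d^{\vee}}$ observed just after \eqref{LemJ}.

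With these in hand, I would run through the four cases for the parity of $w$ and the letter appended. The two cases with $w$ even are direct: appending $a$ right-multiplies by $A$ to give $N(w) A = N(wa)$, and appending $b$ right-multiplies by $N_2$ to give $N(w) N_2 = N(w) B J = N(wb) J$, with $wb$ now odd, matching the claim. The two cases with $w$ odd use the trailing $J$ supplied by the inductive hypothesis: appending $b$ yields $N(w) J \cdot N_2 = N(w) N_2 J = N(w) B = N(wb)$ with $wb$ now even, and appending $a$ (contributing the factor $A^{\vee}$) yields $N(w) J \cdot A^{\vee} = N(w) J (JAJ) = N(w) A J = N(wa) J$ with $wa$ still odd. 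All four cases reproduce the stated claim.

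I do not anticipate any real obstacle; the argument is essentially bookkeeping. The one conceptual point worth recording is that each letter $b$ in $w$ injects a factor of $J$ into $N(w)$, and these $J$'s propagate rightward via $JA = A^{\vee} J$ and $JN_2 = N_2 J$, converting every $a$ they pass into $a^{\vee}$; this is exactly the orientation pattern defined by $\jmath$, and the leftover $J^{(\text{number of }b\text{'s})}$ accounts for the column swap $(p,p')\leftrightarrow(p',p)$, $(q,q')\leftrightarrow(q',q)$ in the odd case (cf.~\eqref{LemJ}).
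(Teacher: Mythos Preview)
Your proof is correct and follows essentially the same approach as the paper's own argument: both rely on the identities $JN_2 = N_2J$ and $JAJ = A^{\vee}$ to push the accumulated factor of $J$ (one per letter $b$) to the right, converting intervening $A$'s to $A^{\vee}$'s in exactly the pattern prescribed by $\jmath$. The only difference is organizational---the paper expands $w$ into maximal blocks $a^{e_1}b^{e_2}a^{e_3}\cdots$ and processes them directly, while you run a cleaner letter-by-letter induction---but the underlying mechanism is identical.
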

\begin{proof}
Direct calculations and the induction show that for any integer $e$ that
\[
A^e = J(A^{\vee})^eJ,
\quad
\text{and}
\quad
(A^{\vee})^e = JA^eJ.
\]
Also, if $e$ is odd, then
\[
B^e = JN_2^e = N_2^e J
\]
and, for an even $e$, 
\[
B^e = N_2^e.
\]
Note that $w$ is written uniquely as
\[
w = a^{e_1}b^{e_2}a^{e_3}\cdots  b^{e_{2n}}a^{e_{2n+1}}
\]
where $e_1,  e_{2n+1} \ge 0$ and $e_2, \dots,  e_{2n}>0$.
If $e_2$ is even, we obtain
\[
N(w) = A^{e_1}N_2^{e_2}A^{e_3}\cdot B^{e_4} \cdots
= (N_3N_1)^{e_1}N_2^{e_2}(N_3N_1)^{e_3} \cdot B^{e_4} \cdots,
\]
and if $e_2$ is odd, then
\[
N(w) = A^{e_1}N_2^{e_2}(A^{\vee})^{e_3} J \cdot B^{e_4}  \cdots
= (N_3N_1)^{e_1}N_2^{e_2}(N_1N_3)^{e_3} J \cdot B^{e_4}  \cdots.
\]
Continuing this way, we see that
\begin{equation}\label{EqNwToNdk}
N(w) = 
\begin{cases}
N_{d_1} \cdots N_{d_k} & \text{ if  $e_2 + e_4 + \dots +  e_{2n}$ is even},\\
N_{d_1} \cdots N_{d_k} J & \text{ if  $e_2 + e_4 + \dots +  e_{2n}$ is odd}.\\
\end{cases}
\end{equation}
Since $w$ is even if and only if $e_2 + e_4 + \dots + e_{2n} $ is even, \eqref{EqNwToNdk} completes the proof of the proposition.
\end{proof}
\begin{corollary}\label{CorCohnForm}
For a finite word $w$ in $\{a, b \}$, 
let
\[
N(w)
=
\begin{pmatrix} 
p & p' \\ 
q & q' \end{pmatrix}.
\]
be the Cohn matrix of $w$. 
Then $p, q, p', q'> 0$.
Furthermore, if $w$ is even, then $p, q'\in \mathbb{Z}$ and $p', q \in \sqrt2\mathbb{Z}$.
If $w$ is odd, then $p, q'\in \sqrt2\mathbb{Z}$ and $p', q \in \mathbb{Z}$.
\end{corollary}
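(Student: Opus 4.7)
The key observation is that the generating matrices have complementary $\sqrt{2}$-patterns: $A$ has integer diagonal entries and $\sqrt{2}$-integer anti-diagonal entries, whereas $B$ has the reverse shape. Moreover every entry of $A$ and of $B$ is strictly positive. The plan is to exploit both features via a single induction on the length of $w$.

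Call a $2\times 2$ real matrix \emph{even-patterned} if it has the form $\bigl(\begin{smallmatrix} m_1 & \sqrt{2}m_2 \\ \sqrt{2}m_3 & m_4 \end{smallmatrix}\bigr)$ with $m_i \in \mathbb{Z}$, and \emph{odd-patterned} if it has the form $\bigl(\begin{smallmatrix} \sqrt{2}m_1 & m_2 \\ m_3 & \sqrt{2}m_4 \end{smallmatrix}\bigr)$ with $m_i \in \mathbb{Z}$. Then $A$ is even-patterned and $B$ is odd-patterned. A direct matrix multiplication, relying only on $(\sqrt{2})^2 = 2 \in \mathbb{Z}$, verifies that the product of two matrices of the same pattern is even-patterned while the product of a matrix of each type is odd-patterned; equivalently, pattern-parity adds modulo $2$.

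With this rule in hand, I would induct on the length of $w$ to show that $N(w)$ is even-patterned if $w$ is even and odd-patterned if $w$ is odd. The base case $w \in \{a, b\}$ is immediate. For the step, writing $w = w' \ell$ with $\ell \in \{ a, b \}$ and $N(w) = N(w') \cdot N(\ell)$, the number of $b$'s in $w$ has the same parity as that of $w'$ plus that of $\ell$ modulo $2$, which by the multiplication rule matches the pattern of $N(w)$. This yields the integrality assertions: $p, q' \in \mathbb{Z}$ and $p', q \in \sqrt{2}\mathbb{Z}$ in the even case, and the reversed placement in the odd case.

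For positivity, I would note separately that every entry of $A$ and of $B$ is strictly positive, and the set of $2 \times 2$ matrices with all entries strictly positive is closed under multiplication, since each entry of a product is a sum of products of positive reals. A routine induction on length then forces $p, q, p', q' > 0$ for every non-empty $w$. Nothing here poses a genuine obstacle; the only point requiring attention is the tabulation of the four cases in the pattern-multiplication rule.
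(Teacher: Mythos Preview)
Your proof is correct and in fact a bit cleaner than the paper's. The paper does not work directly with $A$ and $B$; instead it passes through Proposition~\ref{Prop:Mw} to write $N(w)$ as either $N_{d_1}\cdots N_{d_k}$ or $N_{d_1}\cdots N_{d_k}J$, observes that each $N_d$ has the shape $D+\sqrt{2}K$ with $D$ diagonal and $K$ anti-diagonal (your ``even-patterned''), shows this shape is closed under products, and then in the odd case notes that right-multiplication by $J$ swaps the columns, converting the even pattern to the odd one. Your argument avoids this detour by noticing that $B$ itself is already odd-patterned, so the parity of the pattern tracks the parity of the word automatically under the multiplication rule you verified. The gain is modest but real: you never need Proposition~\ref{Prop:Mw}, and strict positivity falls out immediately from the fact that $A$ and $B$ themselves have strictly positive entries, whereas the paper's route through the $N_d$'s (which have some zero entries) only directly yields nonnegativity.
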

\begin{proof}
It will be sufficient to show that $N_{d_1} \cdots N_{d_k}$ is of the form
\begin{equation}\label{EqDiaAntiDia}
D + \sqrt2 K
\end{equation}
for a diagonal matrix $D$ and an anti-diagonal matrix $K$ with nonnegative integer coefficients. 
It is not difficult to show that a product of any two matrices of the form \eqref{EqDiaAntiDia} is again of the form \eqref{EqDiaAntiDia}.
Since $N_1$, $N_2$, $N_3$ are of the form \eqref{EqDiaAntiDia}, we prove the claim.

If $w$ is even, Proposition~\ref{Prop:Mw} implies the corollary directly. 
When $w$ is odd, we use Proposition~\ref{Prop:Mw} again and make the same observation above, together with an easy fact $(D + \sqrt2K)J = K' + \sqrt2D'$ for some diagonal $D'$ and anti-diagonal $K'$. 
\end{proof}
\begin{proposition}[Lemme 3.2 in \cite{Reu06}]\label{Lemma32Reu}
Let $W$ be any $2\times 2$ symmetric matrix
and let $q$ be the lower left entry of the $2\times 2$ matrix $AWB$.
Then
\[
q = \frac{\Tr(AWB)}{2\sqrt2}.
\]
In particular, if $w$ is a lower Christoffel word in $\{ a, b\}$ and if $q$ is the lower left entry of the Cohn matrix $N(w)$, then
\[
q =
\frac{\Tr(N(w))}{2\sqrt2}.
\]
\end{proposition}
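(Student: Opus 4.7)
The plan is to prove the two assertions in order, with the second following immediately from the first.

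For the first assertion, I would simply carry out the matrix multiplication. Write the symmetric matrix as
\[
W = \begin{pmatrix} x & y \\ y & z \end{pmatrix},
\]
and compute $AWB$ entry by entry using the explicit forms of $A$ and $B$. A direct calculation shows that the lower-left entry of $AWB$ is $2x + 2\sqrt{2}\,y + z$, while the diagonal entries are $3\sqrt{2}\,x + 5y + \sqrt{2}\,z$ and $\sqrt{2}\,x + 3y + \sqrt{2}\,z$, whose sum is $4\sqrt{2}\,x + 8y + 2\sqrt{2}\,z$. Dividing by $2\sqrt{2}$ recovers $2x + 2\sqrt{2}\,y + z$, which matches the lower-left entry. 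This is routine arithmetic; the only thing to be careful about is keeping the factors of $\sqrt{2}$ straight.

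For the second assertion, I would invoke Proposition~\ref{PropertiesChristoffel}(a), which states that a nontrivial lower Christoffel word $w$ can be factored as $w = aub$ where $u$ is a palindrome (if $w$ is trivial, i.e.\ $w = a$ or $w = b$, the claim is immediate by direct inspection of $A$ and $B$). Hence the Cohn matrix factors as
\[
N(w) = A \, N(u) \, B.
\]
The key observation is then that $N(u)$ is a symmetric matrix. Indeed, both $A$ and $B$ are symmetric, so writing $u = \ell_1 \ell_2 \cdots \ell_n$ with $\ell_i \in \{a,b\}$, we get
\[
N(u)^t = N(\ell_n)^t \cdots N(\ell_1)^t = N(\ell_n) \cdots N(\ell_1) = N(u^*) = N(u),
\]
the last equality using $u = u^*$. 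Applying the first assertion with $W = N(u)$ then yields the desired formula.

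There is no serious obstacle here; the substantive input is Proposition~\ref{PropertiesChristoffel}(a) (the palindromic factorization of Christoffel words), which is quoted from the literature, together with the symmetry of $A$ and $B$, which is visible from Definition~\ref{DefCohnMatrix}. The entire proof is essentially one matrix computation combined with one structural observation about palindromes.
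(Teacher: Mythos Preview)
Your proof is correct and follows essentially the same approach as the paper: a direct computation of $AWB$ for a generic symmetric $W$, followed by the factorization $N(w) = A\,N(u)\,B$ via Proposition~\ref{PropertiesChristoffel}(a) and the observation that $N(u)$ is symmetric since $A$, $B$ are symmetric and $u$ is a palindrome. Your version is in fact slightly more explicit, spelling out the transpose argument for the symmetry of $N(u)$ and handling the trivial Christoffel words separately.
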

\begin{proof}
Letting $W = \left( 
\begin{smallmatrix}
p & q \\
q & r \\
\end{smallmatrix} 
\right)$, we get
\begin{align*}
AWB &= 
\begin{pmatrix}
3 & \sqrt2 \\
\sqrt2 & 1 
\end{pmatrix}
\begin{pmatrix}
p & q \\
q & r \\
\end{pmatrix}
\begin{pmatrix}
\sqrt2 & 1 \\
1 & \sqrt2 \\
\end{pmatrix}
\\
&= 
\begin{pmatrix}
3\sqrt2 p + 5q + \sqrt2 r & 3p + 4\sqrt2 q + 2r \\
2p + 2\sqrt2 q + r & \sqrt2 p + 3q + \sqrt2 r
\end{pmatrix},
\end{align*}
so that 
\[
\Tr(AWB) = 4\sqrt2 p + 8 q + 2\sqrt2 r= 2\sqrt2(2p + 2\sqrt2 q + r).
\]
For the last assertion, we need to show that
$N(w)$ is of the form $AWB$ for some symmetric $W$.
This easily follows from the fact that any (nontrivial) lower Christoffel word $w$ is of the form $w = aub$ for some palindrome $u$ and the observation that the matrices $A$ and $B$ themselves are symmetric.
\end{proof}
\begin{theorem}\label{ChristoffelL}
Let $w$ be a lower Christroffel word and let $q(w)$ be the lower left entry of the Cohn matrix $N(w)$. 
Also, let $B(w)$ be the doubly infinite word in $\{ a, b, a^{\vee}\}$ defined in \S\ref{SecPeriodicWords}.
Then 
\[
L( B(w) ) = \sqrt{4 - \frac2{q(w)^2}}.
\]
\end{theorem}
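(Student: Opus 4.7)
The plan is to reduce the computation to a single application of Proposition~\ref{PropLpurelyPeriodic}. By Theorem~\ref{ThmLagrangeNumberChristoffelWord}, we already have $L(B(w)) = L(E(w^*)^*|E(w))$, and this section of $B(w)$, when transferred via the substitution $\Pi$, corresponds to a purely periodic doubly infinite Romik sequence ${}^{\infty}\Pi_w | \Pi_w^{\infty}$ for an explicit digit period $\Pi_w$. So the strategy is to identify $\Pi_w$, compute its period matrix $N = N_{d_1}\cdots N_{d_k}$, and then invoke Proposition~\ref{PropLpurelyPeriodic}.

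The first step is to split according to the parity of $w$. When $w$ is even, property ($\jmath$-III) gives $E(w) = \jmath(w^\infty) = \jmath(w)^\infty$, so the Romik period is exactly $\Pi(\jmath(w))$, and by Proposition~\ref{Prop:Mw} the associated matrix product is $N(w)$ itself. When $w$ is odd, the same property yields $\jmath(w^\infty) = \jmath(w)\,\jmath(w)^{\vee}\,\jmath(w)\,\jmath(w)^{\vee}\cdots$, so the Romik period doubles to $\Pi(\jmath(w))\,\Pi(\jmath(w)^{\vee})$. Combining the two halves of Proposition~\ref{Prop:Mw} with Proposition~\ref{PropNdCheck}, the period matrix in the odd case is $(N(w)J)(JN(w)) = N(w)^2$ using $J^2 = I_2$.

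With the period matrix in hand, I would assemble three ingredients to feed into Proposition~\ref{PropLpurelyPeriodic}: the trace identity $\Tr(N(w)) = 2\sqrt{2}\,q(w)$ from Proposition~\ref{Lemma32Reu}; the determinant $\det N(w) = 1$, which follows from the direct computations $\det A = \det A^{\vee} = \det B = 1$; and Cayley--Hamilton in the form $\Tr(N(w)^2) = \Tr(N(w))^2 - 2\det N(w) = 8\,q(w)^2 - 2$, along with the observation that the lower-left entry of $N(w)^2$ equals $q(w)\,\Tr(N(w)) = 2\sqrt{2}\,q(w)^2$.

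A short algebraic simplification of Proposition~\ref{PropLpurelyPeriodic} then yields
\[
L(B(w)) \;=\; \sqrt{4 - \frac{2}{q(w)^2}}
\]
in both parities, using the factorizations $8q(w)^2 - 4 = 2q(w)^2(4 - 2/q(w)^2)$ in the even case and $(8q(w)^2-2)^2 - 4 = 8q(w)^2\cdot(2\sqrt{2}\,q(w)^2)^2 \cdot (4-2/q(w)^2)/(2q(w)^2)$ (essentially) in the odd case. The main obstacle is the parity-dependent identification of the period matrix; once the collapse $J^2 = I_2$ is noticed, the odd case reduces to squaring $N(w)$ and the formulas in the two cases align automatically.
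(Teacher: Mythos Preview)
Your proof is correct and follows essentially the same route as the paper: the same reduction via Theorem~\ref{ThmLagrangeNumberChristoffelWord}, the same parity split, and the same appeal to Propositions~\ref{PropLpurelyPeriodic}, \ref{Prop:Mw}, \ref{PropNdCheck}, and \ref{Lemma32Reu}. Your odd-case argument is in fact a bit cleaner than the paper's: the paper multiplies out $\left(\begin{smallmatrix} p' & p \\ q' & q \end{smallmatrix}\right)\left(\begin{smallmatrix} q & q' \\ p & p' \end{smallmatrix}\right)$ entry by entry and simplifies, whereas you observe directly that this product is $(N(w)J)(JN(w)) = N(w)^2$ and then invoke Cayley--Hamilton, which gives the trace and lower-left entry immediately.
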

\begin{proof}
As before, we write the Cohn matrix $N(w)$ as
\[
N(w) = 
\begin{pmatrix}
p & p' \\
q & q'
\end{pmatrix}.
\]

When $w$ is even, 
it is easy to see that
$\Pi(E(w))$ 
(recall the notation from \S\ref{SecPeriodicWords})
is a purely periodic sequence with period $\Pi: = \Pi(\jmath(w))$, so that
\[
\Pi(E(w)) = \Pi^{\infty}.
\]
Likewise,
$\Pi(E(w^*))^{\vee}$
is purely periodic with period $\Pi^*$ (the reverse of $\Pi)$ and 
\[
\Pi(E(w^*))^{\vee} = (\Pi^*)^{\infty}.
\]
Therefore, from Theorem~\ref{ThmLagrangeNumberChristoffelWord}, we have
\[
L(B(w)) =
L({}^{\infty}\Pi |\Pi^{\infty}).
\]
To compute this expression, 
we use
Propositions~\ref{PropLpurelyPeriodic},
\ref{Prop:Mw},
and
\ref{Lemma32Reu},
together with the fact $\det(N(w)) = 1$ to obtain
\[
L(B(w))
=
\sqrt{\frac{8q^2 - 4}{2q^2}}
=
\sqrt{4 -\frac2{q^2}}.
\]
This proves the theorem in the even case.

Suppose that $w$ is odd.
Similar calculations show that
$
\Pi(E(w))
$
and
$
\Pi(E(w^*))^{\vee}
$
are purely periodic with periods $\Pi 
=[d_1, \dots, d_k] 
:= \jmath(w)\jmath(w)^{\vee}$
and $\Pi^*$.
So, we will need to compute  
$N_{d_1}\cdots N_{d_k}N_{d_1^{\vee}} \cdots N_{d_k^{\vee}}$.
Apply Propositions~\ref{Prop:Mw} and \ref{PropNdCheck} to get
\[
N_{d_1} \cdots N_{d_k}
=
\begin{pmatrix}
p' & p \\
q' & q
\end{pmatrix}
\quad
\text{and}
\quad
N_{d_1^{\vee}} \cdots N_{d_k^{\vee}}
=
\begin{pmatrix}
q & q' \\
p & p'
\end{pmatrix}.
\]
So,
\[
N_{d_1}\cdots N_{d_k}N_{d_1^{\vee}} \cdots N_{d_k^{\vee}}
=
\begin{pmatrix}
p' & p \\
q' & q
\end{pmatrix}
\begin{pmatrix}
q & q' \\
p & p'
\end{pmatrix}
=
\begin{pmatrix}
p'q + p^2 & p'(p + q') \\
q(p + q') & p'q + (q')^2 
\end{pmatrix}.
\]
Now we use Proposition~\ref{PropLpurelyPeriodic} and Theorem~\ref{ThmLagrangeNumberChristoffelWord} as before to obtain
\[
L(B(w)) =
\sqrt{
\frac{
(2p'q + p^2 + (q')^2)^2 - 4
}{
2q^2(p + q')^2
}}.
\]
We simplify this expression using $\det(N(w)) = pq' - p'q = 1$ and $p + q' = 2\sqrt2 q$ (cf.~Proposition~\ref{Lemma32Reu}):
\begin{align*}
L(B(w)) &=
\sqrt{
\frac{
(2pq' + p^2 + (q')^2 - 2)^2 - 4
}{
2q^2(p + q')^2
}}
\\
&=
\sqrt{
\frac{
(p + q')^4 - 4(p + q')^2 
}{
2q^2(p + q')^2
}}
=
\sqrt{
\frac{
(p + q')^2 - 4
}{
2q^2
}} 
=
\sqrt{4 -\frac2{q^2}}.
\end{align*}
This completes the proof of the theorem in both cases.
\end{proof}

\subsection{The Christoffel tree and the Markoff tree}\label{SubSectionMarkoffTriples}
In this subsection, we review some known results on \emph{the Christoffel tree} and \emph{the Markoff tree}. 
The upshot of this discussion will be Theorem~\ref{ThmTheorem1Reutenauer}, which says that these two have the same tree structure.
This fact will be a key ingredient in our proof of Theorem~\ref{MainTheoremDescDiscretePart}.

Our discussion on the two trees in this subsection should not be regarded as original, as these results are either directly cited from existing literature or they are slight modifications of corresponding statements in the classical case. 
See \cite{Aig13} for a much more general exposition on several isomorphic trees defined in a variety of ways.

We review the construction of the Christoffel tree.
Let $w$ be a nontrivial Christoffel word on $\{a, b\}$.
Then a theorem of Borel and Laubie (see \cite{BL93} and Theorem 3.3 in \cite{BLRS}) says that $w$ admits a factorization $w = uv$ (by which we mean that the concatenation of $u$ and $v$ is equal to $w$) with $u$ and $v$ being Christoffel words themselves and that such a factorization is unique.
The factorization $w = uv$ is called the \emph{standard factorization of} $w$.
By a \emph{Christoffel pair}, we mean an ordered pair $(u, v)$  which gives the standard factorization of a Christoffel word $w$.
We define a \emph{branching rule for the Christoffel tree} as follows. 
For a given Christoffel pair $(u, v)$, we add $(u, uv)$ and $(uv, v)$, and connect them with $(u, v)$.
\begin{center}
\begin{tikzcd}
  & (u, v) \arrow[-, rd] \arrow[-, ld] &  \\
  (u, uv) && (uv, v)\\ 
\end{tikzcd}
\end{center}
The newly added pairs $(u, uv)$ and $(uv, v)$ are also known to be Christoffel pairs. 
Furthermore, if we construct a tree by beginning with $(a, b)$ and applying the branching rule recursively,
we obtain an infinite complete binary tree containing every Christoffel pair exactly once.
This tree is called \emph{the Christoffel tree} (see Figure~\ref{Ctree}).
All these facts are explained in Chapter 3 of \cite{BLRS}. 
\begin{figure}
\begin{center}
\tikzset{triarrow/.pic={
    \draw (-0.1, 0) -- (-0.2, -0.5) node[below]{$\vdots$};
    \draw (0.1, 0) -- (0.2, -0.5) node[below]{$\vdots$};
  }
}
\begin{tikzpicture}[xscale=0.9]

  \node (0) at (0, 4) {$(a, b)$};

  \node (f) at (-2.8, 2.8) {$(a, ab)$};
  \node (g) at (2.8, 2.8) {$(ab, b)$};

\node (f0) at (-4.3, 1.5) {$(a, a^2b)$};
\node (f1) at (-1.5, 1.5)  {$(a^2b, ab)$};

\node (g0) at (1.4, 1.5) {$(ab, ab^2)$};
\node (g1) at (4.5, 1.5) {$(ab^2, b)$};

\node (f01) at (-5.6, -.3) {$(a, a^3b)$};
\pic at (-5.6, -0.5) {triarrow};
\node (f00) at (-3.8, .3) {$(a^3b, a^2b)$};
\pic at (-4, 0.1) {triarrow};

\node (f10) at (-2.4, -.3)  {$(a^2b, a^2bab)$};
\pic at (-2.4, -0.5) {triarrow};
\node (f11) at (-.8, .3)  {$(a^2bab, ab)$};
\pic at (-0.8, 0.1) {triarrow};

\node (g00) at (.8, -.3) {$(ab, abab^2)$};
\pic at (0.8, -0.5) {triarrow};
\node (g01) at (2.4, .3) {$(abab^2, ab^2)$};
\pic at (2.4, 0.1) {triarrow};

\node (g10) at (3.8, -.3) {$(ab^2, ab^3)$};
\pic at (4, -0.5) {triarrow};
\node (g11) at (5.2, .3) {$(ab^3, b)$};
\pic at (5.2, 0.1) {triarrow};

\draw (0) to node[font=\footnotesize]{} (f);
\draw (0) to node[below right, font=\footnotesize]{} (g);

\draw (f) to node[font=\footnotesize]{} (f0); 
\draw (f) to node[right, font=\footnotesize]{} (f1); 

\draw (g) to node[font=\footnotesize]{} (g0); 
\draw (g) to node[right, font=\footnotesize]{} (g1); 

\draw (f0) to node[font=\footnotesize]{} (f00); 
\draw (f0) to node[right, font=\footnotesize]{} (f01); 

\draw (f1) to node[font=\footnotesize]{} (f10); 
\draw (f1) to node[right, font=\footnotesize]{} (f11); 

\draw (g0) to node[font=\footnotesize]{} (g00); 
\draw (g0) to node[right, font=\footnotesize]{} (g01); 

\draw (g1) to node[font=\footnotesize]{} (g10); 
\draw (g1) to node[right, font=\footnotesize]{} (g11); 
\end{tikzpicture}
\end{center}\caption{The Christoffel tree based on the alphabet $\{ a, b \}$. \label{Ctree}}
\end{figure}

Now we move on to the Markoff tree.
Let us call $(x; y_1, y_2)$ 
\emph{a Markoff triple} if they are positive integers satisfying 
$2x^2 + y_1^2 + y_2^2 = 4xy_1y_2$.
The notation $(x; y_1, y_2)$ is meant to distinguish $x$ from $y_1$ and $y_2$, while $(y_1, y_2)$ is considered to be an unordered pair. 
For example, $(x; y_1, y_2) = (x'; y_1', y_2')$ shall mean $x = x'$ and, as unordered pairs, $(y_1, y_2) = (y_1', y_2')$.
The triple $(1;1, 1)$ will be called the \emph{trivial} Markoff triple and all others will be called \emph{nontrivial} ones. 

\begin{proposition}[Chapter 2 of \cite{CF89}]
  Suppose that $(x; y_1, y_2)$ is a Markoff triple with $(x; y_1, y_2) \neq (1; 3, 1), (1; 1, 1)$.
  Then $x, y_1, y_2$ are distinct.
  \label{prop:distinctMarkoff}
\end{proposition}
\begin{proof}
  If $y_1 = y_2$, then $2x^2 + 2y_1^2 = 4xy_1^2$, or $x^2 = (2x-1)y_1^2$. 
  So $y_1^2$ divides $x^2$ and $x = ky_1$ for some $k$.
  Cancelling out $y_1^2$, we have $k^2 = 2ky_1 - 1$.
  Being an integral solution of this quadratic equation, $k$ must be 1. 
  This results in $(x; y_1, y_2) = (1; 1, 1)$.
  
  If $x = y_2$,  a similar calculation gives $y_1^2 = x^2(4y_1 - 3)$.
  So $y_1 = kx$ for some $k$, which gives $k^2 = 4kx - 3$. 
  Likewise, the possible values of $k$ are $k = 1, 3$, which give $(x; y_1, y_2) = (1; 1, 1)$ and $(x; y_1, y_2) = (1; 3, 1)$. 
\end{proof}

For a Markoff triple $(x; y_1, y_2)$ we define
\[
  x' = 2y_1y_2 - x,\quad
  y_1' = 4xy_2 - y_1,\quad
  y_2' = 4xy_1 - y_2.
\]
Then 
$(x'; y_1, y_2)$, 
$(x; y_1', y_2)$, 
$(x; y_1, y_2')$ are easily seen to be Markoff triples and we call them \emph{neighbors} of $(x; y_1, y_2)$.

\begin{proposition}[Chapter 2 of \cite{CF89}]
  Let $(x; y_1, y_2)$ be a nontrivial Markoff triple. 
  Then a maximum element of exactly one of its neighbors is less than $\max(x, y_1, y_2)$ and the remaining two neighbors have greater maximum elements than $\max(x, y_1, y_2)$.
  \label{prop:smaller_max}
\end{proposition}
\begin{proof}
  When $(x; y_1, y_2) = (1; 3, 1)$, its neighbors are $(1; 1, 1)$, $(5; 3, 1)$ and $(1; 3, 11)$, and the statement in the proposition is trivially true.
  So, we will now assume $(x; y_1, y_2) \neq (1; 3, 1)$ and therefore $x, y_1, y_2$ are distinct (see Proposition~\ref{prop:distinctMarkoff}).

  First, we prove the proposition when $x = \max(x, y_1, y_2)$. 
  Define $f(z) = 2z^2 - 4y_1y_2z + y_1^2 + y_2^2$.
  Then $x$ and $x'$ are the two zeros of $f(z)$.
  If $y_1 > y_2$, then
\begin{equation*}
  f(y_1) = 3y_1^2 - 4y_1^2 y_2 + y_2^2 
   < 4y_1^2 - 4y_1^2 y_2 = 4y_1^2 (1 - y_2) 
   \le 0.
\end{equation*}
So $y_1$ is strictly between $x$ and $x'$. 
Because $x =\max(x; y_1, y_2)$ we obtain $ x > y_1 > x'$. 
If $y_2 > y_1$ then a similar argument gives $x>y_2>x'$.
So we conclude $x > \max(y_1, y_2) > x'$. 
In particular, $\max(x'; y_1, y_2)<x$.
Additionally, 
\begin{align*}
  y_1' &= 4xy_2 - y_1 > 4x - x >x, \\
  y_2' &= 4xy_1 - y_2 > 4x - x >x.
\end{align*}
So the maxima of $(x; y_1, y_2')$ and $(x; y_1', y_2)$ are greater than $x = \max(x; y_1, y_2)$.

We move to the case $y_1 = \max(x, y_1, y_2)$. 
(If $y_2=\max(x, y_1, y_2)$, then the proof will be essentially identical, so we omit it.)
In this case, we define $f(z) = z^2 - 4xy_2z + 2x^2 + y_2^2$, whose zeros are easily seen to be $y_1$ and $y_1'$.
Suppose $x > y_2$ first. Then
\begin{equation*}
  f(x) = 3x^2 - 4x^2 y_2 + y_2^2 
   < 4x^2 - 4x^2 y_2 = 4x^2 (1 - y_2) 
   \le 0.
\end{equation*}
As before, this yields $y_1 > x > y_1'$. 
If $y_2 > x$, then 
\[
  f(y_2) = 2y_2^2 - 4x y_2^2 + 2x^2  < 4y_2^2(1 - x) \le 0,
\]
which gives $y_1 > y_2 > y_1'$. 
In conclusion, $y_1 > \max(x, y_2) > y_1'$.
This proves $\max(x; y_1', y_2)<\max(x; y_1, y_2)$. 
Also, 
\begin{align*}
  x' &= 2y_1y_2 - x > 2y_1 - y_1 = y_1, \\
  y_2' &= 4xy_1 - y_2 > 4y_1 - y_1 >y_1.
\end{align*}
Hence the maxima of $(x'; y_1, y_2)$ and $(x; y_1, y_2')$ are greater than $\max(x; y_1, y_2)$.
\end{proof}
To describe our construction of the Markoff tree, we define a \emph{branching rule for Markoff triples} using Proposition~\ref{prop:smaller_max} as follows.
For a given nontrivial Markoff triple $(x; y_1, y_2)$, add the two neighbors whose maxima are greater than $\max(x, y_1, y_2)$, and connect $(x; y_1, y_2)$ with them.
We construct the Markoff tree by beginning with $(1; 3, 1)$ and by applying the branching rule recursively.
See Figure~\ref{Mtree}.

It is not difficult to see that the Markoff tree contains every nontrivial Markoff triple exactly once.
Indeed, for an arbitrary nontrivial Markoff triple $(x; y_1, y_2)$, we can move to the neighbor with a smaller maximum than $\max(x; y_1, y_2)$.
By repeating this process finitely many times, we see that $(x; y_1, y_2)$ is connected to $(1; 3, 1)$ by a sequence of successive neighbors, which characterizes $(x; y_1, y_2)$ uniquely.

\begin{definition}\label{DefMarkoffNumber}
    For a finite word $w$ on $\{a, b \}$, we define the \emph{Markoff number} $m(w)$ of $w$ to be
    \[
    m(w) = 
    \begin{cases}
    \Tr(N(w))/4 & \text{ if $w$ is even}, \\
    \Tr(N(w))/(2\sqrt2)& \text{ if $w$ is odd}.
        \end{cases}
    \]
    Here, $N(w)$ is the Cohn matrix of $w$ (cf.~Definition~\ref{DefCohnMatrix}).
\end{definition}
\begin{lemma}
  Suppose that $u$ and $v$ are finite words on $\{ a, b \}$.
  If both $u$ and $v$ are odd, then
  \[
      m(u^2 v) = 4m(u)m(uv) - m(v), \quad
      m(u v^2) = 4m(uv)m(v) - m(u). 
  \]
  If $u$ is odd and $v$ is even, then
  \[
      m(u^2 v) = 2m(u)m(uv) - m(v), \quad
      m(u v^2) = 4m(uv)m(v) - m(u). 
  \]
  If $u$ is even and $v$ is odd, then
  \[
      m(u^2 v) = 4m(u)m(uv) - m(v), \quad
      m(u v^2) = 2m(uv)m(v) - m(u). 
  \]
  \label{lem:trace_identities}
\end{lemma}
\begin{proof}
  We begin with an identity
  \[
    \Tr(M_1M_2) = \Tr(M_1)\Tr(M_2)-\Tr(M_1^{-1}M_2) =
    \Tr(M_1)\Tr(M_2)-\Tr(M_1M_2^{-1})
  \]
  for any $M_1, M_2\in \mathrm{SL}_2(\mathbb{R})$ (see Lemma 4.2 in \cite{Aig13}).
  This is easily established by direction calculation, so we omit the proof. 
  Using this, we obtain
  \begin{equation}
    \begin{cases}
      \Tr(U^2V) = \Tr(U)\Tr(UV) - \Tr(V), \\
      \Tr(UV^2) = \Tr(UV)\Tr(V) - \Tr(U)
    \end{cases}
    \label{eq:matrix_induction}
  \end{equation}
  for any $U, V \in \mathrm{SL}_2(\mathbb{R})$. 
  All the equalities in the lemma now follow from \eqref{eq:matrix_induction} and Definition~\ref{DefMarkoffNumber}.
\end{proof}
\begin{proposition}
  For any finite words $w_1$ and $w_2$ on $\{ a, b\}$, not both even, we have
  \[
    m(w_1w_2) > \max(m(w_1), m(w_2)).
  \]
  \label{prop:max_Markoff_number}
\end{proposition}
\begin{proof}
  Let us write 
  \[
    N(w_1) =
    \begin{pmatrix}
      a_1 & b_1 \\
      c_1 & d_1 \\
    \end{pmatrix}
    \quad
    \text{and}
    \quad
    N(w_2) =
    \begin{pmatrix}
      a_2 & b_2 \\
      c_2 & d_2 \\
    \end{pmatrix}.
  \]
  We note from Corollary~\ref{CorCohnForm} that $a_j, b_j, c_j, d_j \ge1$ for $j = 1, 2$ and, if $w_j$ is odd, $a_j, d_j\ge\sqrt2$.
  So,
  \begin{equation}
     \Tr(N(w_1)N(w_2)) 
     = (a_1a_2 + d_1d_2) + (b_1c_2 + c_1b_2) 
    > \min(a_2, d_2) \Tr(N(w_1)).
    \label{eq:Tr_bound1}
  \end{equation}
  Reversing the role of $N(w_1)$ and $N(w_2)$, we have
  \begin{equation}
     \Tr(N(w_1)N(w_2)) 
    > \min(a_1, d_1) \Tr(N(w_2)).
    \label{eq:Tr_bound2}
  \end{equation}
  To prove the statement in the proposition, we first consider the case when $w_1w_2$ is odd.
  If $w_1$ is odd and $w_2$ is even, we use \eqref{eq:Tr_bound1} to obtain
\begin{align*}
    m(w_1w_2) = 
    \frac{\Tr(N(w_1w_2))}{2\sqrt2} 
    &> \min(a_2, d_2) \frac{\Tr(N(w_1))}{2\sqrt2} \\
    &\ge
    \frac{\Tr(N(w_1))}{2\sqrt2}
    =m(w_1)
\end{align*}
  and from \eqref{eq:Tr_bound2} 
\begin{align*}
    m(w_1w_2) = 
    \frac{\Tr(N(w_1w_2))}{2\sqrt2}  
    &> \sqrt 2  \min(a_1, d_1) \frac{\Tr(N(w_2))}{4} \\
    &> \frac{\Tr(N(w_2))}{4}
    =m(w_2).
\end{align*}
  The case for $w_1$ even and $w_2$ odd is similar and we omit it. 
  Next, suppose $w_1w_2$ is even, which necessarily implies that $w_1$ and $w_2$ are odd.
  Because $\min(a_2, d_2) \ge \sqrt2$, we obtain from \eqref{eq:Tr_bound1} 
  \[
    m(w_1w_2) = 
    \frac{\Tr(N(w))}{4} > 
    \frac{1}{\sqrt2}
    \min(a_2, d_2) \frac{\Tr(N(w_1))}{2\sqrt2}\ge 
    m(w_{1}).
  \]
  The same argument produces $m(w_1w_2) > m(w_2)$ as well.
\end{proof}

Note that, for any Christoffel pair $(u, v)$, exactly one in $\{ uv, u, v\}$ is an even word and the remaining two are odd words.
Indeed, such a property is trivially true for $(a, b)$ and, if $(u, v)$ has this property, then the two new pairs $(u, uv)$ and $(uv, v)$ created by the branching rule also satisfy this property.
By writing $(m(uv), m(u), m(v))$ in the following theorem, we shall mean $(m(z); m(z_1), m(z_2))$ where $z$ is the even word in $\{ uv, u, v \}$ and $z_1$ and $z_2$ are the two odd words.
\begin{theorem}\label{ThmTheorem1Reutenauer}
  The Christoffel tree and the Markoff tree are isomorphic as graphs, which means that there is a bijection between the set of all Christoffel pairs and the set of all nontrivial Markoff numbers and adjacent Christoffel pairs are mapped to neighbors under this bijection.
  This graph isomorphism is given by
  \[
    (u, v) \mapsto (m(uv), m(u), m(v)).
  \]
\end{theorem}
\begin{proof}
  We note from Definition~\ref{DefCohnMatrix} that
  \[
    A =
    \begin{pmatrix}
      3 & \sqrt 2 \\ \sqrt2 & 1
    \end{pmatrix},
    \quad
    B =
    \begin{pmatrix}
      \sqrt 2 & 1\\ 1& \sqrt2 
    \end{pmatrix},
    \quad
    AB =
    \begin{pmatrix}
      4\sqrt2 & 5 \\ 3 & 2\sqrt2
    \end{pmatrix},
  \]
  so that $(m(ab), m(a), m(b)) = (1; 3, 1)$, which is the root of the Markoff tree.

  Next, we let $(u, v)$ be a Christoffel pair and assume that $(m(uv), m(u), m(v))$ is a nontrivial Markoff triple.
  Because of the branching rule for the Christoffel tree, it suffices to show that
  $(m(u^2v), m(u), m(uv))$ and $(m(uv^2), m(uv), m(v))$ are the two neighbors of
  $(m(uv), m(u), m(v))$ whose maxima are greater than 
  $\max(m(uv), m(u), m(v))$.

  Consider the case when $uv$ is even. 
  This necessarily implies that both $u$ and $v$ are odd. 
  Writing $x = m(uv)$, $y_1 = m(u)$, $y_2 = m(v)$, we know from Proposition~\ref{prop:max_Markoff_number} that $x = \max(x, y_1, y_2)$.
  Further, Lemma~\ref{lem:trace_identities} gives $m(u^2v) = 4xy_1 - y_2 = y_2'$ and $m(uv^2) = 4xy_2 -y_1 = y_1'$.
  This shows that 
  \begin{equation*}
    \begin{cases}
      (m(u^2v), m(u), m(uv)) = (x; y_1, y_2') \\
      (m(uv^2), m(uv), m(v)) = (x; y_1', y_2)
    \end{cases}
  \end{equation*}
  showing that they are 
  indeed the neighbors of $(x; y_1, y_2)$, whose maxima are greater than $x$. (cf.~Proposition~\ref{prop:max_Markoff_number})
  
  Now assume that $uv$ is odd.
  Then we have either (i) $u$ is odd and $v$ is even, or (ii) $u$ is even and $v$ is odd.
  For (i), we let $x = m(v)$, $y_1 = m(uv)$, $y_2 = m(u)$. 
  Then Propoisition~\ref{prop:max_Markoff_number} gives $y_1 = \max(x, y_1, y_2)$. Also $m(u^2v) = 2y_1y_2 - x = x'$ and $m(uv^2) = 4xy_1 - y_2 = y_2'$, so that
  \begin{equation}
    \begin{cases}
      (m(u^2v), m(u), m(uv)) = (x'; y_1, y_2) \\
      (m(uv^2), m(uv), m(v)) = (x; y_1, y_2').
    \end{cases}
    \label{eq:the_same_equation}
  \end{equation}
  For (ii), we let $x = m(u)$, $y_1 = m(uv)$, $y_2 = m(v)$ and we have $y_1 = \max(x, y_1, y_2)$ again.
  A similar calculation shows that \eqref{eq:the_same_equation} holds in this case as well.
  So for both (i) and (ii), we see that
  $(m(u^2v), m(u), m(uv))$ and
  $(m(uv^2), m(uv), m(v))$ are the desired neighbors of $(x; y_1, y_2)$.
  This completes the proof of the theorem.
\end{proof}

Recall from \eqref{DefMx} and \eqref{DefMy} of \S\ref{SecIntro} that we defined the two sets
\[
    \mathcal{M}_x = \{ x \mid
(x; y_1, y_2) \text{ is a Markoff triple}
\} 
=\{ 1, 5, 11, 29, 65, 349, \dots\}
\]
and
\begin{align*}
\mathcal{M}_y &= \{ \max\{y_1, y_2\} \mid (x; y_1, y_2) \text{ is a Markoff triple }
\}\\
&=\{ 1, 3, 11, 17, 41, 59, \dots\}. \notag
\end{align*}
We are now ready to complete the proof of our main theorem.
\begin{proof}[Proof of Theorem~\ref{MainTheoremDescDiscretePart}]
Let $P \in \QQQ$ be such that $L(P) < 2$.
Choose a doubly infinite Romik sequence $T$ with $L(T) = L(P)$ (cf.~Proposition~\ref{PropBombieriTrick}),
which then gives a strongly admissible doubly infinite word in $\{ a, b, a^{\vee} \}$.
We conclude from Theorem~\ref{BombieriThm15} that either $B$ or $B^{\vee}$ is equal to
\begin{enumerate}[font=\upshape, label=(\roman*)]
	\item $\cdots bbb \cdots$,
	\item $\cdots aaa \cdots$, 
	\item $B(w)$ for a nontrivial lower Christoffel word $w$.
\end{enumerate}
For the case (i), we have $T = \cdots 222 \cdots$.
Use Proposition~\ref{PropLpurelyPeriodic} with $N_2 = \left( \begin{smallmatrix}
1 & \sqrt2 \\ \sqrt2 & 1
\end{smallmatrix}
\right)$:
\[
L(P) = L(T) =
L( \cdots 2|2 \cdots ) = \frac{
\sqrt{
2^2 - 4(-1)
}
}{
\sqrt2 \sqrt2
}
=\sqrt2
=
\sqrt{
4 - \frac2{1^2}
}.
\]
Similarly, for the case (ii), we have $T = \cdots 3131 \cdots $.
Again, using Proposition~\ref{PropLpurelyPeriodic} with $N_3N_1 = 
\left(
\begin{smallmatrix}
3 & \sqrt2 \\ \sqrt2 & 1 
\end{smallmatrix}
\right)
$, we have
\[
L(P) = L(T) =
L( \cdots 31|31 \cdots ) = \frac{
\sqrt{
4^2 - 4
}
}{
\sqrt2 \sqrt2
}
=\sqrt3
=
\sqrt{
4 - \frac1{1^2}
}.
\]
(Proposition~\ref{PropLpurelyPeriodic} easily shows
$ 
L( \cdots 31|31 \cdots )
>  L( \cdots 13|13 \cdots )$, 
thus $L(T)$ is equal to
$
L( \cdots 31|31 \cdots )
$,
not
$
L( \cdots 13|13 \cdots )
$.)

It remains to consider the case (iii), when $B$ or $B^{\vee}$ is equal to $B(w)$ for a nontrivial Chritoffel word $w$. 
In this case, we have already calculated $L(B(w))$ in Theorem~\ref{ChristoffelL}:
\[
L(B(w)) = 
\sqrt{
4 - \frac2{q(w)^2}
}.
\]
However, when $w$ is a lower Christoffel word, we see from Proposition~\ref{Lemma32Reu} and Corollary~\ref{CorCohnForm} that $m(w)$ is a positive integer and that
\[
q(w) = 
\begin{cases}
\sqrt2 m(w)& \text{ if $w$ is even}, \\
m(w)    & \text{ if $w$ is odd}. \\
\end{cases}
\]
Therefore, we have
\[
L(P) = L(B(w)) = 
\begin{cases}
\sqrt{4 - (1/m(w)^2)}
& \text{ if $w$ is even}, \\
\sqrt{4 - (2/m(w)^2)}
& \text{ if $w$ is odd}. \\
\end{cases}
\]
Now, we let $(u, v)$ be the Christoffel pair giving the standard factorization of $w = uv$ and use Theorem~\ref{ThmTheorem1Reutenauer} to get $(m(w), m(u), m(v)) = (x; y_1, y_2)$. Then we conclude from this that $L(P) = \sqrt{4-2/x^2}$ 
or 
$L(P) = \sqrt{4-1/y_j^2}$ with $y_j = \max(y_1, y_2)$. 

Conversely, for any $x\in \mathcal{M}_x$ or $y\in \mathcal{M}_y$, we aim to find $B$ such that $L(B) = \sqrt{4-2/x^2}$ or $\sqrt{4-1/y^2}$. When $x = 1$ or $y = 1$, we've already shown that $B = \cdots aaa \cdots$ or $\cdots bbb \cdots$ gives the desired result.
Assume now that $x>1$ and let $(x; y_1, y_2)$ be a nontrivial Markoff triple containing $x$. By moving to a neighbor, if necessary, we may assume that $x = \max(x; y_1, y_2)$. By Theorem~\ref{ThmTheorem1Reutenauer}, there exists a Christoffel pair $(u, v)$ with $(m(uv), m(u), m(v)) = (x; y_1, y_2)$. In particular, $m(uv) = x$, thus $L(B(uv)) = \sqrt{4-2/x^2}$. Likewise, when $y >1$, there is a nontrivial Markoff triple $(x; y_1, y_2)$ with $y = \max(x, y_1, y_2)$. Then, by Theorem~\ref{ThmTheorem1Reutenauer} again, we have $(m(uv), m(u), m(v)) = (x; y_1, y_2)$ for a Christoffel pair $(u, v)$ and we obtain $L(B(uv)) = \sqrt{4-1/y^2}$.
This finishes the proof of Theorem~\ref{MainTheoremDescDiscretePart}.
\end{proof}

\begin{bibdiv}
\begin{biblist}

\bib{AA13}{article}{
  author={Abe, Ryuji},
  author={Aitchison, Iain R.},
  title={Geometry and Markoff's spectrum for $\mathbb {Q}(i)$, I},
  journal={Trans. Amer. Math. Soc.},
  volume={365},
  date={2013},
  number={11},
  pages={6065--6102},
  issn={0002-9947},
  review={\MR {3091276}},
  doi={10.1090/S0002-9947-2013-05850-3},
}

\bib{AAR16}{article}{
  author={Abe, Ryuji},
  author={Aitchison, Iain R.},
  author={Rittaud, Beno\^\i t},
  title={Two-color Markoff graph and minimal forms},
  journal={Int. J. Number Theory},
  volume={12},
  date={2016},
  number={4},
  pages={1093--1122},
  issn={1793-0421},
  review={\MR {3484300}},
  doi={10.1142/S1793042116500676},
}

\bib{AR17}{article}{
   author={Abe, Ryuji},
   author={Rittaud, Beno\^{i}t},
   title={On palindromes with three or four letters associated to the
   Markoff spectrum},
   journal={Discrete Math.},
   volume={340},
   date={2017},
   number={12},
   pages={3032--3043},
   issn={0012-365X},
   review={\MR{3698093}},
   doi={10.1016/j.disc.2017.07.010},
}

\bib{Aig13}{book}{
   author={Aigner, Martin},
   title={Markov's theorem and 100 years of the uniqueness conjecture},
   note={A mathematical journey from irrational numbers to perfect
   matchings},
   publisher={Springer, Cham},
   date={2013},
   pages={x+257},
   isbn={978-3-319-00887-5},
   isbn={978-3-319-00888-2},
   review={\MR{3098784}},
   doi={10.1007/978-3-319-00888-2},
}
\bib{Bar63}{article}{
   author={Barning, F. J. M.},
   title={On Pythagorean and quasi-Pythagorean triangles and a generation
   process with the help of unimodular matrices},
   language={Dutch},
   journal={Math. Centrum Amsterdam Afd. Zuivere Wisk.},
   volume={1963},
   date={1963},
   number={ZW-011},
   pages={37},
   review={\MR{0190077}},
}
\bib{Ber34}{article}{
author={Berggren, B.},
title={Pytagoreiska triangular},
journal={Tidskrift f\"or element\"ar matematik, fysik och kemi},
volume={17},
date={1934},
pages={129--139},
}

\bib{BLRS}{book}{
   author={Berstel, Jean},
   author={Lauve, Aaron},
   author={Reutenauer, Christophe},
   author={Saliola, Franco V.},
   title={Combinatorics on words},
   series={CRM Monograph Series},
   volume={27},
   note={Christoffel words and repetitions in words},
   publisher={American Mathematical Society, Providence, RI},
   date={2009},
   pages={xii+147},
   isbn={978-0-8218-4480-9},
   review={\MR{2464862}},
}

\bib{BM18}{article}{
   author={Boca, Florin P.},
   author={Merriman, Claire},
   title={Coding of geodesics on some modular surfaces and applications to
   odd and even continued fractions},
   journal={Indag. Math. (N.S.)},
   volume={29},
   date={2018},
   number={5},
   pages={1214--1234},
   issn={0019-3577},
   review={\MR{3853422}},
   doi={10.1016/j.indag.2018.05.004},
}

\bib{Bom07}{article}{
   author={Bombieri, Enrico},
   title={Continued fractions and the Markoff tree},
   journal={Expo. Math.},
   volume={25},
   date={2007},
   number={3},
   pages={187--213},
   issn={0723-0869},
   review={\MR{2345177}},
   doi={10.1016/j.exmath.2006.10.002},
}
\bib{BL93}{article}{
   author={Borel, J.-P.},
   author={Laubie, F.},
   title={Quelques mots sur la droite projective r\'{e}elle},
   language={French},
   journal={J. Th\'{e}or. Nombres Bordeaux},
   volume={5},
   date={1993},
   number={1},
   pages={23--51},
   issn={1246-7405},
   review={\MR{1251226}},
}

\bib{CCGW}{article}{
   author={Cha, Byungchul},
   author={Chapman, Heather},
   author={Gelb, Brittany},
   author={Weiss, Chooka},
   title={Lagrange spectrum of a circle over the Eisensteinian field},
   date={2020},
   eprint={arXiv:2003.13188 [math.NT]},
}

\bib{CK18}{article}{
   author={Cha, Byungchul},
   author={Kim, Dong Han},
   title={Number theoretical properties of Romik's dynamical system},
   journal={Bull. Korean Math. Soc.},
   date={2020},
   volume={57},
   pages={251--274},
   doi={/10.4134/BKMS.b190163},
}

\bib{CNT}{article}{
   author={Cha, Byungchul},
   author={Nguyen, Emily},
   author={Tauber, Brandon},
   title={Quadratic forms and their Berggren trees},
   journal={J. Number Theory},
   volume={185},
   date={2018},
   pages={218--256},
   issn={0022-314X},
   review={\MR{3734349}},
   doi={10.1016/j.jnt.2017.09.003},
}

\bib{Coh55}{article}{
  author={Cohn, Harvey},
  title={Approach to Markoff's minimal forms through modular functions},
  journal={Ann. of Math. (2)},
  volume={61},
  date={1955},
  pages={1--12},
  issn={0003-486X},
  review={\MR {0067935}},
  doi={10.2307/1969618},
}

\bib{CF89}{book}{
   author={Cusick, Thomas W.},
   author={Flahive, Mary E.},
   title={The Markoff and Lagrange spectra},
   series={Mathematical Surveys and Monographs},
   volume={30},
   publisher={American Mathematical Society, Providence, RI},
   date={1989},
   pages={x+97},
   isbn={0-8218-1531-8},
   review={\MR{1010419}},
   doi={10.1090/surv/030},
}

\bib{FKMS}{article}{
   author={Fishman, Lior},
   author={Kleinbock, Dmitry},
   author={Merrill, Keith},
   author={Simmons, David},
   title={Intrinsic Diophantine approximation on manifolds: general theory},
   journal={Trans. Amer. Math. Soc.},
   volume={370},
   date={2018},
   number={1},
   pages={577--599},
   issn={0002-9947},
   review={\MR{3717990}},
   doi={10.1090/tran/6971},
}

\bib{KLL20}{article}{
    author={Kim, Dong Han},
    author={Lee, Seul Bee},
    author={Liao, Lingmin},
    title={Odd-odd continued fraction algorithm},
    date={2020},
    eprint={arXiv:2002.06310 [math.DS]},
}

\bib{KM15}{article}{
   author={Kleinbock, Dmitry},
   author={Merrill, Keith},
   title={Rational approximation on spheres},
   journal={Israel J. Math.},
   volume={209},
   date={2015},
   number={1},
   pages={293--322},
   issn={0021-2172},
   review={\MR{3430242}},
   doi={10.1007/s11856-015-1219-z},
}

\bib{Kop80}{article}{
   author={Kopetzky, Hans G\"{u}nther},
   title={Rationale Approximationen am Einheitskreis},
   language={German, with English summary},
   journal={Monatsh. Math.},
   volume={89},
   date={1980},
   number={4},
   pages={293--300},
   issn={0026-9255},
   review={\MR{587047}},
   doi={10.1007/BF01659493},
}

\bib{Kop85}{article}{
   author={Kopetzky, Hans G\"{u}nther},
   title={\"{U}ber das Approximationsspektrum des Einheitskreises},
   language={German, with English summary},
   journal={Monatsh. Math.},
   volume={100},
   date={1985},
   number={3},
   pages={211--213},
   issn={0026-9255},
   review={\MR{812612}},
   doi={10.1007/BF01299268},
}

\bib{Mal77}{article}{
  author={Maly\v sev, A. V.},
  title={Markov and Lagrange spectra (a survey of the literature)},
  language={Russian},
  note={Studies in number theory (LOMI), 4},
  journal={Zap. Nau\v cn. Sem. Leningrad. Otdel. Mat. Inst. Steklov. (LOMI)},
  volume={67},
  date={1977},
  pages={5--38, 225},
  review={\MR {0441876}},
}

\bib{Mar79}{article}{
  author={Markoff, A.},
  title={Sur les formes quadratiques binaires ind\'efinies},
  language={French},
  journal={Math. Ann.},
  volume={15},
  date={1879},
  pages={381--409},
  issn={0025-5831},
}

\bib{Mar80}{article}{
  author={Markoff, A.},
  title={Sur les formes quadratiques binaires ind\'efinies. II},
  language={French},
  journal={Math. Ann.},
  volume={17},
  date={1880},
  number={3},
  pages={379--399},
  issn={0025-5831},
  review={\MR {1510073}},
  doi={10.1007/BF01446234},
}

\bib{Mos16}{article}{
   author={Moshchevitin, Nikolay},
   title={\"Uber die rationalen Punkte auf der Sph\"are},
   language={German, with German summary},
   journal={Monatsh. Math.},
   volume={179},
   date={2016},
   number={1},
   pages={105--112},
   issn={0026-9255},
   review={\MR{3439274}},
   doi={10.1007/s00605-015-0818-4},
}

\bib{Pan19}{article}{
    author={Panti, Giovanni},
    title={Billiards on pythagorean triples and their Minkowski functions},
    journal = {Discrete \& Continuous Dynamical Systems---A},
volume = {40},
number = {7},
pages = {4341--4378},
date = {2020},
doi = {10.3934/dcds.2020183},
}

\bib{Reu06}{article}{
   author={Reutenauer, Christophe},
   title={Mots de Lyndon g\'{e}n\'{e}ralis\'{e}s},
   language={French, with English summary},
   journal={S\'{e}m. Lothar. Combin.},
   volume={54},
   date={2005/07},
   pages={Art. B54h, 16},
   issn={1286-4889},
   review={\MR{2223030}},
}

\bib{Reu09}{article}{
   author={Reutenauer, Christophe},
   title={Christoffel words and Markoff triples},
   journal={Integers},
   volume={9},
   date={2009},
   pages={A26, 327--332},
   issn={1553-1732},
   review={\MR{2534916}},
   doi={10.1515/INTEG.2009.027},
}

\bib{Rom08}{article}{
   author={Romik, Dan},
   title={The dynamics of Pythagorean triples},
   journal={Trans. Amer. Math. Soc.},
   volume={360},
   date={2008},
   number={11},
   pages={6045--6064},
   issn={0002-9947},
   review={\MR{2425702 (2009i:37101)}},
   doi={10.1090/S0002-9947-08-04467-X},
}

 \bib{Sch75a}{article}{
   author={Schmidt, Asmus L.},
   title={Diophantine approximation of complex numbers},
   journal={Acta Math.},
   volume={134},
   date={1975},
   pages={1--85},
   issn={0001-5962},
   review={\MR {0422168}},
   doi={10.1007/BF02392098},
 }
\bib{Sch75b}{article}{
   author={Schmidt, Asmus L.},
   title={On $C$-minimal forms},
   journal={Math. Ann.},
   volume={215},
   date={1975},
   pages={203--214},
   issn={0025-5831},
   review={\MR{376530}},
   doi={10.1007/BF01343890},
}

\bib{Ser85a}{article}{
  author={Series, Caroline},
  title={The modular surface and continued fractions},
  journal={J. London Math. Soc. (2)},
  volume={31},
  date={1985},
  number={1},
  pages={69--80},
  issn={0024-6107},
  review={\MR {810563}},
  doi={10.1112/jlms/s2-31.1.69},
}

\bib{SW16}{article}{
   author={Short, Ian},
   author={Walker, Mairi},
   title={Even-integer continued fractions and the Farey tree},
   conference={
      title={Symmetries in graphs, maps, and polytopes},
   },
   book={
      series={Springer Proc. Math. Stat.},
      volume={159},
      publisher={Springer, [Cham]},
   },
   date={2016},
   pages={287--300},
   review={\MR{3516227}},
}

\end{biblist}
\end{bibdiv} 
\end{document}